\newtheorem{lemma}{Lemma}[section]
\newtheorem{proposition}{Proposition}[section]
\newtheorem{corollary}{Corollary}[section]
\newtheorem{remark}{Remark}[section]
\newtheorem{claim}{Claim}[section]
\newtheorem{theorem}{Theorem}[section]
\newtheorem{definition}{\emph{Definition}}[section]
\newcommand{\pr}{\mathbb{P}}
\numberwithin{equation}{section}
\title{Metastability for the contact process with two types of particles and priorities}
\author{Mariela Pent\'on Machado}
\date{ }
\begin{document}
\maketitle
\begin{abstract}
We consider a symmetric finite-range contact process on $\mathbb{Z}$ with two types of particles (or infections), which propagate according to
the same supercritical rate and die (or heal) at rate 1. Particles of type $1$ can occupy any site in $(-\infty,0]$ that is empty or occupied by a particle of type $2$  and, analogously, particles of type $2$ can occupy any site in $[1,+\infty)$ that is empty or occupied by a particle of type $1$. We consider the model restricted to a finite interval $[-N+1,N]\cap \mathbb {Z}$. If the initial configuration  is $\mathds{1}_{(-N,0]}+2\mathds{1}_{[1,N)}$, we prove that this system exhibits two metastable states: one with the two species and the other 
one with the family that survives the competition.

\emph{MSC 2010:} 60K35, 82B43.

\emph{Keywords:} Contact process, percolation.
\end{abstract}

\section{Introduction}
The aim of this work is the study of a metastable phenomenon for a stochastic process that can be interpreted as the time evolution of a population which has two different species and each of them has a favorable region in the environment.

A system is considered in a metastable state if it behaves as in a \emph{false equilibrium} distribution for a long random time until, abruptly, it gets to the true equilibrium.  Classical examples of this phenomenon include the behavior of  supercooled vapors and liquids, and supersaturated vapors and solutions. For a detailed discussion on metastability in stochastic processes and references, see the monographs \cite{Bovier} and \cite{olivieri2005large}.

A specific  stochastic process that fits into this situation is the  contact process, introduced by Harris in \cite{Harris}. It is a simple model for the spread of an infection, where individuals are identified with the vertices of a given graph which we may take as $\mathbb{Z}^d$. Every infected individual can propagate the infection to some neighbor at rate $\lambda$ and it becomes healthy at rate $1$.  This process presents a dynamical phase transition: there exists a critical value $\lambda_c$ for the infection rate such that if $\lambda$ is larger than $\lambda_c$, there is a non-trivial invariant measure $\mu$ different from $\delta_{\emptyset}$.  On the other hand,  when restricted to a finite volume, this  is a finite Markov chain and $\delta_{\emptyset}$ is the only equilibrium state. Nevertheless, for suitable initial conditions, the restriction of the non-trivial invariant measure to this finite volume behaves as a metastable state as described above. This was first proved in \cite{Casandro-Vares-Oliveri} for $\lambda$ sufficiently large and in the one-dimensional case, where the authors introduced a pathwise point of view for the study  of metastability in stochastic dynamics.  The basic idea of this approach  is to study the statistics of each path, performing time
averages along the evolution. This study includes basically two steps, first, it is proved that the time of extinction rescaled by its mean converges to an exponential distribution with mean $1$.  Secondly, they prove the convergence of suitably rescaled time averages along the evolution to a non-equilibrium distribution. This last  convergence is named thermalization property, and is clearly connected to the unpredictability of the transition out of the 'metastable state'. In \cite{Schonmman} this result was  extended to the whole supercritical region.  A different proof of the convergence of the time of extinction rescaled by its mean was proved in \cite{Schonmman-Durrett}, which also describes the asymptotic behavior of the logarithm of this time. These last results were extended for dimension $d\geq 2$ in \cite{Mountford} and \cite{Mountford2}, respectively. The thermalization property for the contact process in dimension $d\geq 2$ was proved in \cite{Simonis}.

 The contact process can be interpreted as the  time evolution of a certain population, where a site is now ``occupied" (in correspondence to ``infected") or ``empty"(in correspondence to``healthy"). We shall examine the one-dimensional situation but allow a propagation within distance $R>1$. There are some examples of processes inspired by the contact process that try to describe what happens if the population is not homogeneous, in the sense that some individuals have different characteristics. An  example is the process introduced in \cite{GBT} in which  every site in $\mathbb{Z}$ can be occupied by particles of type 1 or 2, but the particles of type 1 have priority throughout the environment.  We introduce a process in which the priority is no longer spatially homogeneous; particles of type 1 have priority in $(-\infty,0]\cap \mathbb{Z}$ and  particles of type $2$ in $[1,\infty)\cap \mathbb{Z}$. The process we are interested in is a continuous time Markov process with state space $\{0,1,2\}^{\mathbb{Z}}$ and we denote it by $\{\zeta_t\}_t$. If $\zeta_t(x)=i$, then the site $x$ is occupied at time $t$ by a particle of type $i$ (i=1,2) and if $\zeta_t(x)=0$ at time $t$, the site $x$ is empty. 
  We denote  the flip rates at  $x$ in a configuration $\zeta \in \{0,1,2\}^{\mathbb{Z}}$ by $c(x,\zeta, \cdot)$ and are defined as follows
$$ \begin{array}{ll}
 c(x, \zeta, 1\rightarrow 0 )= c(x, \zeta,2\rightarrow 0)= 1,\\
 c(x, \zeta, 0 \rightarrow i)=   \lambda \underset{y: \hspace{0.1cm} 0<|x-y|\leq R }{\sum} \mathds{1}_{\zeta(y)=i} , i= 1,2,\\
  c(x, \zeta, 2 \rightarrow 1)=   \lambda \underset{y: \hspace{0.1cm}  0<|x-y|\leq R }{\sum} \mathds{1}_{\zeta(y)=1}\mathds{1}_{\{x \in(-\infty,0]\}} ,\\
  c(x, \zeta, 1 \rightarrow 2)=   \lambda  \underset{y: \hspace{0.1cm}   0<|x-y|\leq R }{\sum} \mathds{1}_{\zeta(y)=2}\mathds{1}_{\{x \in [1,\infty)\}}.
\end{array}
$$
 We consider $R>1$ and restrict to the supercritical case,  where $\lambda>\lambda_{c}=\lambda_{c}(R)$. For most of the paper, we consider the initial configuration $\mathds{1}_{(-\infty,0]}+2\mathds{1}_{[1,\infty)}$.

In this paper, we prove that if the dynamic is restricted to an interval of length $N$, the time of the first extinction for one of the two populations, when properly rescaled, converges to the  exponential distribution as  $N$ tends to infinity.  We also prove a result that gives information on the asymptotic order of magnitude of this time (for the limit in $N$). Combining this result with the metastability of the classical contact process, we obtain that, after one of the species dies out, the surviving species lives during an exponential time. Since with only one type of particle the process behaves like the classical contact process, after one of the species dies out the process presents a new metastable state, which is the standard for the classical contact process.  

The paper is organized as follows. In Section \ref{General Setting}, we introduce the notation and state our main results. In Section \ref{Section3}, we define \emph{barriers} in a finite interval; this is a central tool in the development of the next sections. In Section \ref{Section4}, we present a result about the metastability for the classical contact process in dimension $1$ with range $R\geq1$. In Section \ref{Metastability}, we prove that the time of the first extinction in the interval converges  to an exponential distribution as the length of the interval  tends to infinity. In Section \ref{Section6}, we prove the convergence in probability of the logarithm of this time divided by the length of the interval to a positive constant.  

\section{Settings}\label{General Setting}
 In this section, we recall the Harris construction introduced in \cite{Harris}. Using this construction, we define the classical contact process. Also, using the Harris construction, we give another definition of the contact process with two types of particles and priorities restricted to the interval $[-N+1,N]$\footnote{We observe that in the introduction we gave a different definition of the contact process with two types of particles and priorities by defining  the rates of flips of the process.}. This definition provides a precise coupling between the classical contact process and the contact process with two types of particles and priorities (see Remark \ref{rem:1}).

In order to define the classical contact process with range $R\in \mathbb{N}$ and rate of infection $\lambda>0$, we consider a collection of independent Poisson point processes on $[0,\infty)$
\begin{align}\label{HarrisGraph}
\begin{split}
&\{P^{x}\}_{x \in \mathbb{Z}}\text{ with rate 1},\\&
\{P^{x\rightarrow y}\}_{\{x,y \in \mathbb{Z}: \hspace{0.2cm} 0<|x-y|\leq R\}} \text{ with rate }\lambda.
\end{split}
\end{align}

Graphically, we  place a cross mark at the point $(x,t)\in \mathbb{Z}\times [0, +\infty)$
whenever $t$ belongs to the Poisson process $P^x$. In addition, we place an arrow following the direction from  $x$ to $y$  whenever $t$ belongs to the Poisson process $P^{x\rightarrow y}$. We  denote by $\mathcal{H}$  the collection of these marks in $\mathbb{Z}\times[0,\infty)$, this is a \emph{Harris construction} (see Figure \ref{HarrisConstrution}). Given $(x,t)\in \mathbb{Z}\times[0,\infty)$, we denote by $\Theta_{(x,t)}(\mathcal{H})$ the Harris construction obtained by shifting $\mathcal{H}$ such that $(x,t)$ is the new origin.
 \begin{figure}[h]
	\centering
		  \includegraphics[scale=0.4,unit=1mm]{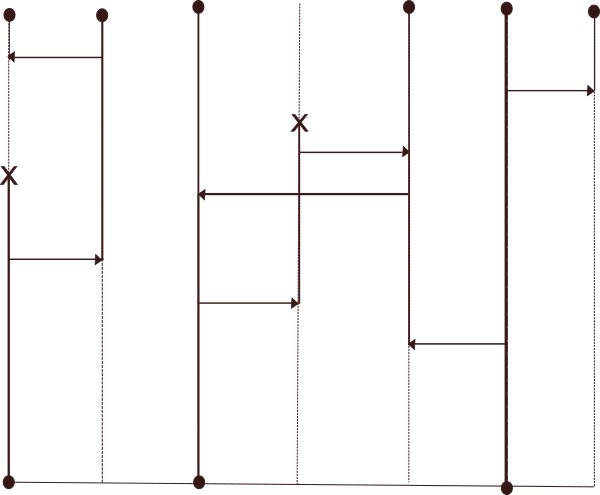}
   \caption{An example of a Harris construction for $R>1$.}\label{HarrisConstrution}
   \end{figure}

A \emph{path} in $\mathcal{H}$ is an oriented path which follows the  positive direction of time $t$, it passes along the arrows in the direction of them and does not pass through any cross mark. More precisely,  a path from $(x,s)$ to $(y,t)$, with $0<s<t$, is a piecewise constant function $\gamma:[s,t]\rightarrow \mathbb{Z}$ such that:
\begin{itemize}
 \item{$ \gamma(s)=x, \gamma(t)=y$},
 \item{$\gamma(r)\neq \gamma(r-)$ only if  $r \in P^{\gamma(r-)\rightarrow \gamma(r)}$}\footnote{The notation $r\in P^{x\rightarrow y}$ means that $r\in (0,\infty)$ is a jump time of the Poisson process $P^{x\rightarrow y}$. },
 \item{$\forall r \in [s,t], r \notin  P^{\gamma(r)}$.}
\end{itemize}
In this case, we say that $\gamma$ connects $(x,s)$ with $(y,t)$. Moreover, if such a path exists, we write $(x,s)\rightarrow(y,t)$.

 For  $A$, $B$  and  $C$ subsets of $\mathbb{Z}$ and $0\leq s<t$, we say that $A \times \{s \}$ is connected with $B\times\{t \} $ inside $C$, if there exist $x\in A$, $y \in B$ and a path $\gamma$ connecting $(x,s)$ with $(y,t)$ such that $\gamma( r)\in C$ for all $r$, $s\leq r\leq t$. We denote this situation by $A\times \{s\}\rightarrow B\times \{t\}$ inside $C$.

To simplify the notation,  throughout  the paper we identify  $I\cap\mathbb{Z}$ with $I$ for every spatial interval. Also, we identify every configuration $\xi$ in $\{0,1\}^{\mathbb{Z}}$ with the subset $\{x\in \mathbb{Z}: \xi(x)=1\}$.
 
 Given a Harris construction $\mathcal{H}$ and  a subset $A$   of  $\mathbb{Z}$, we define the classical contact process beginning at time $s$  with initial configuration $A$ as follows
\begin{equation}\label{Contactprocess}
\eta^{A}_{s,t}=\{x: \text{ exists } y \in A \text{ such that }(y,s)\rightarrow(x,s+t)\}.
\end{equation}
In the special case of $s=0$, we just write $\eta^{A}_t$. Furthermore,  we define the time of extinction of $ \eta^{A}_t$  as follows
\begin{equation}\label{timext}
T^{A}=\inf\{t>0: \eta^{A}_t=\emptyset\}.
\end{equation}
If  $A\subset [1,N]$,  the classical contact process restricted  to $[1,N]$ with initial configuration $A$  is denoted by 
\begin{equation}\label{ContactoSinColoresEnlaCaja}
\xi^{A,N}_{t}=\{x: \text{ exists } y \in A \text{ such that }(y,0)\rightarrow(x,t)\text{ inside }[1,N]\}.
\end{equation}
For this process, we define the time of extinction as follows
$$
T^{A}_{N}=\inf\{t>0: \xi^{A,N}_{t}=\emptyset\}.
$$

For the classical contact process in dimension $1$ with initial configuration $(-\infty,0]$, we denote the rightmost infected particle  by
\begin{equation}\label{rightmost}
r^{(-\infty,0]}_t=\max\{x: \,\exists \, y\in (-\infty,0]\text{ such that }(y,0)\rightarrow(x,t)\}.
\end{equation}
In \cite{Liggett} it is proved that for $R=1$ there exists $\alpha>0$ such that
\begin{equation}\label{rightmostedgespeed}
\frac{r^{(-\infty,0]}_t}{t} \underset{t\rightarrow\infty}{\longrightarrow}\alpha \text{ almost surely.}
\end{equation}
The above result is obtained using the Subadditive Ergodic Theorem and monotonicity arguments, and it can be adapted for the case $R>1$. This convergence result will be useful in the next section. 

Now we define the contact process with two types of particles and priorities restricted to $[-N+1,N]$ using the Harris construction $\mathcal{H}$. For $A$ and $B$ disjoint subsets of $[-N+1,N]$, we denote by $\{\zeta^{A,B,N}_t\}_t$  the contact process with two types of particles and priorities restricted to the interval $[-N+1,N]$, with initial configuration $\mathds{1}_{A}+2\mathds{1}_B$ and  the particles of type $1$ having priority in $[-N+1,0]$ and the type 2 having priority in $[1,N]$. In this case, it is simple to state the definition of this process in terms of  a Harris construction, since we are dealing with a stochastic process which has c\`adl\`ag trajectories with jumps only in the times of the Poisson processes $\{P^{x}\}_{x\in [-N+1,N ]}$ or $\{P^{y\rightarrow x}\}_{\{y,x \in [-N+1,N ]: \hspace{0.2cm} 0<|x-y|\leq R\}}$. Let  $t$ be one of those times, two scenarios are possible:\begin{itemize}
    \item[(1)]  $t\in P^{x} $ for some $x$. In this case, $x$ is empty at this time and we set $\zeta^{A,B,N}_t(x)=0$;
    \item[(2)] $t\in P^{y\rightarrow x}$ for some $x$ and $y$.  If $x$ is occupied by a particle of type $i$ ($i=1,2$), and $x$ is in the region of priority of this type of particles, then nothing changes at $x$. Otherwise, $x$ became occupied by the type of particle that is in $y$ and we set $\zeta^{A,B,N}_t(x)=\zeta^{A,B,N}_t(y)$.
\end{itemize}

We are interested in studying  the time in which one of the types of
particles die out and we denote that time by $\tau^{A,B}_N$. 
\begin{remark}\label{rem:1}
Since the classical contact process and the contact process with two types of particles and priorities are defined using the same Harris construction $\mathcal{H}$, both processes are defined in the same probability space. This coupling will be used in all the work. 
\end{remark}

For the sake of clarity, we now introduce several notations. During all the work we refer to the contact process with two types of particles and priorities as the two-type contact process. We denote by $\eta^{\textbf{1}}_t$ the classical contact process in $\mathbb{Z}$ with initial configuration $\mathbb{Z}$.  We denote by $\xi^{\textbf{1},N}_{t}$   the classical contact process restricted to $[1,N]$ with  initial configuration  $[1,N]$ and denote the extinction time of this process by $T^{\textbf{1}}_N$. Furthermore,  we denote by $\xi^{x,N}_{t}$ the classical contact process with initial configuration $\{x\}$ and its extinction time by  $T^{x}_{N}$. For the initial configuration $\mathds{1}_{[-N+1,0]}+2\mathds{1}_{[1,N]}$, we denote the two-type contact process restricted to $[-N+1,N]$ by  $\zeta^{\textbf{1},\textbf{2},N}_{t}$ and $\tau^{\textbf{1},\textbf{2}}_{N}$ is the time when one of the  types of particles dies out.
We stress that, during the paper, the letters $\xi$ and $\eta$ refer to the classical contact process and $\zeta$ refers to the two-type contact process.

 Now, we are ready to enunciate the main results of this work.

\begin{theorem}\label{TeoMetaD=1R>1}
Let $\beta_N$ be such that $\pr(\tau^{\textbf{1},\textbf{2}}_N\geq \beta_N)=e^{-1}$, then
$$
\underset{N\rightarrow \infty}{\lim}\frac{\tau^{\textbf{1},\textbf{2}}_N}{\beta_N}=E \text{ in Distribution},
$$
where $E$ has exponential distribution with rate $1$.
\end{theorem}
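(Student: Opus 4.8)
\emph{Plan.} Write $F_N(t):=\pr(\tau^{\textbf{1},\textbf{2}}_N>t\beta_N)$. The plan is to reduce everything to the asymptotic loss‑of‑memory identity
\begin{equation}\label{eq:plan-lom}
F_N(s+t)-F_N(s)\,F_N(t)\;\xrightarrow[N\to\infty]{}\;0\qquad\text{for every fixed }s,t\ge 0 .
\end{equation}
Granting \eqref{eq:plan-lom}, the conclusion is soft: each $F_N$ is non‑increasing with values in $[0,1]$, so along any subsequence Helly's theorem extracts a further subsequence with $F_N(q)\to F(q)$ for every rational $q\ge 0$, $F$ non‑increasing; \eqref{eq:plan-lom} then gives the multiplicative Cauchy equation $F(s+t)=F(s)F(t)$ on the positive rationals. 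Since $\tau^{\textbf{1},\textbf{2}}_N$ has an atomless law ($\pr(\tau^{\textbf{1},\textbf{2}}_N=\beta_N)=0$, the extinction occurring at an epoch of the graphical clocks), we have $F_N(1)=\pr(\tau^{\textbf{1},\textbf{2}}_N>\beta_N)=\pr(\tau^{\textbf{1},\textbf{2}}_N\ge\beta_N)=e^{-1}$, hence $F(1)=e^{-1}$; together with monotonicity this forces $F(q)=e^{-q}$ on positive rationals and then $F(t)=e^{-t}$ for all $t>0$. As the limit does not depend on the subsequence, $F_N\to e^{-t}$ pointwise, which is the theorem. So the whole task is \eqref{eq:plan-lom}.

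\textbf{No fast extinction and a metastable set.} First I would use the coupling of Remark~\ref{rem:1} to observe that, inside $[-N+1,0]$, a type‑$1$ particle can never be overwritten (it sits in its priority region) and disappears only through a recovery mark $P^{x}$; hence the set of type‑$1$ sites in $[-N+1,0]$ dominates the classical contact process restricted to $[-N+1,0]$ started from $[-N+1,0]$, and symmetrically for type‑$2$ in $[1,N]$. Since a supercritical range‑$R$ contact process on an interval with $N$ sites survives beyond $e^{cN}$ with probability tending to $1$ (the metastability recalled in Section~\ref{Section4}), a union bound yields $\pr(\tau^{\textbf{1},\textbf{2}}_N\le\kappa_N)\to 0$ for any polynomial $\kappa_N$; in particular $\beta_N\ge e^{cN}$ eventually, so every polynomial time is $o(\beta_N)$. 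Next I would fix $\ell_N=\lfloor c_1\log N\rfloor$, $k_N=\lfloor\log^2 N\rfloor$ and let $\mathcal G_N$ be the set of configurations on $[-N+1,N]$ carrying a fully type‑$1$ block of length $\ell_N$ inside $[-N+1,-k_N]$ and a fully type‑$2$ block of length $\ell_N$ inside $[k_N+1,N]$. Using the linear growth \eqref{rightmostedgespeed} together with the \emph{barriers} of Section~\ref{Section3} to shield the interface window $[-k_N,k_N]$, I would establish two facts: starting from $\mathds 1_{[-N+1,0]}+2\mathds 1_{[1,N]}$ the two‑type process lies in $\mathcal G_N$ by any time $\kappa_N\ge CN$ with probability $1-o(1)$; and, more importantly, $\pr\big(\tau^{\textbf{1},\textbf{2}}_N>r,\ \zeta^{\textbf{1},\textbf{2},N}_r\notin\mathcal G_N\big)=o(1)$ uniformly over $r\ge\kappa_N$ — i.e.\ \emph{conditioned on survival the process sits in its metastable set}. (The non‑obvious point is that a surviving species cannot be supported only by a handful of particles in the \emph{other} species' home region; this is handled by barrier estimates together with the fact that type $1$ in $[1,N]$, being overwritten at positive rate by the surrounding type $2$, is strongly suppressed.)

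\textbf{The coalescence lemma.} The core of the argument is the estimate: for every $\zeta_\ast\in\mathcal G_N$ there is a coupling, on the same Harris construction, of the two‑type process $\{\zeta^{\zeta_\ast}_r\}_r$ started from $\zeta_\ast$ and of $\{\zeta^{\textbf{1},\textbf{2},N}_r\}_r$ such that $\pr\big(\zeta^{\zeta_\ast}_r=\zeta^{\textbf{1},\textbf{2},N}_r\ \forall\, r\ge 2CN\big)\ge 1-\epsilon_N$ with $\epsilon_N\to 0$ uniformly in $\zeta_\ast$. Inside the two priority regions this is the standard coupling of classical contact processes — a contact process on an $N$‑site interval that contains an $\ell_N$‑block coincides with the one started full after time $O(N)$ off an event of probability $e^{-c\ell_N}$ (Section~\ref{Section4}). \emph{The hard part}, which I expect to dominate the work, is propagating coalescence \emph{across the interface}: near sites $0,1$ the two‑type dynamics is genuinely non‑monotone — deleting or relabelling particles can shorten one extinction time while lengthening the other, so $\tau^{\textbf{1},\textbf{2}}_N$ enjoys no usable attractiveness — and one must invoke the barrier construction of Section~\ref{Section3} to show that, once the two couplings agree on both sides of a barrier, the $O(1)$‑wide interface window re‑synchronises within a further polynomial time with high probability. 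This confines the interface to an $o(\beta_N)$ transient and lets the classical estimates close the loop.

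\textbf{Assembling \eqref{eq:plan-lom}.} Finally, fix $s,t\ge 0$ and take $N$ large enough that $s\beta_N>2CN$. By the Markov property at time $s\beta_N$,
\[
F_N(s+t)=\esp\!\big[\mathds 1_{\{\tau^{\textbf{1},\textbf{2}}_N>s\beta_N\}}\,h_N(\zeta^{\textbf{1},\textbf{2},N}_{s\beta_N})\big],\qquad h_N(\zeta_\ast):=\pr_{\zeta_\ast}\!\big(\tau^{\textbf{1},\textbf{2}}_N>t\beta_N\big).
\]
On the survival event $\zeta^{\textbf{1},\textbf{2},N}_{s\beta_N}\in\mathcal G_N$ up to probability $o(1)$ by the previous paragraph, and for $\zeta_\ast\in\mathcal G_N$ the coalescence lemma together with $\pr(\tau^{\textbf{1},\textbf{2}}_N\le 2CN)\to 0$ gives $h_N(\zeta_\ast)=F_N(t)+o(1)$ \emph{uniformly}. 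Hence $F_N(s+t)=\esp[\mathds 1_{\{\tau^{\textbf{1},\textbf{2}}_N>s\beta_N\}}]\,(F_N(t)+o(1))+o(1)=F_N(s)\,F_N(t)+o(1)$, which is \eqref{eq:plan-lom}; combined with the first paragraph, the proof is complete. In short, modulo the one‑dimensional contact‑process metastability of Section~\ref{Section4}, everything rests on the barrier machinery of Section~\ref{Section3}, whose hardest use is the interface‑coalescence step.
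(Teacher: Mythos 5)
Your blueprint follows the same route as the paper: prove an asymptotic loss-of-memory identity $F_N(s+t)-F_N(s)F_N(t)\to 0$ via a Harris-construction regeneration/coalescence estimate, then deduce the exponential limit (the Helly--Cauchy step you make explicit at the start is exactly the content of the paper's throwaway line ``which by the definition of $\beta_N$ will imply $\lim_N\pr(\tau^{\textbf{1},\textbf{2}}_N\geq\beta_N t)=e^{-t}$''). Where the two arguments part ways is the shape of the coalescence lemma. The paper's Proposition \ref{Prop2}(i) proves the regeneration estimate uniformly over \emph{all} two-type configurations $\zeta_0\in\mathcal C$ (any $\zeta_0$ with at least one site of each type), with exponentially small error; it never introduces a ``metastable set'' $\mathcal G_N$, and the loss-of-memory is then assembled via a two-sided coupling (the process started at $\beta_N s-c_N$ coalesces both with the real process at $\beta_N s$ and with a fresh copy started at $\beta_N s$). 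You instead restrict coalescence to $\zeta_*\in\mathcal G_N$ and compensate with a separate uniform-in-time confinement claim $\pr(\tau>r,\ \zeta_r\notin\mathcal G_N)=o(1)$ for all $r\geq\kappa_N$. That confinement is a genuine extra lemma, and your sketch of why it holds (``type $1$ in $[1,N]$ is overwritten at positive rate, hence strongly suppressed'') is not the paper's mechanism. The paper's key input, Proposition \ref{Prop1} via Lemma \ref{PathsLemma}, is sharper and avoids any suppression heuristic: if no type-$2$ particle appears in $[1,N]$ over a window of length $a_{2N}$ \emph{and} over that same window the classical contact process started from the initial type-$1$ set regenerates (coincides with the full-occupancy process), then every occupied site is of type $1$, i.e.\ one species has already gone extinct and $\tau\leq a_{2N}$. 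This regeneration-versus-priority dichotomy is what makes the conditional-on-survival statement tractable in the absence of any usable monotonicity of $\tau^{\textbf{1},\textbf{2}}_N$, and packaging it into a single estimate uniform over $\mathcal C$ is what lets the paper skip your $\mathcal G_N$ intermediary. So your plan is correct in outline and essentially the same strategy, but the step you (rightly) flag as the hard part is resolved by this exact dichotomy rather than by a suppression argument, and that is the piece you would still have to supply.
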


\begin{theorem}\label{Theorem1}
There exists a constant $c_{\infty}>0$ depending only on the rate of infection $\lambda$ and the range $R$ such that
$$\underset{N\rightarrow \infty}{\lim}\frac{1}{N}\log\tau^{\textbf{1},\textbf{2}}_N=c_{\infty}\text{ in Probability}.$$
\end{theorem}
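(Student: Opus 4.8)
The plan is to identify $c_\infty$ with the constant governing the metastable lifetime of the one-dimensional classical contact process with range $R$ and rate $\lambda$ in a box of size $N$: the positive constant, furnished by the results of Section~\ref{Section4}, for which $\frac1N\log T^{\textbf{1}}_N\to c_\infty$ in probability (the same limit holds, by translation invariance of $\mathcal H$, for the classical contact process restricted to $[-N+1,0]$ with initial configuration $[-N+1,0]$). It then suffices to prove, for every $\eps>0$, that $\pr\big(\frac1N\log\tau^{\textbf{1},\textbf{2}}_N<c_\infty-\eps\big)\to0$ and $\pr\big(\frac1N\log\tau^{\textbf{1},\textbf{2}}_N>c_\infty+\eps\big)\to0$ as $N\to\infty$. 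Everything below is carried out in the coupled probability space of Remark~\ref{rem:1}.

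For the lower bound we compare each species, inside its own priority region, with a classical contact process. A type-$1$ particle at a site of $[-N+1,0]$ is never removed by a type-$2$ particle, and it is created exactly under the classical rules together with some extra births coming from type-$1$ particles in $[1,R]$; an induction over the marks of $\mathcal H$ shows that the type-$1$ particles of $\zeta^{\textbf{1},\textbf{2},N}_t$ lying in $[-N+1,0]$ dominate the classical contact process restricted to $[-N+1,0]$ started full, and symmetrically for type $2$ inside $[1,N]$. At time $\tau^{\textbf{1},\textbf{2}}_N$ one of the two species is empty, hence so is the corresponding dominated classical process in a box of size $N$, so $\tau^{\textbf{1},\textbf{2}}_N\ge\min\{\sigma^L_N,\sigma^R_N\}$, with $\sigma^L_N,\sigma^R_N$ the extinction times of those classical processes. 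By Section~\ref{Section4}, $\frac1N\log\sigma^L_N$ and $\frac1N\log\sigma^R_N$, hence their minimum, converge to $c_\infty$ in probability, which gives the lower bound; note $c_\infty>0$ since the classical constant is positive.

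For the upper bound we must exhibit, with high probability, a time at most $e^{(c_\infty+\eps)N}$ at which one of the two species is already extinct. The idea is that the losing species, on this time scale, cannot gain more than a negligible foothold in the other's priority region, and therefore dies essentially like a classical contact process in a box of size $N$. This rests on two facts. First, a type-$1$ particle may occupy a site of $[1,N]$ only while it is empty, and is then at once exposed to being overwritten by type $2$; since, by the comparison above, the type-$2$ particles in $[1,N]$ always dominate the supercritical classical contact process started full there, the portion of $[1,N]$ ever reached by type $1$ up to time $e^{(c_\infty+\eps)N}$ lies in the empty set of a supercritical contact process, and the barriers of Section~\ref{Section3} show that this empty set does not percolate, in space--time, across a large spatial window within such a horizon. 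Second, type $1$ restricted to $[-N+1,0]$ receives extra births only from the bounded region $[1,R]$, hence at bounded rate, so it is dominated by a classical contact process in the box $[-N+1,0]$ perturbed by a bounded-rate source at its right end, and adding such a source does not change the exponential growth rate of the extinction time. Combining these, type $1$ is confined to an interval of length $N(1+o(1))$ and is extinct by $e^{(c_\infty+\eps)N}$ with high probability (and likewise for type $2$), which gives the matching upper bound and, with the lower bound, the theorem. Alternatively, once Theorem~\ref{TeoMetaD=1R>1} is available, the convergence in distribution of $\tau^{\textbf{1},\textbf{2}}_N/\beta_N$ gives $\frac1N\log\tau^{\textbf{1},\textbf{2}}_N-\frac1N\log\beta_N\to0$ in probability and reduces everything to the deterministic fact $\frac1N\log\beta_N\to c_\infty$.

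The main difficulty is this confinement step: upgrading the qualitative statement ``the losing species survives only in the complement of an established supercritical population of the other type'' to the quantitative one that this costs a lifetime factor $e^{o(N)}$, uniformly over a window of exponential length. This is really a statement about the competition interface near the origin --- one must preclude that, over exponentially many attempts, the loser makes a macroscopic excursion into, or draws sustained help from, the winner's region. A crude union bound over a single bad crossing of the empty region only confines the loser to a box of length $(1+\delta_0)N$ for some fixed $\delta_0>0$, which would give $\limsup\frac1N\log\tau^{\textbf{1},\textbf{2}}_N\le(1+\delta_0)c_\infty$ rather than $c_\infty$; removing this loss is exactly what the multi-scale barrier construction of Section~\ref{Section3} is designed to do, and it is the technical core of Section~\ref{Section6}.
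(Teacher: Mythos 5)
Your lower bound is correct and is essentially the paper's: within each priority region the two-type process dominates the classical contact process restricted to that region, so $\tau^{\textbf{1},\textbf{2}}_N$ is at least the minimum of the two restricted extinction times, and \eqref{Prop 0 eq4} finishes. Your upper bound, however, has a genuine gap: you want to confine the loser to a window of length $N(1+o(1))$ over the \emph{entire} exponential horizon $e^{(c_\infty+\epsilon)N}$ and then have it die like a classical contact process inside that window. No such uniform-in-time confinement is available, nor is it what the paper proves. The domination you invoke --- type-$2$ in $[1,N]$ contains the restricted classical process $\xi^{\textbf{1},N}_t$, so the vacated region stays thin --- becomes vacuous once $\xi^{\textbf{1},N}$ has died, and $T^{\textbf{1}}_N$ is of order $e^{c_\infty N}$, which is precisely the time scale you need to control. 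The ``alternative'' in your last sentence of the second paragraph is also circular: proving $\frac1N\log\beta_N\to c_\infty$ is Theorem~\ref{Theorem1} in disguise, since $\beta_N$ is defined solely through the law of $\tau^{\textbf{1},\textbf{2}}_N$.

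The paper's argument is structurally different and sidesteps the difficulty you (rightly) flag as the technical core. Proposition~\ref{Prop3}, built from the $N$-barriers, shows only that, outside an event of probability $\le c^{KN}$, there exists \emph{some single time} within a polynomial window $[0,2N^2Ka_{2N}]$ at which all type-$2$ particles lie in $[1,N]$. Lemma~\ref{LemmaMountford2}, via \eqref{Fact(ii)}, then gives that from such a confined configuration type $2$ dies within an additional time $\theta N$ with probability at least $e^{-(c_\infty+\epsilon)N}$. Combining these yields, for each block of length $k_N=2NK^*a_{2N}+\theta N$, the bound $\sup_{\zeta_0\in\mathcal C}\pr(\tau^{\zeta_0}_N>k_N)\le 1-e^{-(c_\infty+\epsilon)N}+e^{-(2c_\infty+2\epsilon)N}$, and iterating by the Markov property over $N^*=\lfloor e^{(c_\infty+2\epsilon)N}\rfloor$ blocks gives $\pr(\tau^{\textbf{1},\textbf{2}}_N>k_N N^*)\to 0$. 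No statement that the loser never makes a macroscopic excursion over the exponential horizon is required; each polynomial-length block only needs a small but quantified chance of one species dying, and the exponentially many repeated attempts do the rest. So the missing idea in your sketch is the block-iteration, and the attribution of the heavy lifting to a long-time confinement via Section~\ref{Section3} is off target --- the barriers are used only for the one-time confinement inside each block.
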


\section{Barriers in finite volume}\label{Section3}
In  this section, we introduce the definition of  an $N$\emph{-barrier} which is similar to the notion called descendancy barrier introduced in  \cite{Conos}.  The main difference between these two concepts is that the $N$-\emph{barrier} is defined for the classical contact process in an interval whose length depends on $N$, while the descendancy barrier is defined in the whole line. This section is devoted to establishing some properties of $N$\emph{-barriers} and follows closely Section $2.2$ of \cite{Conos}.

 The main idea behind the structure we introduce here is to extend for the classical contact process with range $R>1$ the following property that holds in the case $R=1$: Fix $x$ in $[1,N]$, $D>0$, $t\in[DN^2,2DN^2]$ and consider the event
$$\{(x,0)\rightarrow(1,t) \text{ inside }[1,N] ; (x,0)\rightarrow(N,t) \text{ inside }[1,N]\}.$$ 
By the path crossing property, we have in this event that $\xi^{x}_N(t)=(\eta^{\mathbb{Z}}_t\cap[1,N])$. Furthermore, Corollary $1$ in \cite{MountfordSweet} establishes that for any $D>0$ there is a constant $\delta>0$ such that
$$
\pr((z,0)\rightarrow(y,t) \text{ inside }[1,N])\geq \delta,
$$
for any $z,y\in[1,N]$, $t\in[DN^2, 2DN^2]$ and sufficiently large $N$. Now, by the $FKG$-inequality  we have 
$$
\pr((x,0)\rightarrow(1,t) \text{ inside }[1,N] ; (x,0)\rightarrow(N,t) \text{ inside }[1,N])\geq \delta^2,
$$
which implies that there exists $\hat{\eta}=\delta^{2}$ such that
\begin{equation}\label{N-barrierD=1}
\underset{x\in[1,N]}{\inf}\pr(\xi^{x}_N(t)=(\eta^{\textbf{1}}_t\cap[1,N]))>\hat{\eta}>0.
\end{equation}

The strong use of the path crossing property to obtain \eqref{N-barrierD=1} restricts this argument to the case $R=1$. In Proposition \ref{N-barrier} we extend \eqref{N-barrierD=1} to the classical contact process with range $R>1$. For this purpose,  we introduce the definition of  an $N$\emph{-barrier}. The main tool behind the construction of  an $N$-\emph{barrier} is the Mountford-Sweet renormalization  introduced in  \cite{MountfordSweet}, which we briefly  discuss now. To this end, we first recall some notions of oriented percolation.

Consider $\Lambda=\{(m,n)\in \mathbb{Z}\times \mathbb{Z}^{+}: m+n \text{ is even }\}$, $\Omega=\{0,1\}^{\Lambda}$ and $\mathcal{F}$ the $\sigma$-algebra generated by the cylinder sets of $\Omega$. Also, we consider $\mathcal{F}_n$ the $\sigma$-algebra generated by the cylinder sets of $\Omega$ that depend on points $(m,s)\in \Lambda$ with $s\leq n$.
 
 Given $\Psi \in \Omega$, we say that two points $(m,k)$, $(m',k')\in\Lambda$ with $k<k'$ are \emph{connected by an open path} (\emph{according to} $\Psi$) \cite{Conos}, if there exists a sequence $\{(m_i, n_i)\}_{0\leq i\leq k'-k}$  such that 
 $$(m_0,n_0)=(m, k),\quad (m_{k'-k},n_{k'-k})=(m', k'),\quad |m_{i+1}-m_{i}|=1,\quad  n_i=k+i,$$ with $0\leq i\leq k'-k-1$ and $\Psi(m_i, n_i)=1$ for all $i$. 
 If $(m,k)$ and $(m',k')$ are connected by an open path (according to $\Psi$), we write $(m,k)\rightsquigarrow (m',k')$ (according to $\Psi$).

Now, let $A$, $B$ and $C$ be subsets of $\Lambda$. We say that $A \times \{n \}$ is connected with $B\times\{n' \} $  inside $C$, if there are $m\in A$ and $m' \in B$ such that $(m,n)\rightsquigarrow (m',n')$ and all the edges of the path are in $C$. In this case, we write $A\times\{n\}\rightsquigarrow B\times\{n'\} \text{ inside }C$.

Given $k\geq 1$ and $\delta>0$,  $(\Omega,\mathcal{F}, \hat{\mathbb{P}})$ is a $k$-dependent oriented percolation system with closure below $\delta$, if  for all $r$ positive
\begin{small}
$$
\hat{\mathbb{P}}( \Psi(m_i,n)=0, \forall i\hspace{ 0.2 cm} 0\leq i\leq r|\mathcal{F}_{n-1})<\delta^r,
$$
\end{small}with $(m_i,n)\in \Lambda$ and $|m_i-m_j|>2k$ for all $i \neq j$ and $1 \leq i,j \leq r$ (see \cite{MountfordSweet}, \cite{Conos}).

Let $\Psi$ and $\Psi'$ be two elements of $\Omega$, we say that  $\Psi\leq \Psi'$  if $\Psi(m,n)\leq\Psi'(m,n)$, for all $(m,n)\in \Lambda$. Also,  we say that a subset  $A$ of $\{0,1\}^\Lambda$ is increasing if $\Psi \in A$ and $\Psi\leq\Psi'$,  then $\Psi' \in A$.  Let $\hat{\mathbb{P}}_1$ and $\hat{\mathbb{P}}_2$ be two measures on $\mathcal{F}$, we say that $\hat{\mathbb{P}}_1$ stochastically dominates $\hat{\mathbb{P}}_2$ if $\hat{\mathbb{P}}_1(A)\geq\hat{\mathbb{P}}_2(A)$ for all $A$ increasing in $\mathcal{F}$.

  The next result follows via the dual-contours methods of Durrett; for details see \cite{Survey}.  
\begin{lemma}\label{surveyD} Let $\hat{\mathbb{P}}_{p}=\prod_{\Lambda} (p\delta_1+(1-p)\delta_0)$ be the Bernoulli product measure on $\Lambda$. There exist $\epsilon_{1}$ and $p_0$ such that for all $p>p_0$:
	  $$
	        \underset{\begin{tiny}\begin{array}{c}x\in[1,M], \hspace{0.1cm}x \text{ even};\\
	         y \in [1,M],\hspace{0.1cm} y+M^{2} \text{ even}  
	    \end{array}\end{tiny}}{\inf} \hat{\mathbb{P}}_{p}((x,0)\rightsquigarrow(y,M^{2}) \text{ inside } [1,M] )\geq \epsilon_1,
	    $$
for every $M$ large enough. 
	\end{lemma}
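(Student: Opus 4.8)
The plan is to run the three classical tools for supercritical oriented percolation — Durrett's Peierls-type ``dual contour'' estimates (see \cite{Survey}), the linear-growth shape theorem (the oriented-percolation counterpart of \eqref{rightmostedgespeed}, with edge speed $\vec\alpha=\vec\alpha(p)>0$), and the time-reversal symmetry of the percolation picture — and to reduce the statement to a single ``filling'' estimate inside the strip $[1,M]$, gluing afterwards a forward open cluster started at the bottom of the strip to a backward one started at the top. Fix once and for all $\epsilon_0\in(0,\tfrac12)$. I first choose $p_0<1$ so large that, for every $p>p_0$: (i) the contour estimates are in force, i.e.\ a prescribed self-avoiding dual contour of length $\ell$ fails to be crossed with probability at most $\kappa(p)^{\ell}$, and, since the number of such contours through a fixed cell grows only exponentially in $\ell$, the contour sums $\sum_{\ell\ge L}(\#\text{contours})\,\hat{\mathbb{P}}(\,\cdot\,)$ are bounded by $C\mu(p)^{L}$ with $\mu(p)<1$; and (ii) $\theta(p)>1-\epsilon_0/4$, where $\theta(p)$ denotes the probability that the cluster of a single site is infinite (recall $\theta(p)\to1$ as $p\to1$).

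The heart of the argument is the following claim: for $p>p_0$ and $M$ large, for every even $x\in[1,M]$,
\[
\hat{\mathbb{P}}_p(A_x)\ \ge\ \rho_0:=\tfrac12\,\theta(p_0),\qquad
A_x:=\bigl\{\,|W^x_n|\ge(1-\epsilon_0)\,|[1,M]\cap\Lambda_n|\ \text{ for all }\ n\in[C_0M,M^2]\,\bigr\},
\]
where $W^x_n=\{m:(x,0)\rightsquigarrow(m,n)\text{ inside }[1,M]\}$ and $C_0=C_0(p)$ is a constant. On the event that the cluster of $(x,0)$ survives forever in $\mathbb{Z}$ — which has probability $\theta(p)>1-\epsilon_0/4$ — the shape theorem gives that after $C_0M$ levels the cluster has already reached both walls $\{0\}$ and $\{M+1\}$, and on its occupied interval, which from then on is all of $[1,M]$, it has relaxed to density at least $\theta(p)-o(1)$; moreover the event that this density ever drops below $1-\epsilon_0$ before level $M^2$, or that the strip-restricted cluster $W^x_n$ fails to track the $\mathbb{Z}$-cluster, forces a dual contour inside the strip of length at least $\min(M,c\epsilon_0M)$, so a union bound over the $\le M^2$ relevant levels bounds the corresponding probability by $CM^2\mu(p)^{cM}=o(1)$. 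Hence $\hat{\mathbb{P}}_p(A_x)\ge\theta(p)-o(1)\ge\theta(p_0)/2$ for $M$ large. The case of $x$ near a wall is reduced, after reflecting the strip so that $x\le(M+1)/2$, to the observation that the left wall merely reflects the cluster — which does not decrease densities and does not slow the right edge — or else by quoting the half-line version of the shape theorem directly. By time-reversal symmetry the same bound, with the same $\rho_0$, holds for the backward cluster of $(y,M^2)$ (with $y+M^2$ even): with probability at least $\rho_0$ the set $\widetilde W^y_n=\{m:(m,n)\rightsquigarrow(y,M^2)\text{ inside }[1,M]\}$ satisfies $|\widetilde W^y_n|\ge(1-\epsilon_0)\,|[1,M]\cap\Lambda_n|$ for all $n\in[0,M^2-C_0M]$; call this event $A^{y}$.

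To conclude, observe that $A_x$ and $A^{y}$ are increasing events, so the Harris--FKG inequality for the product measure $\hat{\mathbb{P}}_p$ gives $\hat{\mathbb{P}}_p(A_x\cap A^{y})\ge\rho_0^2$. For $M$ large the level $\ell_0:=\lfloor M^2/2\rfloor$ lies in both ranges $[C_0M,M^2]$ and $[0,M^2-C_0M]$, and on $A_x\cap A^{y}$ the sets $W^x_{\ell_0}$ and $\widetilde W^y_{\ell_0}$ are subsets of $[1,M]\cap\Lambda_{\ell_0}$ each of cardinality strictly more than half of $|[1,M]\cap\Lambda_{\ell_0}|$ — this is where $\epsilon_0<\tfrac12$ enters — hence they meet at some $m$; concatenating an open path from $(x,0)$ to $(m,\ell_0)$ with the reversal of an open path from $(m,\ell_0)$ to $(y,M^2)$, both staying inside $[1,M]$, produces an open path from $(x,0)$ to $(y,M^2)$ inside $[1,M]$. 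This shows $\hat{\mathbb{P}}_p((x,0)\rightsquigarrow(y,M^2)\text{ inside }[1,M])\ge\rho_0^2$ uniformly in the admissible $x,y$ and in all large $M$. Since every event used is increasing in the configuration, its probability is nondecreasing in $p$, so the choices $\epsilon_1:=\rho_0^2=\tfrac14\theta(p_0)^2$ and this $p_0$ serve for all $p>p_0$, as required.

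The step I expect to be the genuine obstacle is the filling estimate of the second paragraph: quantifying the relaxation of the strip-restricted cluster to density close to $1$, and above all organizing the dual-contour sums in the presence of the two lateral walls of $[1,M]$. This is precisely the technical content supplied by the ``dual-contour methods of Durrett'' cited before the statement, so in practice I would carry it out by transcribing those arguments — dual contours, the shape theorem, and time reversal — to the width-$M$ strip.
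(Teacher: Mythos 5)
The paper does not actually supply a proof of this lemma; it only says the result ``follows via the dual-contours methods of Durrett; for details see \cite{Survey}.'' So there is nothing in the text to compare against line by line. That said, your architecture --- show that the forward restricted cluster of $(x,0)$ and the backward restricted cluster of $(y,M^2)$ each occupy more than half of a middle level, then glue via Harris--FKG and the pigeonhole --- is indeed the standard skeleton of the dual-contour argument for such crossing estimates, and your gluing step (including the parity check that $W^x_{\ell_0}$ and $\widetilde W^y_{\ell_0}$ live on the same sublattice at level $\ell_0$) is correct.

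The step worth flagging is exactly the one you identify as the obstacle, the filling estimate. There is a real subtlety buried in the phrase ``on the event that the cluster of $(x,0)$ survives forever in $\mathbb{Z}$ \dots\ the cluster has already reached both walls'': survival of the \emph{unrestricted} cluster does not by itself imply that the \emph{strip-restricted} cluster $W^x_n$ reaches the walls, since every open path from $(x,0)$ to the far edge of the unrestricted cluster might exit $[1,M]$. The shape theorem for the unrestricted process is therefore not quite the right tool here. The route the dual-contour method actually delivers --- and which mirrors the $R=1$ discussion the paper gives just before Proposition \ref{N-barrier} --- is: (i) prove directly, via a Peierls bound on dual contours confined to the strip, that with probability bounded away from $0$ the restricted cluster of $(x,0)$ connects inside $[1,M]$ to both $(1,\ell)$ and $(M,\ell)$ for some level $\ell\le C_0 M$; (ii) invoke path-crossing (a genuinely nearest-neighbor fact) to conclude that on this event $W^x_n$ coincides, for $n\ge\ell$, with the restricted cluster started from the full base $[1,M]\times\{0\}$; (iii) show by time reversal plus a union bound over levels, again via the contour sums, that this full-base restricted cluster has density at least $1-\epsilon_0$ at every level up to $M^2$ with probability $1-o(1)$. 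Your write-up slides over (i)--(ii) by substituting the unrestricted shape theorem, which conflates the restricted and unrestricted edge processes. This is a gap in detail rather than a wrong plan --- and you do acknowledge it --- but it is precisely where the content of the cited ``dual-contour methods'' lives, so it deserves to be stated correctly rather than deferred.
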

The following lemma is a consequence of Theorem $0.0$ in \cite{ LiggetSchonmannStacey}  and allows us to extend Lemma \ref{surveyD} to $k$\emph{-dependent} percolation systems with small closure.
\begin{lemma}\label{LiggettSmallClosure}
Consider $\hat{\mathbb{P}}_{p}$ the Bernoulli product measure on $\Lambda$. For  $k\in \mathbb{N}$  and  $0<p<1$ fixed, there exists $\delta>0$ such that if $(\Omega,\mathcal{F},\hat{\pr})$ is a  k-dependent oriented percolation system with closure below   $\delta$,  then    $\hat{\mathbb{P}}$  stochastically dominates $\hat{\mathbb{P}}_{p}$.
\end{lemma}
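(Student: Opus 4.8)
The plan is to deduce the statement from Theorem $0.0$ of \cite{LiggetSchonmannStacey}, which compares a $k$-dependent $\{0,1\}$-valued field on a graph of bounded degree with a Bernoulli product measure: if the conditional one-site densities of the field are uniformly close to $1$, then the field stochastically dominates from below a product measure whose density can be made as close to $1$ as desired. With such a result in hand it only remains to tune the parameter $\delta$ so that the dominating product density exceeds $p$ and then to invoke transitivity of stochastic domination.

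First I would isolate the two elementary consequences of the closure-below-$\delta$ hypothesis that are needed. Taking $r=1$ in the defining inequality yields $\hat{\mathbb{P}}(\Psi(m,n)=1\mid\mathcal{F}_{n-1})>1-\delta$ for every $(m,n)\in\Lambda$; that is, the conditional one-site densities are bounded below by $1-\delta$, uniformly in the site and in the past configuration. Taking general $r$ with $2k$-separated sites gives the geometric bound $\hat{\mathbb{P}}(\Psi(m_i,n)=0,\ 1\le i\le r\mid\mathcal{F}_{n-1})<\delta^{r}$, which is precisely the control of long runs of closed sites within a single time level that one needs in order to run the comparison in the presence of short-range correlations. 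These facts, together with the $k$-dependence of the system and the fact that the graph $\Lambda$ has bounded degree, match the hypotheses of the theorem of \cite{LiggetSchonmannStacey}, so I obtain a function $h(\delta,k)$ with $h(\delta,k)\uparrow 1$ as $\delta\downarrow 0$ and a coupling under which $\hat{\mathbb{P}}$ stochastically dominates $\hat{\mathbb{P}}_{h(\delta,k)}$.

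A convenient way to set up this comparison is an inductive coupling on the time coordinate $n$ of $\Lambda$: assuming one has coupled $\Psi\sim\hat{\mathbb{P}}$ with $\Psi'\sim\hat{\mathbb{P}}_{h}$ on all levels below $n$ with $\Psi\geq\Psi'$ there, one uses that conditionally on $\mathcal{F}_{n-1}$ the slice $\{\Psi(m,n)\}_{m}$ stochastically dominates a product measure of density $h$ on the copy of $\mathbb{Z}$ indexing that level, and then invokes Strassen's theorem to extend the monotone coupling to level $n$. The single-level domination is the substantive input and is the content of \cite{LiggetSchonmannStacey} (a Peierls-type estimate bounding the probability of a long block of closed sites); the one point that needs attention is that every such estimate must be uniform over the conditioning $\sigma$-algebra $\mathcal{F}_{n-1}$, which is exactly why the closure hypothesis is phrased conditionally on $\mathcal{F}_{n-1}$.

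Finally, given $k$ and $p$, I would fix $\delta>0$ small enough that $h(\delta,k)>p$. Then $\hat{\mathbb{P}}_{h(\delta,k)}$ stochastically dominates $\hat{\mathbb{P}}_{p}$, and since domination of increasing events is transitive, $\hat{\mathbb{P}}$ stochastically dominates $\hat{\mathbb{P}}_{p}$, as claimed. The main obstacle is not in this reduction but in the imported single-level comparison of \cite{LiggetSchonmannStacey}: establishing that a $k$-dependent $\{0,1\}$-field whose closure on $2k$-separated sets decays like $\delta^{r}$ dominates a product measure whose density tends to $1$ with $\delta$; everything else is bookkeeping with conditional probabilities and transitivity of stochastic domination.
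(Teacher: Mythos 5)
The paper states this lemma without any proof at all — it is presented as a direct consequence of Theorem~0.0 of \cite{LiggetSchonmannStacey}, with the sentence preceding it serving as the entire justification. Your proposal takes exactly this route (invoke LSS, obtain a dominating product measure of density $h(\delta,k)$ tending to $1$ as $\delta\downarrow 0$, tune $\delta$ so that $h(\delta,k)>p$, use transitivity of domination), and then fills in detail the paper does not bother to write out: extracting the single-site conditional density $1-\delta$ from the $r=1$ closure inequality, and organizing the comparison as an inductive monotone coupling over the time levels of $\Lambda$ via Strassen's theorem, using that the closure hypothesis is stated conditionally on $\mathcal{F}_{n-1}$. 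This is a faithful and reasonable reconstruction of what a proof would look like; it is essentially the same approach as the paper, with the paper's implicit argument made explicit.

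One small inaccuracy worth flagging: you present the $r>1$ closure bound as ``precisely the control of long runs of closed sites \dots\ that one needs in order to run the comparison.'' In fact the hypotheses of the LSS domination theorem are $k$-dependence of the field plus a uniform lower bound on single-site conditional densities; the $\delta^r$ estimate on $r$ well-separated sites is a \emph{consequence} of that structure (or, in the present set-up, of domination by a product measure once it is established), not a hypothesis LSS requires. What the $2k$-separation in the definition actually encodes is the finite range of dependence within a time slice, which is what licenses treating a single slice as a bounded-degree graph when applying the LSS comparison conditionally on $\mathcal{F}_{n-1}$. This does not break your argument, but the emphasis is slightly misplaced.
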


 	We now  define  a measurable map $\Psi$, with state space  $\Omega$, introduced in \cite{MountfordSweet}. The definition of this map depends on two positive integers $\hat{N}$ and $\hat{K}$. In \cite{MountfordSweet} it is proved that for any $\delta$ it is possible to choose $\hat{N}$ and $\hat{K}$ such that the law of $\Psi$ is a   $k$\emph{-dependent} percolation system  with closure under $\delta$. 
 	
 	Let $\hat{N}$ and $\hat{K}$ be two positive integers. 
		Given $m\in \mathbb{Z}$ and $n \in \mathbb{Z}^{+}$ such that $m+n$ is even, we define the following sets 
\begin{align*}
&\mathcal{I}^{\hat{N}}_m=\left(\frac{m\hat{N}}{2}-\frac{\hat{N}}{2},\frac{m\hat{N}}{2}+\frac{\hat{N}}{2}\right]\cap \mathbb{Z},\\&
I^{\hat{N},\hat{K}}_{(m,n)}=\mathcal{I}^{\hat{N}}_m\times \{\hat{K}\hat{N}n\},\\&
J^{\hat{N},\hat{K}}_{(m,n)}=\left(\frac{m\hat{N}}{2}-R, \frac{m\hat{N}}{2}+R\right)\times [\hat{K}\hat{N}n, \hat{K}\hat{N}(n+1)].
\end{align*}
We call the set 
$$
I^{\hat{N},\hat{K}}_{(m,n)}\cup J^{\hat{N},\hat{K}}_{(m,n)}\cup I^{\hat{N},\hat{K}}_{(m,n+1)}
$$
the renormalized box corresponding to $(m,n)$, or just the box $(m,n)$.

We start defining an auxiliary  $\Phi\in \{0,1,2\}^{\Lambda}$. Given $(m,0)\in \Lambda$, put $\Phi(m,0)=1$ if the following conditions are satisfies 
\begin{equation}\label{MSa0}
\begin{array}{c}
\text{ For each interval }I\subset \mathcal{I}^{\hat{N}}_{m-1} \cup \mathcal{I}^{\hat{N}}_{m+1} \text{ of length } \sqrt{\hat{N}},\\ \text{ it holds } I \cap \eta^{\textbf{1}}_{\hat{N}\hat{K}}\neq \emptyset;
\end{array}
\end{equation}

\begin{equation}\label{MSb0}
\begin{array}{c}
\text{ If }x\in  \mathcal{I}^{\hat{N}}_{m-1} \cup \mathcal{I}^{\hat{N}}_{m+1}\text{ and }\mathbb{Z}\times{\{0\}}\rightarrow (x,\hat{K}\hat{N}),\\
\text{ then } I^{\hat{N},\hat{K}}_{(m,0)}\rightarrow (x,\hat{K}\hat{N});
\end{array}
\end{equation}

\begin{equation}\label{MSc0}
\begin{array}{c}
\text{If }(x,s) \in J^{\hat{N},\hat{K}}_{(m,0)} \text{ and }\mathbb{Z}\times{\{0\}}\rightarrow (x,s),\\
\text{ then }I^{\hat{N},\hat{K}}_{(m,0)}\rightarrow (x,s);
\end{array}
\end{equation}

\begin{small}
\begin{align}\label{MSd0}
\begin{split}
    &\left\lbrace\begin{array}{c}
x\in \mathbb{Z}:\exists s, t,0\leq s<t\leq \hat{K}\hat{N},\\
 y\in  \mathcal{I}^{\hat{N}}_{m-1} \cup \mathcal{I}^{\hat{N}}_{m+1}  \text{ such that } (x,s)\rightarrow (y,t)
\end{array}\right\rbrace  \\& \hspace{5cm}\subset\left[ \frac{m\hat{N}}{2}-2\alpha \hat{K}\hat{N}, \frac{m\hat{N}}{2}+2\alpha \hat{K}\hat{N} \right].
\end{split}
\end{align}\end{small}put $\Phi(m,0)=0$ otherwise. Given $(m,n)\in \Lambda$ with $n\geq 1$, put $\Phi(m,n)=1$ if
\begin{equation}\label{Msa'}
1\in \{\Phi(m-1,n-1),\Phi(m+1,n-1)\};
\end{equation}

\begin{equation}\label{MSa}
\begin{array}{c}
\text{ For each interval }I\subset \mathcal{I}^{\hat{N}}_{m-1} \cup \mathcal{I}^{\hat{N}}_{m+1} \text{ of length } \sqrt{\hat{N}},\\ \text{ it holds } I \cap \eta^{\textbf{1}}_{\hat{N}(n+1)}\neq \emptyset;
\end{array}
\end{equation}

\begin{equation}\label{MSb}
\begin{array}{c}
\text{ If }x\in  \mathcal{I}^{\hat{N}}_{m-1} \cup \mathcal{I}^{\hat{N}}_{m+1}\text{ and }\eta^{\textbf{1}}_{\hat{K}\hat{N}n}\times{\{\hat{K}\hat{N}n\}}\rightarrow (x,\hat{K}\hat{N}(n+1)),\\
\text{ then }(\eta^{\textbf{1}}_{\hat{K}\hat{N}n}\times{\{\hat{K}\hat{N}n\}})\cap I^{\hat{N},\hat{K}}_{(m,n)}\rightarrow (x,\hat{K}\hat{N}(n+1));
\end{array}
\end{equation}

\begin{equation}\label{MSc}
\begin{array}{c}
\text{If }(x,s) \in J^{\hat{N},\hat{K}}_{(m,n)} \text{ and }\eta^{\textbf{1}}_{\hat{K}\hat{N}n}\times{\{\hat{K}\hat{N}n\}}\rightarrow (x,s),\\
\text{ then }(\eta^{\textbf{1}}_{\hat{K}\hat{N}n}\times{\{\hat{K}\hat{N}n\}})\cap I^{\hat{N},\hat{K}}_{(m,n)}\rightarrow (x,s);
\end{array}
\end{equation}

\begin{small}
\begin{align}\label{MSd}
\begin{split}
    &\left\lbrace\begin{array}{c}
x\in \mathbb{Z}:\exists s, t,\hat{K}\hat{N}n\leq s<t\leq \hat{K}\hat{N}(n+1),\\
 y\in  \mathcal{I}^{\hat{N}}_{m-1} \cup \mathcal{I}^{\hat{N}}_{m+1}  \text{ such that } (x,s)\rightarrow (y,t)
\end{array}\right\rbrace  \\& \hspace{5cm}\subset\left[ \frac{m\hat{N}}{2}-2\alpha \hat{K}\hat{N}, \frac{m\hat{N}}{2}+2\alpha \hat{K}\hat{N} \right].
\end{split}
\end{align}\end{small}
If \eqref{Msa'} fails put $\Phi(m,n)=2$, and in every other case put $\Phi(m,n)=0$. Finally, set 

\begin{equation}
\Psi(m,n)=\left\lbrace
\begin{array}{cl}
0, & \text{ if }\Phi(m,n)=0\\
1, & \text{ otherwise}.
\end{array}\right.
\end{equation}

We now make several remarks about the conditions in the definition of $\Phi$. First, equation \eqref{MSa} implies that there are many sites on the base of the boxes  $(m-1,n)$ and $(m+1,n)$ which are connected in the Harris construction with $\mathbb{Z}\times\{ 0\}$. Second, equation \eqref{MSb} yields that if  a site at the top of the box  $(m,n)$ is connected in the Harris construction with $\mathbb{Z}\times \{0\}$, then it is connected with the base of the box $(m,n)$. Third, equation  \eqref{MSc} guarantees that if a  site in the rectangle $J^{\hat{N},\hat{K}}_{(m,n)}$ is connected with $\mathbb{Z}\times \{0\}$, then it is connected with the base of the  box $(m,n)$. Finally, equation \eqref{MSd} implies that every path connecting a site in the  box  $(m,n)$ with $\mathbb{Z}\times\{ 0 \}$  is inside the rectangle 
\begin{equation}\label{rect}\left[ \frac{m\hat{N}}{2}-2\alpha \hat{K}\hat{N}, \frac{m\hat{N}}{2}+2\alpha \hat{K}\hat{N} \right]\times[\hat{K}\hat{N}n,\hat{K}\hat{N}(n+1)].
\end{equation}
 The  rectangle in \eqref{rect} is called the envelope of the box $(m,n)$.

Additionally, we observe  that the constant  $\alpha$ in equation \eqref{MSd} is as in \eqref{rightmostedgespeed}.

 The following proposition shows that we can construct  $\Psi$ with sufficiently small closure. Its proof can be found in \cite{MountfordSweet}.
\begin{proposition}\label{Mountford-Sweet}
There exist $k$ and $\hat{K}$ with the property that, for any $\delta>0$ there is $\hat{N}_0$ such that the law of $\Psi$ is a $k$-dependent percolation system with closure under $\delta$ for all $\hat{N}>\hat{N}_0$.
\end{proposition}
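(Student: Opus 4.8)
The plan is to reconstruct the estimates of \cite{MountfordSweet} that establish the closure property of the field $\Psi$ defined above (the adaptation to $R>1$ being routine), fixing the block parameter $\hat{K}$ --- and with it the dependence range $k$ --- first, and only afterwards, given $\delta$, choosing $\hat{N}_0$. The observation that organizes the whole proof is that the auxiliary field $\Phi$ is rigged so that the ``closed'' event of $\Psi$ carries no combinatorial content: by construction $\Phi(m,n)=2$ exactly when \eqref{Msa'} fails, so $\Psi(m,n)=0$ forces \eqref{Msa'} to hold \emph{and} one of \eqref{MSa}--\eqref{MSd} to fail (at $n=0$, where \eqref{Msa'} is absent, one of \eqref{MSa0}--\eqref{MSd0} fails); in particular the ``structural'' closures in which both parents of $(m,n)$ are dead do not occur, which is precisely what makes a closure bound depending only on $\mathcal{F}_{n-1}$ attainable. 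Writing $G(m,n)$ for the good event \eqref{MSa}$\cap$\eqref{MSb}$\cap$\eqref{MSc}$\cap$\eqref{MSd}, we have $\{\Psi(m,n)=0\}\subseteq G(m,n)^{c}$, so it is enough to prove
\[
\hat{\mathbb{P}}\left(\bigcap_{i=1}^{r}G(m_i,n)^{c}\ \Big|\ \mathcal{F}_{n-1}\right)<\delta^{r},\qquad (m_i,n)\in\Lambda,\ |m_i-m_j|>2k.
\]

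For the $k$-dependence I would condition on $\mathcal{G}_n$, the $\sigma$-algebra generated by $\mathcal{H}$ up to time $\hat{K}\hat{N}n$; this refines $\mathcal{F}_{n-1}$, it determines all $\Phi(\cdot,j)$ with $j\le n-1$ (hence \eqref{Msa'}) and the configuration $\eta^{\textbf{1}}_{\hat{K}\hat{N}n}$, while leaving the marks of $\mathcal{H}$ in the slab $[\hat{K}\hat{N}n,\hat{K}\hat{N}(n+1)]$ fresh. Finite speed of propagation --- a crude Poissonian count of arrows, sharpened by \eqref{rightmostedgespeed} --- gives that, with probability $1-e^{-c\hat{N}}$, no path inside a slab of length $\hat{K}\hat{N}$ moves by more than $2\alpha\hat{K}\hat{N}$; this is exactly what \eqref{MSd} asks, and it also confines every path tested in \eqref{MSa}--\eqref{MSc} to the envelope \eqref{rect}, whose horizontal width is of order $\alpha\hat{K}\hat{N}$. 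Choosing $k=k(\hat{K},\alpha)$ so that two boxes at horizontal distance $>2k$ have disjoint envelopes, the events $G(m_i,n)$ become, on their own occurrence and given $\mathcal{G}_n$, measurable with respect to the fresh slab inside the respective disjoint envelopes together with $\eta^{\textbf{1}}_{\hat{K}\hat{N}n}$ restricted there; hence, up to the exponentially rare, non-local event that some path escapes its envelope (absorbed by a union bound), they are conditionally independent given $\mathcal{G}_n$. Averaging the resulting product bound over $\mathcal{G}_n\supseteq\mathcal{F}_{n-1}$ then reduces the claim to $\hat{\mathbb{P}}(G(m,n)^{c}\mid\mathcal{G}_n)<\delta$ uniformly, and this uniformity is available because $\eta^{\textbf{1}}_{\hat{K}\hat{N}n}$ stochastically dominates the upper invariant measure no matter what the conditioning is.

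It remains to bound, for fixed $\hat{K}$ and $\hat{N}\to\infty$, the probability that each of \eqref{MSa}--\eqref{MSd} fails. For \eqref{MSd} this is the edge large-deviation estimate already used, of order $e^{-c\hat{N}}$. For \eqref{MSa}, the contact process from full occupancy dominates the upper invariant measure at every time, under which an interval of length $\sqrt{\hat{N}}$ is empty with probability at most $e^{-c\sqrt{\hat{N}}}$, so a union bound over the $O(\sqrt{\hat{N}})$ relevant intervals gives $O(\sqrt{\hat{N}}\,e^{-c\sqrt{\hat{N}}})$. The substantial conditions are \eqref{MSb} and \eqref{MSc}: one must show that any infection path reaching the top of the box, or a point of the side rectangle $J^{\hat{N},\hat{K}}_{(m,n)}$, from $\mathbb{Z}\times\{\hat{K}\hat{N}n\}$ can be re-routed so as to start from the bottom slab $I^{\hat{N},\hat{K}}_{(m,n)}$. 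The confinement to the envelope (via \eqref{MSd}) together with the density provided by \eqref{MSa} --- which guarantees $\eta^{\textbf{1}}_{\hat{K}\hat{N}n}$ occupies a site in every $\sqrt{\hat{N}}$-window inside the envelope --- lets one, with high probability, graft such a path onto a point of $I^{\hat{N},\hat{K}}_{(m,n)}$ through a bounded-length detour inside a single renormalized box. Making this path surgery quantitative, with failure probability tending to $0$ in $\hat{N}$ and uniformly over the tested space--time points, is the delicate step and is the core of the argument of \cite{MountfordSweet}; I expect it to be the main obstacle. Once the four per-box estimates and the envelope localization are in hand, the assembly above delivers closure below $\delta$ for every $\hat{N}>\hat{N}_0(\delta)$, with $k$ and $\hat{K}$ chosen beforehand, which is the assertion.
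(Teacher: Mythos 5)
The paper does not prove Proposition~\ref{Mountford-Sweet}: it points to \cite{MountfordSweet} and uses the result as a black box, so there is no internal proof to compare against. What can be assessed is whether your reconstruction is a sound outline of that external argument.

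Your sketch has the right architecture: condition on the Harris marks up to time $\hat K\hat N n$ (which refines $\mathcal F_{n-1}$), invoke the edge-speed bound behind \eqref{MSd} to confine the relevant paths to envelopes, choose $k$ so that boxes at renormalized separation $>2k$ have disjoint envelopes, and reduce to per-box estimates that vanish in $\hat N$ with $\hat K$ held fixed. You also correctly isolate \eqref{MSb}/\eqref{MSc} (the re-routing or ``grafting'' step) as the substantive difficulty and leave it as a named placeholder, which is reasonable since the paper itself does not reproduce it.

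There is, however, one genuine gap in the sketch as written: the phrase ``up to the exponentially rare, non-local event that some path escapes its envelope (absorbed by a union bound), they are conditionally independent.'' If you literally split off the escape events and control them by a union over the $r+1$ boxes, you obtain an additive term of order $r\,e^{-c\hat N}$, which for $r$ large exceeds $\delta^r$ no matter how large $\hat N_0$ is; the closure definition demands the bound $\delta^r$ for \emph{every} $r$, so an additive union bound cannot close the argument. The fix is structural rather than quantitative: one must cover $\{\Psi(m,n)=0\}$ by a single bad event $B(m,n)$ that is (a) measurable with respect to the fresh slab inside a fixed neighbourhood of the envelope of box $(m,n)$ and (b) of conditional probability $<\delta$. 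This requires replacing $\eqref{MSd}^c$ by a genuinely local surrogate --- say, the existence of a path inside a slightly enlarged envelope with horizontal span exceeding $2\alpha\hat K\hat N - O(\hat N)$ --- together with a last-entrance argument showing that this local event captures every escape. With such a local $B(m,n)$, the product bound $\prod_i\hat{\mathbb P}(B(m_i,n)\mid\mathcal G_n)<\delta^{r+1}$ is immediate and valid for all $r$, and averaging over $\mathcal G_n\supseteq\mathcal F_{n-1}$ finishes. Without this localization device your outline does not deliver the $k$-dependence claim uniformly in $r$.
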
Throughout the paper we fix 
\begin{itemize}
    \item[*]$k$ and $\hat{K}$ as in Proposition \ref{Mountford-Sweet};
    \item[*]$p_0$ as in Lemma $\ref{surveyD}$;
    \item[*]$\delta=\delta(k,p_0)$ as in Lemma \ref{LiggettSmallClosure};
    \item[*]$\hat{N}_0=\hat{N}_0(\delta,k,\hat{K})$ as in Proposition \ref{Mountford-Sweet};
    \item[*]$\hat{N}>\hat{N}_0$.
\end{itemize}
We note that these conditions imply that the law of  $\Psi$ is a $k$\emph{-dependent} percolation system with closure under $\delta$ and it is stochastically larger than $\hat{\pr}_{p_0}$.

Now, we are ready to introduce the definition  of an $N$-\emph{barrier}.
\begin{definition}\label{DefinitionN-barrier}
 Consider $M=M(N)=\left\lfloor \frac{2(N-2\alpha \hat{K}\hat{N})}{\hat{N}}\right\rfloor$ and $S=S(N)=\hat{K}\hat{N}M^{2}+2$. 
\begin{itemize}
\item[(a)] For $x\in[1,N]$ we say that $(x,0)$ is an $N$-barrier if  for all $y \in[1,N]$ such that $[1,N]\times\{0\}\rightarrow (y,S)$, we have that $(x,0)\rightarrow(y,S)$ inside $[1,N]$.

\item[(b)] For $x\in[-N+1,0]$ we say that $(x,0)$ is an $N$-barrier if  for all $y \in[-N+1,0]$ such that $[-N+1,0]\times\{0\}\rightarrow (y,S)$, we have that $(x,0)\rightarrow(y,S)$ inside $[-N+1,0]$. 
\item[(c)] For $x \in [-N+1,N]$ we say that a point $(x,t)$ is an $N$-\emph{barrier} if $(x,0)$ is an $N$-barrier in $\Theta_{(0,t)}(\mathcal{H})$.
\end{itemize}
\end{definition}
We note that, for large $N$, $M$ in Definition \ref{DefinitionN-barrier} is the largest $m$ such that the envelope of the box $(m,0)$ is a subset of $[1,N]\times[0,\infty)$.  

Our next step is to prove that, for $N$ large enough,  the probability of a point $(x,0)$ to be an $N$-\emph{barrier} is uniformly  bounded away from zero (Proposition \ref{N-barrier} below). To this end, we first introduce some notations.

Define the following sets
\begin{align}\label{Prop 0 eq0}
\begin{split}
&A_1=\left[1,\frac{\imath\hat{N}}{2}-\frac{\hat{N}}{2}\right),\quad A_2=\left[\frac{\imath\hat{N}}{2}-\frac{\hat{N}}{2},\frac{M\hat{N}}{2}+\frac{\hat{N}}{2}\right],\\
&A_3=\left(\frac{M\hat{N}}{2}+\frac{\hat{N}}{2},N\right],
\end{split}
\end{align}
where
\begin{equation}\label{imath(m)}
		    \imath=\imath(N)=\left\lbrace \begin{array}{ll}
		    \lfloor 4\alpha \hat{K}\hat{N} \rfloor & \text{ if } M^2+\lfloor 4\alpha \hat{K}\hat{N} \rfloor \text{ is even},\\
		    \lfloor 4\alpha \hat{K}\hat{N} \rfloor+1 & \text{ if }M^2+\lfloor 4\alpha \hat{K}\hat{N}\rfloor\text{ is odd},\end{array}\right.
		\end{equation}
and observe that the collection $\{A_i\}_{\{i=1,2,3\}}$ is a partition of the interval $[1,N]$.

 Now, for $x\in A_2$ and $j$ such that  $x\in\mathcal{I}^{\hat{N}}_{j}$ define
\begin{itemize}
%
\item[$E_1$=] $\{ \forall z \in  \mathcal{I}^{\hat{N}}_{j-1}\cup\mathcal{I}^{\hat{N}}_{j}\cup\mathcal{I}^{\hat{N}}_{j+1},\hspace{0.2cm}(x,0)\rightarrow(z,1)\text{ inside }\mathcal{I}^{\hat{N}}_{j-1}\cup\mathcal{I}^{\hat{N}}_{j}\cup\mathcal{I}^{\hat{N}}_{j+1}\times[0,1]\}$; 
\item[$E_2$=] $\{\Psi(\Theta_{(0,1)}(\mathcal{H}))\in \Gamma_M(j)\}$;
\item[$E_3$=] \begin{small}
\begin{equation*}\bigcap_{x,y \in A_1 \cup A_3} \{P^{x\rightarrow y}\cap (S-1,S]=\emptyset; \hspace{0.2cm}P^{y\rightarrow x}\cap (S-1,S]=\emptyset;
\hspace{0.2cm} P^{x}\cap(S-1,S]\neq\emptyset\};
\end{equation*} 
\end{small}
\end{itemize}
where $\Gamma_M$ is defined as follows
$$\begin{small}\Gamma_M(j)=\left\lbrace(j,0)\rightsquigarrow (\imath,M^2)\text{ and }(j,0)\rightsquigarrow(M,M^2) \text{ inside }\Lambda\cap([\imath,M]\times[0,M^2])  \right\rbrace\end{small}.$$

 We use Figure \ref{NBarrier} below to describe the event $\cap^{3}_{i=1}E_i$. Event $E_1$ guarantees that  every point in $\mathcal{I}^{\hat{N}}_{j-1}\cup\mathcal{I}^{\hat{N}}_{j}\cup\mathcal{I}^{\hat{N}}_{j+1}$ is connected with $(x,0)$ inside $\mathcal{I}^{\hat{N}}_{j-1}\cup\mathcal{I}^{\hat{N}}_{j}\cup\mathcal{I}^{\hat{N}}_{j+1}\times[0,1]$. In the right corner of Figure \ref{NBarrier} we represent an example of how event $E_1$ can occur. Event $E_2$ implies that there are two renormalized paths connecting the box $(j,0)$ with the boxes $(\imath,M^2)$ and $(M,M^2)$. These renormalized paths are represented in the figure by the red connected structure  $B_M$ (we  define $B_M$ formally  in equation \eqref{B_M} below). Finally, event $E_3$ ensures  that there are no particles in the regions $A_1\times[S-1,S]$ and $A_3\times[S-1,S]$ and this event is represented in the figure with two gray rectangles at the top. 

In the next proposition we prove that  for all configuration in $\cap^{3}_{i=1}E_i$, $(x,0)$ is an $N$-\emph{barrier}. 

\begin{figure}[ht!]
\begin{center}
  \begin{overpic}[scale=1.0,unit=1mm]{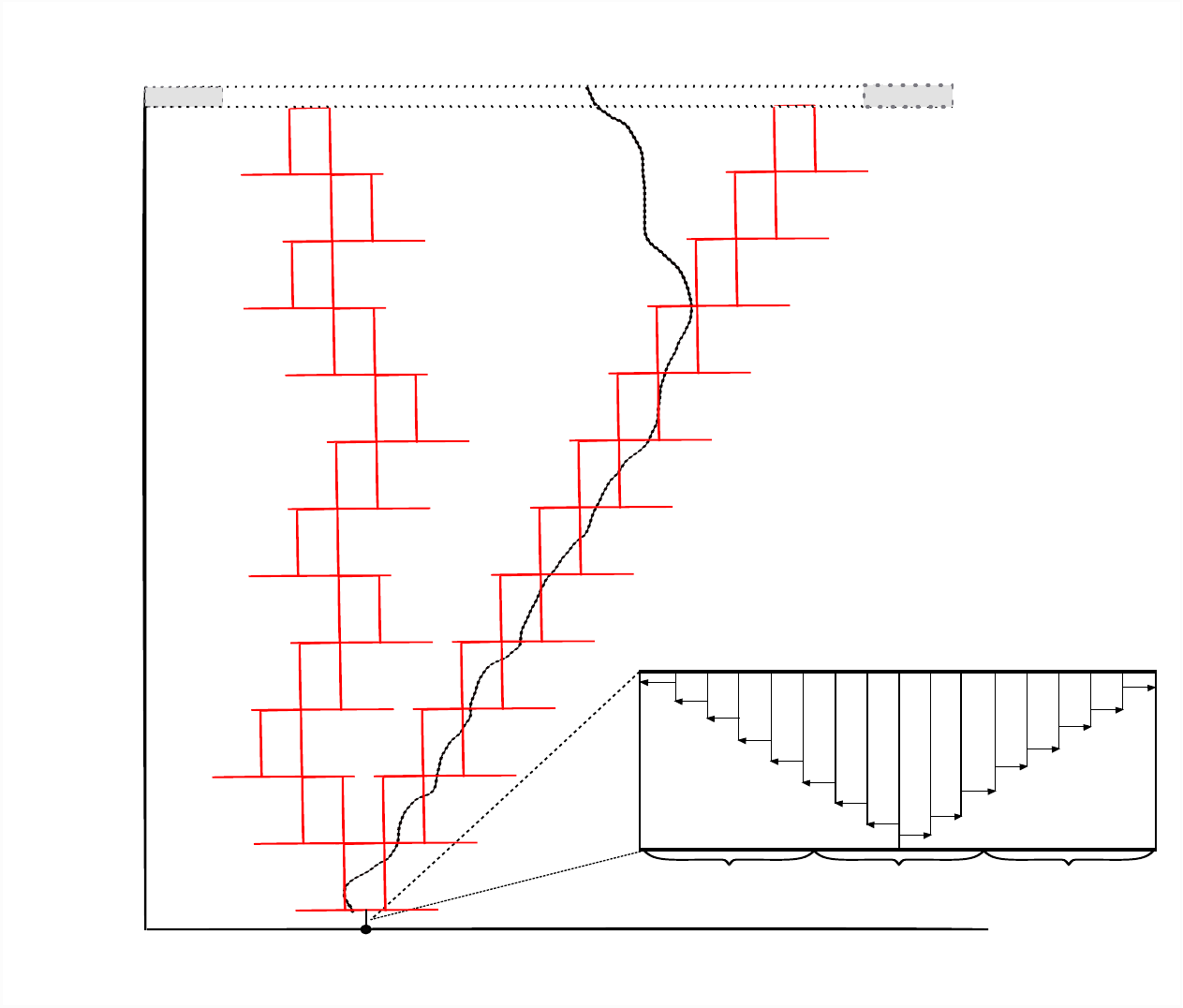}
   \put(18,101){\parbox{0.4\linewidth}{%
\begin{tiny}
$A_1$
\end{tiny}}}
\put(55,101){\parbox{0.4\linewidth}{%
\begin{tiny}
$A_2$
\end{tiny}}}
\put(95,101){\parbox{0.4\linewidth}{%
\begin{tiny}
$A_3$
\end{tiny}}}
\put(13,100){\parbox{0.4\linewidth}{%
\begin{tiny}
$S$
\end{tiny}}}
\put(22,92){\parbox{0.4\linewidth}{%
\begin{tiny}
$(\imath,M^{2})$
\end{tiny}}}
\put(45,42){\parbox{0.4\linewidth}{%
\color{red}{$B_M$}}}

\put(89.5,92){\parbox{0.4\linewidth}{%
\begin{tiny}
$(M,M^{2})$
\end{tiny}}}
 \put(31,11){\parbox{0.4\linewidth}{%
\begin{tiny}
$(j,0)$
\end{tiny}}}   
\put(36,6){\parbox{0.4\linewidth}{%
\begin{tiny}
$(x,0)$
\end{tiny}}}
\put(76, 12){\parbox{0.4\linewidth}{%
\begin{tiny}
$I^{\hat{N},\hat{K}}_{(-2,0)}$
\end{tiny}}}
\put(94.5, 12){\parbox{0.4\linewidth}{%
\begin{tiny}
$I^{\hat{N},\hat{K}}_{(0,0)}$
\end{tiny}}}
\put(112.5, 12){\parbox{0.4\linewidth}{%
\begin{tiny}
$I^{\hat{N},\hat{K}}_{(2,0)}$
\end{tiny}}}
\end{overpic}
\end{center}
\caption{Respresentation of the event $\cap^{3}_{i=1}E_i$.}\label{NBarrier}
\end{figure}

\begin{proposition}\label{N-barrier}
		 There	exists $\hat{\eta}=\hat{\eta}(\lambda)>0$ such that for all $N$ large enough 
\begin{equation}\label{claim 1 eq1}
    \pr^{x}((x,0)\text{ is an } N\text{-barrier})>\hat{\eta},
\end{equation}	
for any   $x\in\left[\frac{-M\hat{N}}{2}-\frac{\hat{N}}{2},\frac{-\imath\hat{N}}{2}+\frac{\hat{N}}{2}\right]\cup \left[\frac{\imath\hat{N}}{2}-\frac{\hat{N}}{2},\frac{M\hat{N}}{2}+\frac{\hat{N}}{2}\right]$.
\end{proposition}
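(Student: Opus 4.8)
The plan is to establish two facts: first, that the event $\bigcap_{i=1}^{3}E_{i}$ is contained in $\{(x,0)\text{ is an }N\text{-barrier}\}$; second, that $\pr^{x}(\bigcap_{i=1}^{3}E_{i})\ge\hat{\eta}$ for a positive constant $\hat{\eta}$ independent of $N$ (large) and of $x$ in the stated range. Since the Harris construction, the Mountford--Sweet renormalization and all the sets involved are invariant under reflection through the origin, it suffices to treat $x$ in the right-hand interval $A_{2}=[\imath\hat{N}/2-\hat{N}/2,\,M\hat{N}/2+\hat{N}/2]$ and barriers of type (a); type (b) and the left-hand interval follow by reflecting.

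For the inclusion, fix such an $x$ and let $j$ be the index (of the parity making $(j,0)\in\Lambda$) with $x\in\mathcal{I}^{\hat{N}}_{j}$, so that $\imath\le j\le M$. On $E_{1}$ the point $(x,0)$ is connected, inside $(\mathcal{I}^{\hat{N}}_{j-1}\cup\mathcal{I}^{\hat{N}}_{j}\cup\mathcal{I}^{\hat{N}}_{j+1})\times[0,1]$, to every site of $\mathcal{I}^{\hat{N}}_{j-1}\cup\mathcal{I}^{\hat{N}}_{j}\cup\mathcal{I}^{\hat{N}}_{j+1}$ at time $1$, hence to the whole base $I^{\hat{N},\hat{K}}_{(j,0)}$ of the box $(j,0)$ of the shifted construction $\Theta_{(0,1)}(\mathcal{H})$. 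On $E_{2}$ there are renormalized open paths from $(j,0)$ to $(\imath,M^{2})$ and to $(M,M^{2})$ inside $\Lambda\cap([\imath,M]\times[0,M^{2}])$, i.e.\ the structure $B_{M}$. I would then propagate the connection through the Mountford--Sweet funneling: conditions \eqref{MSb0}--\eqref{MSd0} and \eqref{MSb}--\eqref{MSd} ensure that whatever the global process reaches at the top of, or inside, a box whose renormalized site is open is already reached from the base of that box, while \eqref{MSd0} and \eqref{MSd} confine all such paths to the box's envelope. Since by the choice of $M$ the envelope of every box $(m,n)$ with $\imath\le m\le M$ and $0\le n\le M^{2}$ lies in $[1,N]\times[0,\infty)$, chaining the funneling along the two open paths of $B_{M}$ gives that $(x,0)$ is connected inside $[1,N]$, up to time $S-1$, to every point $(z,S-1)$ with $\mathbb{Z}\times\{0\}\rightarrow(z,S-1)$ and $z$ in the span of $B_{M}$ across $A_{2}$ --- in particular to sites near both endpoints $\imath\hat{N}/2$ and $M\hat{N}/2$ of $A_{2}$.

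To finish the inclusion, let $y\in[1,N]$ with $[1,N]\times\{0\}\rightarrow(y,S)$, realized by a path $\gamma$. On $E_{3}$ every site of $A_{1}\cup A_{3}$ carries a death mark in $(S-1,S]$ and no arrow joins two sites of $A_{1}\cup A_{3}$ during $(S-1,S]$, so $A_{1}\cup A_{3}$ is sealed off from itself in the last time unit; running $\gamma$ backwards from $(y,S)$ one concludes that $\gamma$ must be at some $z\in A_{2}$ at time $S-1$ with $\mathbb{Z}\times\{0\}\rightarrow(z,S-1)$, and that the portion of $\gamma$ on $[S-1,S]$ may be taken inside $[1,N]$. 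Combining with the previous paragraph, $(x,0)\rightarrow(z,S-1)$ inside $[1,N]$, hence $(x,0)\rightarrow(y,S)$ inside $[1,N]$; as $y$ was arbitrary, $(x,0)$ is an $N$-barrier on $\bigcap_{i=1}^{3}E_{i}$. I expect this graphical step --- turning ``$B_{M}$ is open'' into ``$(x,0)$ dominates the full descendancy of $[1,N]$ up to time $S$'' via the funneling conditions together with the sealing from $E_{3}$ --- to be the main obstacle, since it requires careful path-chasing.

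It remains to bound $\pr^{x}(\bigcap_{i=1}^{3}E_{i})$ from below. The event $E_{1}$ is a connectivity event in a space-time region of size $O(\hat{N})\times O(1)$, so $\pr(E_{1})\ge c_{1}>0$ with $c_{1}$ depending only on $\lambda,R,\hat{N}$. The event $E_{3}$ imposes finitely many Poisson constraints over $(S-1,S]$ on the sites of $A_{1}\cup A_{3}$, and $|A_{1}\cup A_{3}|$ is bounded uniformly in $N$ (it depends only on $\hat{N},\hat{K},\alpha$), so $\pr(E_{3})\ge c_{3}>0$. For $E_{2}$, the law of $\Psi$ for $\Theta_{(0,1)}(\mathcal{H})$ is a $k$-dependent percolation with closure under $\delta$, hence by Lemma \ref{LiggettSmallClosure} stochastically dominates $\hat{\mathbb{P}}_{p_{0}}$; since $\Gamma_{M}(j)$ is increasing, $\pr(E_{2})\ge\hat{\mathbb{P}}_{p_{0}}(\Gamma_{M}(j))$, and the parity of $\imath$ was fixed precisely so that $(j,0),(\imath,M^{2}),(M,M^{2})\in\Lambda$, so the FKG inequality together with Lemma \ref{surveyD} yields $\hat{\mathbb{P}}_{p_{0}}(\Gamma_{M}(j))\ge\epsilon_{1}^{2}$ once $M=M(N)$, hence $N$, is large enough. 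Finally $E_{1}$, $E_{2}$, $E_{3}$ are measurable with respect to the restrictions of $\mathcal{H}$ to time intervals that are disjoint up to their endpoints --- $[0,1]$, $[1,S-1]$ and $(S-1,S]$, after the harmless adjustment of requiring in $E_{2}$ only that the bases of the level-$M^{2}$ boxes be reached --- hence independent, and $\pr^{x}(\bigcap_{i=1}^{3}E_{i})\ge c_{1}\,\epsilon_{1}^{2}\,c_{3}=:\hat{\eta}>0$ uniformly, as required.
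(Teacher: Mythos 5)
Your argument follows the paper's route essentially verbatim: the same three events $E_1,E_2,E_3$, the same funneling of paths through the renormalized structure $B_M$ using the Mountford--Sweet conditions \eqref{MSb}--\eqref{MSd} and the envelope containment, the same sealing of $A_1\cup A_3$ via $E_3$, and the same product lower bound $\hat\eta=\tilde p\,\epsilon_1^{2}\,\beta$ obtained from FKG, Lemma \ref{surveyD}, Lemma \ref{LiggettSmallClosure} and the Markov property. Your remark that $E_2$ should be read as requiring only that the bases of the level-$M^2$ boxes be reached, so that $E_1,E_2,E_3$ genuinely depend on disjoint time slabs $[0,1]$, $[1,S-1]$, $(S-1,S]$, is a small but welcome tightening of the independence step that the paper asserts without comment.
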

	\begin{proof}
The case $R=1$ was discussed at the beginning of this section. We only need to  observe that for $N$ large enough $\frac{3}{2\hat{N}}N\leq M\leq \frac{2}{\hat{N}}N$ and $\hat{K}\frac{9}{4\hat{N}}N^2\leq S\leq\hat{K}\frac{4}{\hat{N}}N^2$. Therefore, for $D=\hat{K}\frac{9}{4\hat{N}}$, $DN^{2}\leq S\leq 2DN^{2}$ 
 and equation  \eqref{N-barrierD=1} implies  \eqref{claim 1 eq1}.

 The case $R>1$ is more complicated because the process does not have the path crossing property. Let us prove this case.
 
 For a configuration in $E_2$ there exist  sequences $\{m_k\}_{0\leq k\leq M^2}$ and $\{\hat{m}_k\}_{0\leq k\leq M^2}$, subsets of $\{\imath,\dots,M\}$, such that
$$m_0=\hat{m}_0=j,\quad m_{M^2}=\imath, \quad\hat{m}_{M^2}=M,$$ 
and
$$
\begin{array}{c}
\Psi(\Theta_{(0,1)}(\mathcal{H}))(m_k,k)=1\quad \text{   and   }\quad |m_{k+1}-m_{k}|=1 \hspace{0.2cm}\forall\hspace{0.2cm} k \hspace{0.2cm}\in \{0,\dots,M^{2}\},\\\Psi(\Theta_{(0,1)}(\mathcal{H}))(\hat{m}_k,k)=1\quad \text{   and   }\quad|\hat{m}_{k+1}-\hat{m}_{k}|=1\hspace{0.2cm} \forall \hspace{0.2cm} k\hspace{0.2cm}\in \{0,\dots,M^{2}\}.
\end{array}
$$
Denote 
\begin{equation}\label{B_M}
 B_{M}=\underset{0\leq k \leq M^{2}}{\bigcup}I^{\hat{K},\hat{N}}_{(m_k,k)}\cup J^{\hat{K},\hat{N}}_{(m_k,k)}\cup I^{\hat{K},\hat{N}}_{(\hat{m}_k,k)}\cup J^{\hat{K},\hat{N}}_{(\hat{m}_k,k)}.
 \end{equation}

From properties \eqref{MSb} and \eqref{MSc} in the definition of $\Psi$, it  follows that, in the trajectory of the classical contact process $t\mapsto \eta_t(\Theta_{(0,1)}(\mathcal{H}))$, every occupied site in $B_{M}$  descends from $I^{\hat{K},\hat{N}}_{(j,0)}$.  By our choice of $N$ and  $\imath$, we have that 
$$\left[ \frac{m_k \hat{N}}{2}-2\alpha \hat{K}\hat{N}, \frac{m_k\hat{N}}{2}+2\alpha \hat{K}\hat{N} \right]\subset[1,N]\text{ for all }k.$$ 
Therefore, the envelopes of the renormalized sites $(m_k,k)$ and $(\hat{m}_k,k)$ are subsets of $[1,N]\times[0,\infty)$ for all $k$. Using property \eqref{MSd} of the Mountford-Sweet renormalization, we have that every occupied site in $B_{M}$ is connected with $I^{\hat{K},\hat{N}}_{(j,0)}$  by a path entirely contained in $[1,N]\times [0,\infty)$.

We observe that $B_M$ is a connected union of $M^{2}$ segments of length $\hat{N}$ with rectangles of width $2R$ and height $\hat{K}\hat{N}$. Therefore, in $\Theta_{(0,1)}(\mathcal{H})$, at time $\hat{K}\hat{N}M^{2}=S-2$ every occupied site in $A_2$ is connected with $\mathbb{Z}\times\{0\}$  by a path that intersects the structure $B_M$ and remains in $[1,N]\times[0,\infty)$ afterward. Since every point in $B_M$ is connected with $I^{\hat{K},\hat{N}}_{(j,0)}$ inside $[1,N]$, we also can connect every point in $A_2\times\{S-2\}$ with $I^{\hat{K},\hat{N}}_{(j,0)}$ inside $[1,N]\times[0,\infty)$, in the construction $\Theta_{(0,1)}(\mathcal{H})$.
 
 The  event $E_1$ implies that every point in $\mathcal{I}^{\hat{N}}_{j}\times\{1\}$ is connected with $(x,0)$ in $\mathcal{H}$. Since $\mathcal{I}^{\hat{N}}_{j}\times\{1\}$ is the base of $I^{\hat{K},\hat{N}}_{(j,0)}$, we  have that  for all  $y \in A_2$ such that $\mathbb{Z}\times\{0\}\rightarrow(y,S-1)$,  $(x,0)\rightarrow(y,S-1)$ inside $[1,N]$.

Finally, for any realization in  $E_3$  there  is no mark of infection  in  the regions $A_1\times[S-1,S]$ and $A_3\times[S-1,S]$, and also there is no mark of infection going out or coming in these regions.  In particular, for any initial configuration at time $S$ there is no particle alive in $A_1\cup A_3$, and during the interval of time $[S-1,S]$ there is no interaction with any exterior region. Therefore,  every occupied site at time $S$ is connected with $A_2 \times\{S-1\}$ inside $A_2$ and we can conclude that every occupied site  in $[1,N]$ at time $S$ is connected with $(x,0)$ inside $[1,N]$, which is the definition of $N$-barrier.

Now we proceed to prove that the probability of $\cap^3_{i=1} E_i$ is positive. It is trivial that we can take $\tilde p>0$ independent of $N$ such that $P(E_1)\geq \tilde p$.	
Since the event $\{\Psi(\Theta_{(0,1)}(\mathcal{H}))\in \Gamma_M(j)\}$ depends on the Harris construction restricted to $\mathbb{Z}\times[1,S-1)$, we have that it is independent of all the marks in $\mathbb{Z}\times[0,1)$.
	Let us prove  that the event  $\{\Psi(\Theta_{(0,1)}(\mathcal{H}))\in \Gamma_M(j)\}$ has a positive probability.
	
 Using the $FKG$-inequality and Lemma \ref{surveyD} we have that for all $M$ large enough 
$$
\hat{\pr}_{p_0}((j,0)\rightsquigarrow (\imath,M^2)\text{ and }(j,0)\rightsquigarrow(M-1,M^2) \text{ inside }[\imath,M])>{\epsilon_1}^{2}.
$$
By our choice of $\Psi$, the law of $\Psi$ is stochastically larger than $\hat{\pr}_{p_0}$, therefore
$$
	\pr(\Psi(\Theta_{(0,1)}(\mathcal{H}))\in \Gamma_M(j))>\epsilon^{2}_1.
	$$
On the other hand, the event  $E_3$  depends on marks in the region $\mathbb{Z}\times[S-1,S]$. Note that this event has probability 
\begin{equation}\label{Eq 1}
\pr(E_{3})=\left(\frac{1-e^{-(R\lambda +1)}}{R\lambda+1}\right)^{\frac{\imath\hat{N}}{2}-\frac{\hat{N}}{2}}\left(\frac{1-e^{-(R\lambda +1)}}{R\lambda+1}\right)^{N-\frac{M\hat{N}}{2}-\frac{\hat{N}}{2}}.
\end{equation}
Therefore, since $N-\frac{M\hat{N}}{2}-\frac{\hat{N}}{2}\leq 2\alpha\hat{ K}\hat{N} - \frac{\hat{N}}{2}$, we conclude that  $\pr(E_3)$ has a positive lower bound, say $\beta$, that does not depend on $N$. Thus, by the Markov property, \eqref{claim 1 eq1} holds for $\hat{\eta}=\Tilde{p}\epsilon^{2}_1\beta$.

Finally, we observe that for $x \in \left[\frac{-M\hat{N}}{2}-\frac{\hat{N}}{2},\frac{-\imath\hat{N}}{2}+\frac{\hat{N}}{2}\right]$ the proof is analogous. 
\end{proof}

\section{Regeneration for the classical contact process}\label{Section4}
In this section, we present a result about the metastability for the classical contact process in dimension $1$ with range $R\geq1$, which will be called the \emph{regeneration} property. This property was introduced in  \cite{Mountford} for the classical contact process in dimension $2$.  Roughly, we say that a contact process regenerates if, with probability close to one, either  the process beginning with a fixed initial configuration is the empty set at a certain time $a_N$ or at this time the infected
sites are the same as for the process that begins with full occupancy. In addition, the probability goes to one when $N$ goes to infinity uniformly with respect to the initial configuration, and $a_N$ is negligible compared with the extinction time. In the following proposition, we give a precise statement of the  regeneration property for the classical contact process in dimension $1$ and $R\geq 1$.
		\begin{proposition}\label{Prop0}
		There	exist sequences $a_N$ and $b_N$ that satisfy 
			\begin{center}
				\begin{itemize}
				
					\item[(i)]There is $c>0$ such that 
					\begin{equation}\label{Prop0itemi}
					\underset{\xi_0\in{\{0,1\}}^{[1,N]}}{\sup}\pr(\xi^{\textbf{1},N}_{a_N}\neq\xi^{\xi_0,N}_{a_{N}} ;T^{\xi_0}_{N}>a_N)\leq c^{N},
					\end{equation} for $N$ large enough. 
				\item[(ii)] $\frac{b_N}{a_N}\rightarrow\infty$.
					\item[(iii)] $\underset{N\rightarrow \infty}{\lim} \pr(T^{\textbf{1}}_N<b_N)=0.$
				\end{itemize}
			\end{center}
			In particular, we have that $a_N=(\hat{K}\hat{N}M^{2}+3)N$ and $b_N=e^{\frac{c_{\infty}}{2}N}$, for a constant  $c_{\infty}>0$.
		\end{proposition}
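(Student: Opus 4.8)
The plan is to build the regeneration property directly on top of the $N$-barrier technology from Proposition \ref{N-barrier}. The key point is that a single $N$-barrier at a given time forces the contact process started from \emph{any} nonempty configuration (supported in $[1,N]$) to coincide, at the time just after the barrier, with the process started from the full box — provided the process has not died yet. So I would first record this deterministic consequence: if $(x,0)$ is an $N$-barrier (in the sense of Definition \ref{DefinitionN-barrier}(a)), and $\xi_0\subset[1,N]$ is such that $T^{\xi_0}_N>S$, then $\xi^{\xi_0,N}_S=\xi^{\mathbf 1,N}_S$. Indeed, the barrier condition says that every $(y,S)$ reachable from $[1,N]\times\{0\}$ inside $[1,N]$ is reachable from $(x,0)$ inside $[1,N]$; since $T^{\xi_0}_N>S$ there is some occupied site at time $S$ in the $\xi_0$-process, and by monotonicity and the definition of $N$-barrier every such site, and in fact every site occupied in the full-box process, is reached from $(x,0)$, hence from $\xi_0$. (One has to be slightly careful: the barrier is a point $(x,0)$ with $x\in[1,N]$, and we need $x$ itself to survive; this is exactly why a single barrier event suffices to collapse all configurations onto the full-box process once we intersect with $\{T^{\xi_0}_N>S\}$ — a standard argument, cf. the treatment of descendancy barriers in \cite{Conos}.)

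Next I would set $a_N=(\hat K\hat N M^2+3)N=(S+1)N$ and iterate: partition $[0,a_N]$ into $N$ consecutive time-blocks of length $S+1$, and on the $\ell$-th block consider the event $\mathcal B_\ell$ that $(x_\ell,\cdot)$ is an $N$-barrier in the shifted Harris construction $\Theta_{(0,\ell(S+1))}(\mathcal H)$, for a suitable $x_\ell$. By Proposition \ref{N-barrier} each $\mathcal B_\ell$ has probability at least $\hat\eta>0$, uniformly in $N$; the blocks use disjoint pieces of $\mathcal H$ (the barrier event on a block of length $S+1=\hat K\hat N M^2+3$ depends only on the Harris construction in that time-window, as is visible from the events $E_1,E_2,E_3$ in the proof of Proposition \ref{N-barrier}), so the $\mathcal B_\ell$ are independent. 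Hence the probability that \emph{no} barrier occurs in $[0,a_N]$ is at most $(1-\hat\eta)^N$. If at least one barrier occurs, say on block $\ell^\star$, then at time $(\ell^\star+1)(S+1)\le a_N$ the $\xi_0$-process (if still alive) has collapsed onto the full-box process by the previous paragraph, and by the Markov property / attractiveness the two processes remain equal for all later times, in particular at $a_N$. This gives \eqref{Prop0itemi} with $c=1-\hat\eta<1$ (absorbing, if necessary, a further constant power into $c$).

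For parts (ii) and (iii): take $b_N=e^{c_\infty N/2}$, where $c_\infty>0$ is the constant furnished by Theorem \ref{Theorem1} (equivalently, by the known exponential-survival estimate for the supercritical classical contact process in $[1,N]$; cf. \cite{Schonmman-Durrett, Mountford} and the one-dimensional analogue). Since $a_N$ is polynomial in $N$ and $b_N$ is exponential, $b_N/a_N\to\infty$, giving (ii). For (iii), the lower bound on the extinction time of the supercritical contact process restricted to $[1,N]$ started from full occupancy states that $\pr(T^{\mathbf 1}_N<e^{cN})\to0$ for a suitable $c>0$; choosing $c_\infty$ below $2c$ makes $b_N=e^{c_\infty N/2}$ smaller than $e^{cN}$ and yields $\pr(T^{\mathbf 1}_N<b_N)\to0$. (This is where I would cite the classical one-dimensional metastability results, adapted to range $R\ge1$ via the Mountford--Sweet renormalization already set up in Section \ref{Section3}; the constant $c_\infty$ is then the same one that appears in Theorem \ref{Theorem1}, which is consistent since both describe the exponential order of the survival time.)

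The main obstacle is the deterministic "collapse" step: one must argue cleanly that a barrier in a finite window, combined with mere survival of the $\xi_0$-process, forces equality with the full-box process, and that equality, once achieved, \emph{persists}. Persistence follows from attractiveness (the full-box process dominates every other process, so once they agree they agree forever, as long as one has set things up with a common Harris construction), but the step "survival $\Rightarrow$ every alive site is a barrier-descendant" needs the precise form of Definition \ref{DefinitionN-barrier}(a) and a short path-surgery argument; this is the only place where the range $R>1$ subtlety (no path-crossing) could bite, and it is handled exactly because the $N$-barrier was \emph{designed} in Section \ref{Section3} to encode the needed connectivity without path-crossing. Everything else — the independence of blocks, the geometric bound $(1-\hat\eta)^N$, and the separation of scales $a_N\ll b_N$ — is routine.
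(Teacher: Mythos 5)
Your overall architecture — iterate a constant-probability barrier event over $N$ time-blocks of length $S+1$, then deduce collapse onto the full-box process — is the right one and matches the paper. But the central deterministic step is stated incorrectly, and this is a genuine gap, not just a presentational issue.

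You claim: \emph{if $(x,0)$ is an $N$-barrier and $T^{\xi_0}_N>S$, then $\xi^{\xi_0,N}_S=\xi^{\mathbf{1},N}_S$.} This is false. The barrier property of $(x,0)$ tells you that every $y$ with $[1,N]\times\{0\}\rightarrow(y,S)$ satisfies $(x,0)\rightarrow(y,S)$ inside $[1,N]$; it gives $\xi^{\mathbf 1,N}_S\subset\xi^{x,N}_S$. To conclude $\xi^{\mathbf 1,N}_S\subset\xi^{\xi_0,N}_S$ you additionally need the $\xi_0$-process to pass through the barrier point $(x,0)$, i.e.\ $x\in\xi_0$ (or, at the appropriate shifted time, $\xi^{\xi_0,N}_t(x)=1$). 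Mere survival of the $\xi_0$-process up to time $S$ gives you $\xi^{\xi_0,N}_S\neq\emptyset$ and nothing about whether the barrier site was ever occupied. Your parenthetical ``we need $x$ itself to survive\ldots\ this is exactly why intersecting with $\{T^{\xi_0}_N>S\}$ suffices'' conflates these; intersecting with global survival does not localise a particle at the barrier site.

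The paper closes this gap by making the barrier event \emph{configuration-dependent}: the event $B_i$ asks for an $x\in A_2$ such that \emph{both} $\xi^{\xi_0,N}_{s_{i-1}+1}(x)=1$ \emph{and} $(x,s_{i-1}+1)$ is an $N$-barrier. The probability bound then has two factors: Proposition \ref{N-barrier} gives $\hat\eta$ for the barrier at a given occupied site, and a separate uniform lower bound $\tilde\eta$ (equation \eqref{Prop 0 eq*}) is needed for the $\xi_0$-process to put a particle into $A_2$ one unit of time later. This also destroys the independence you invoke: because $B_i$ depends on the trajectory of the $\xi_0$-process up to time $s_{i-1}+1$, the blocks are not independent pieces of $\mathcal H$, and one has to run the iteration via conditional probabilities and the Markov property (as in \eqref{Prop 0 eq1}) rather than by raw independence. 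With a fixed deterministic $x_\ell$ per block, as your $\mathcal B_\ell$ suggests, you would get independence but lose the collapse; with $x_\ell$ chosen from the configuration you get the collapse but lose independence. You cannot have both.

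Parts (ii) and (iii) are fine modulo phrasing: the constant $c_\infty$ should be taken from the classical-contact-process estimate \eqref{T_NConvergeenPR} (discussed in Remark \ref{Remark1}), not from Theorem \ref{Theorem1} which concerns the two-type process; but the separation of scales $a_N$ polynomial vs.\ $b_N$ exponential is correctly identified, and the persistence-once-equal step (by attractiveness and the common Harris construction) is handled correctly.
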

We restrict the proof of this proposition to the case $R>1$. The idea for the classical contact process nearest neighbor ($R=1$)  is the same, the  only difference is that in this case it is used  property  \eqref{N-barrierD=1} instead of the object  $N$-\emph{barrier}.	
\begin{proof}[Proof of Proposition \ref{Prop0}]
We start by proving item $(i)$. Fix $N$ large enough such that Proposition \ref{N-barrier} holds and consider $M=M(N)$ and $S=S(N)$ given in Definition \ref{DefinitionN-barrier}, $\imath=\imath(N)$ as in \eqref{imath(m)}  and the interval $A_2$ as in \eqref{Prop 0 eq0}. Also, for $i \in \mathbb{N}$ define
\begin{align*}
    B_{i}=\{\exists\hspace{0.1cm} x \in A_2: \xi^{\xi_0,N}_{s_{i-1}+1}(x)=1 \text{ and } (x,s_{i-1}+1)\hspace{0.1cm} \text{ is }N\text{-\emph{barrier}}\},
\end{align*}
where $s_i=(S+1)i$ and $s_0=0$. 

Now, observe that by the Markov property it holds
$$
\pr(\exists\hspace{0.1cm} x \in A_2 \text{ such that } \xi^{\xi_0,N}_{s_{i-1+1}}(x)=1|T^{\xi_0}_N>s_{i-1})\geq \underset{x\in[1,N]}{\min}\pr((x,0)\rightarrow A_2\times\{1\}).
$$
Furthermore, by the definitions of $A_2$ and $\imath$ we have that the left extreme of the interval $A_2$ is smaller than $(\lfloor4 \alpha \hat{K}\hat{N} \rfloor+1)\hat{N}/2 -\hat{N}/2$. Hence,  we can choose $\tilde{\eta}>0$ independent  of  $N$ such that
 \begin{equation}\label{Prop 0 eq*}
 \underset{x\in[1,N]}{\min}\pr((x,0)\rightarrow A_2\times\{1\})\geq\tilde{\eta}.
 \end{equation}
Moreover, for  any $\xi\in \{0,1\}^{[1,N]}$ such that $\xi \neq \emptyset$ we have that
\begin{align*}
&\pr(B_k;\xi^{\textbf{1},N}_{s_{k-1}}=\xi)=\sum \limits_{\tilde{\xi}\cap A_2\neq \emptyset}\pr(B_k;\xi^{\textbf{1},N}_{s_{k-1}}=\xi;\xi^{\textbf{1},N}_{s_{k-1}+1}=\tilde{\xi})\\
&=\sum \limits_{\tilde{\xi}\cap A_2\neq \emptyset}\pr(B_k;\xi^{\textbf{1},N}_{s_{k-1}}=\xi|\xi^{\textbf{1},N}_{s_{k-1}+1}=\tilde{\xi})\pr(\xi^{\textbf{1},N}_{s_{k-1}+1}=\tilde{\xi})\\
&=\sum \limits_{\tilde{\xi}\cap A_2\neq \emptyset}\pr(B_k|\xi^{\textbf{1},N}_{s_{k-1}+1}=\tilde{\xi})\pr(\xi^{\textbf{1},N}_{s_{k-1}}=\xi|\xi^{\textbf{1},N}_{s_{k-1}+1}=\tilde{\xi})\pr(\xi^{\textbf{1},N}_{s_{k-1}+1}=\tilde{\xi})\\
&\geq \hat{\eta}\pr(\xi^{\textbf{1},N}_{s_{k-1}+1}\cap A_2 \neq \emptyset;\xi^{\textbf{1},N}_{s_{k-1}}=\xi)\\
&=\hat{\eta}\pr(\xi^{\textbf{1},N}_{s_{k-1}+1}\cap A_2 \neq \emptyset|\xi^{\textbf{1},N}_{s_{k-1}}=\xi)\pr(\xi^{\textbf{1},N}_{s_{k-1}}=\xi)\geq \hat{\eta}\tilde{\eta}\pr(\xi^{\textbf{1},N}_{s_{k-1}}=\xi),
\end{align*}
where the third equality is by the Markov property, the first inequality uses \eqref{claim 1 eq1} and the second one uses \eqref{Prop 0 eq*}. Therefore
\begin{equation}\label{Prop 0 eq**}
\pr(B_k|\xi^{\textbf{1},N}_{s_{k-1}}=\xi)\geq \hat{\eta}\tilde{\eta}
\end{equation}
 and for $k\geq1$ we have that
\begin{align}\label{Prop 0 eq1}
\begin{split}
&\pr(\underset{i\leq k}{\cap}B_{i}^{c}\cap \{T^{\xi_0}_N>s_{k-1}\})\\
&=\sum\limits_{\xi\neq \emptyset}\pr(B_k^c;\underset{i\leq k-1}{\cap}B_{i}^{c}\cap \{T^{\xi_0}_N>s_{k-2}\}|\xi^{\textbf{1},N}_{s_{k-1}}=\xi)\pr(\xi^{\textbf{1},N}_{s_{k-1}}=\xi)\\
&=\sum\limits_{\xi\neq \emptyset}\pr(B_k^c|\xi^{\textbf{1},N}_{s_{k-1}}=\xi)\pr(\underset{i\leq k-1}{\cap}B_{i}^{c}|\xi^{\textbf{1},N}_{s_{k-1}}=\xi)\pr(\xi^{\textbf{1},N}_{s_{k-1}}=\xi)\\
&\leq (1-\hat{\eta}\tilde{\eta})\pr(\underset{i\leq k-1}{\cap}B_{i}^c\cap \{T^{\xi_0}_N>s_{k-2}\}),
\end{split}
\end{align}
%
where the second equality follows by the Markov property and the inequality uses \eqref{Prop 0 eq**}. Now, using \eqref{Prop 0 eq1} recursively  we obtain that
\begin{equation}\label{eq:1}
\pr(\underset{i\leq N}{\cap}B_{i}^{c}\cap \{T^{\xi_0}_N>s_{N-1}) \leq(1-\hat{\eta}\tilde{\eta})^{N}.
\end{equation}

To conclude the proof, we set $c=1-\hat{\eta}\tilde{\eta}$, $a_N=s_{N}$ and prove the following inclusion
\begin{equation}\label{eq:2}
\{\xi^{\textbf{1},N}_{a_{N}}\neq\xi^{\xi_0,N}_{a_{N}};T^{\xi_0}_N> a_{N}\}\subset\underset{i\leq N}{\bigcap}B_i^{c} \cap \{T^{\xi_0}_N> a_N\}.
\end{equation}
To do this, first observe that the inclusion \eqref{eq:2} is equivalent to 
\begin{equation}\label{eq:2*}
\underset{i\leq N}{\bigcup}B_i \cup \{T^{\xi_0}_N\leq a_N\}\subset\{\xi^{\textbf{1},N}_{a_{N}}=\xi^{\xi_0,N}_{a_{N}}\}\cup\{T^{\xi_0}_N\leq a_{N}\}.
\end{equation}
Moreover, observe that to obtain the inclusion \eqref{eq:2*} it is  sufficient to prove that
\begin{equation}\label{Prop 0 eq2}
\underset{i\leq N}{\bigcup}B_i \cap \{T^{\xi_0}_N> a_N\}\subset\{\xi^{\textbf{1},N}_{a_{N}}=\xi^{\xi_0,N}_{a_{N}}\}.
\end{equation}
Therefore, we will prove \eqref{Prop 0 eq2}, which yields \eqref{eq:2}. Take a realization in the event on the left member of \eqref{Prop 0 eq2} and take  $i$ such that there exists $x$ satisfying $\xi^{\xi_0,N}_{s_{i-1}+1}(x)=1$ and  $(x,s_{i-1}+1)$ is an $N$\emph{-barrier}. Then, by the definition of $N$-\emph{barrier}  we have
\begin{itemize}
    \item[]$\forall$  $y \in [1,N] $ such that $[1,N]\times \{s_{i-1}+1\}\rightarrow(y,s_{i})$, we have that $(x,s_{i-1}+1) \rightarrow(y,s_{i})$ inside $[1,N]$,
\end{itemize}
which implies that $\xi^{\textbf{1},N}_{s_i}=\xi^{\xi_0,N}_{s_i}$ and consequently the processes are equal at time $a_N$. From this, we deduce \eqref{Prop 0 eq2}. Item $(i)$ now follows from \eqref{eq:1} and  \eqref{Prop 0 eq2}.

For item $(iii)$ we use the next result: there exists $c_{\infty}>0$ such that
\begin{equation}\label{T_NConvergeenPR}
\underset{N\rightarrow \infty}{\lim}\frac{1}{N}\log T^{\textbf{1}}_N=c_{\infty} \text{ in Probability}.
\end{equation}
Clearly, by \eqref{T_NConvergeenPR} if we take $b_N=e^{\frac{c_{\infty}}{2}N}$ item $(iii)$ holds. We discuss the result \eqref{T_NConvergeenPR} in Remark \ref{Remark1} below. 

 Item $(ii)$ follows immediately from the choice of $a_N$ and $b_N$.

\end{proof}

\begin{remark}\label{Remark1}
For the nearest neighbor scenario it was shown in \emph{\cite{Durrett-Liu}} that for any $\epsilon>0$ 
 \begin{equation}\label{Prop 0 eq3}
\underset{N\rightarrow \infty}{\lim} \pr\left(\frac{1}{N} \log T^{\textbf{1}}_N >c_{\infty}+ \epsilon\right)=0,
 \end{equation}
 and in \emph{\cite{Schonmman-Durrett}} it was proved that
 \begin{equation}\label{Prop 0 eq4}
\underset{N\rightarrow \infty}{\lim} \pr\left(\frac{1}{N} \log T^{\textbf{1}}_N <c_{\infty}-\epsilon\right)=0. 
\end{equation}
 Clearly, these results imply \eqref{T_NConvergeenPR} for $R=1$. The proofs of \eqref{Prop 0 eq3} and \eqref{Prop 0 eq4} use the fact that there exists $\Hat{c}>0$ such that for all $t\geq 0$
\begin{equation}\label{Fact(i)Durrett}
\pr(t<T^{[1,N]}< \infty)\leq e^{-t \Hat{c}},
\end{equation}
which was proved in \emph{\cite{Durrett-Griffeath}}. Formula \eqref{Fact(i)Durrett} is obtained by the Peierls contour argument. 

When $R>1$, we can obtain \eqref{Fact(i)Durrett} using the same argument  except that   the renormalization used in the previous case is replaced by the Mountford-Sweet renormalization. The other steps of the proof of \eqref{Prop 0 eq3} and \eqref{Prop 0 eq4} for the  nearest neighbor case are also valid when $R>1$.

\end{remark}

Once we have the regeneration property, we can get the asymptotic exponentiality for $T^{\textbf{1}}_N/\mathbb{E}(T^{\textbf{1}}_N)$,  as in Proposition $(1.2)$ of \cite{Mountford}.
\begin{corollary}
$$
\underset{N\rightarrow \infty}{\lim}\frac{T^{\textbf{1}}_N}{\mathbb{E}(T^{\textbf{1}}_N)}=E \text{ in Distribution},
$$
where $E$ has exponential distribution with rate $1$.
\end{corollary}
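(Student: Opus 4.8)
The plan is to derive the corollary from the regeneration property of Proposition~\ref{Prop0}, following Proposition~$1.2$ of \cite{Mountford}: I will show that the survival function $G_N(t):=\pr(T^{\textbf{1}}_N>t)$ becomes, up to errors negligible on the scale $\mathbb{E}(T^{\textbf{1}}_N)$, submultiplicative---and, with more work, supermultiplicative---after a short time-shift, and then invoke the standard deterministic argument turning such estimates into convergence to an exponential law. Three quantitative remarks come first. (a) $t\mapsto\xi^{\textbf{1},N}_t$ is a finite Markov chain whose absorbing state $\emptyset$ is accessible from every configuration; more precisely, uniformly in its current state, with probability at least $e^{-\gamma N}$ (some $\gamma=\gamma(\lambda,R)$) the restricted process is empty one time unit later, so $T^{\textbf{1}}_N$ is dominated by $e^{\gamma N}$ times a geometric variable and $\mathbb{E}(T^{\textbf{1}}_N)\le e^{\gamma N}<\infty$. (b) By Proposition~\ref{Prop0}$(iii)$, $\pr(T^{\textbf{1}}_N\ge b_N)\to1$, hence $\mathbb{E}(T^{\textbf{1}}_N)\ge\tfrac12 b_N=\tfrac12 e^{c_\infty N/2}$ for $N$ large; thus $\mathbb{E}(T^{\textbf{1}}_N)$ grows exponentially in $N$. (c) The proof of Proposition~\ref{Prop0}$(i)$ yields the bound $(1-\hat\eta\tilde\eta)^{k}$ after $k$ regeneration attempts over a time interval $(S+1)k$; choosing $k=N^{2}$ and $a_N:=(S+1)N^{2}$---still polynomial in $N$ because $S=S(N)=O(N^{2})$---gives
\[
\sup_{\xi_0\in\{0,1\}^{[1,N]}}\pr\bigl(\xi^{\textbf{1},N}_{a_N}\neq\xi^{\xi_0,N}_{a_N};\,T^{\xi_0}_N>a_N\bigr)\le\varepsilon_N,
\]
with $\varepsilon_N$ super-exponentially small, while still $a_N/\mathbb{E}(T^{\textbf{1}}_N)\to0$.

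The key step is the approximate submultiplicativity. Fix $s,t\ge0$, let $\mathcal{F}_t$ be the $\sigma$-algebra generated by the marks of $\mathcal{H}$ in $\mathbb{Z}\times[0,t]$, and on $\{T^{\textbf{1}}_N>t\}$ set $\sigma:=\xi^{\textbf{1},N}_t\neq\emptyset$, an $\mathcal{F}_t$-measurable configuration. The shifted construction $\Theta_{(0,t)}(\mathcal{H})$ is independent of $\mathcal{F}_t$; in it run both $\xi^{\sigma,N}$ and a fresh full-occupancy process $\tilde\xi:=\xi^{\textbf{1},N}$ with extinction time $\tilde T$, so $\tilde T\perp\mathcal{F}_t$ and $\pr(\tilde T>\cdot)=G_N(\cdot)$. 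Using $\{T^{\textbf{1}}_N>t+a_N+s\}=\{T^{\textbf{1}}_N>t\}\cap\{T^{\sigma}_N>a_N+s\text{ in }\Theta_{(0,t)}(\mathcal{H})\}$, the fact that---by the displayed estimate applied with initial configuration $\sigma$ and then integrated over $\mathcal{F}_t$---outside an event of probability $\le\varepsilon_N$ one has either $T^{\sigma}_N\le a_N$ or $\xi^{\sigma,N}_{a_N}=\tilde\xi_{a_N}$ (in which case $T^{\sigma}_N>a_N+r\iff\tilde T>a_N+r$ for all $r\ge0$), independence of $\mathcal{F}_t$ and $\Theta_{(0,t)}(\mathcal{H})$, and $G_N(a_N+s)\le G_N(s)$, one gets
\[
G_N(t+a_N+s)\ \le\ \pr(T^{\textbf{1}}_N>t)\,G_N(a_N+s)+\varepsilon_N\ \le\ G_N(t)\,G_N(s)+\varepsilon_N .
\]
The companion lower bound is obtained symmetrically: on the good event, $\{\tilde T>a_N+s\}\cap\{\xi^{\sigma,N}_{a_N}=\tilde\xi_{a_N}\}\cap\{T^{\textbf{1}}_N>t\}\subseteq\{T^{\textbf{1}}_N>t+a_N+s\}$, and the two configurations can fail to coincide at time $a_N$ only on the good event's complement or on $\{T^{\sigma}_N\le a_N\}$, so
\[
G_N(t+a_N+s)\ \ge\ G_N(t)\,G_N(a_N+s)-\bigl(G_N(t)-G_N(t+a_N)\bigr)-\varepsilon_N .
\]

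From this point the argument is purely deterministic and is exactly that of Proposition~$1.2$ of \cite{Mountford}. Iterating the first inequality bounds $G_N$ from above by a pure exponential on the scale $\mathbb{E}(T^{\textbf{1}}_N)$; the second inequality, together with $\int_0^{\infty}G_N(v)\,dv=\mathbb{E}(T^{\textbf{1}}_N)$ used to calibrate the rate, supplies the matching lower bound, and one concludes $G_N\bigl(t\,\mathbb{E}(T^{\textbf{1}}_N)\bigr)\to e^{-t}$ for every $t>0$, i.e.\ $T^{\textbf{1}}_N/\mathbb{E}(T^{\textbf{1}}_N)\to E$ in distribution. I expect the only genuinely delicate point---and thus the main obstacle in writing the full proof---to be the bookkeeping in this last passage: after the $\approx\mathbb{E}(T^{\textbf{1}}_N)/a_N$ iterations needed to reach the macroscopic scale, one must check that neither the accumulated errors $\varepsilon_N$ nor the accumulated increments $G_N(t)-G_N(t+a_N)$ (handled via a preliminary slow-variation estimate for $G_N$ and the fact that $a_N/\mathbb{E}(T^{\textbf{1}}_N)\to0$) destroy the limit. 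Since $\mathbb{E}(T^{\textbf{1}}_N)$ grows at most exponentially, $a_N$ is polynomial, and $\varepsilon_N$ is super-exponentially small, there is plenty of room, and the remaining details are as in \cite{Mountford}.
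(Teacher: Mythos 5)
Your proposal is correct and follows essentially the same route as the paper, which simply invokes Proposition~1.2 of \cite{Mountford} once the regeneration property of Proposition~\ref{Prop0} is in hand; your submultiplicativity and supermultiplicativity estimates for $G_N(t)=\pr(T^{\textbf{1}}_N>t)$ are exactly the ingredients of that reference, and your derivations of them from Proposition~\ref{Prop0}$(i)$ and the independence of $\Theta_{(0,t)}(\mathcal{H})$ from $\mathcal{F}_t$ are sound. The only genuine deviation is cosmetic: you strengthen $a_N$ to $(S+1)N^{2}$ regeneration attempts to make the error super-exponentially small, which is harmless (and safe) but not needed, since $\mathbb{E}(T^{\textbf{1}}_N)$ grows at most exponentially in $N$, $a_N$ is polynomial, and the iteration scheme already tolerates an error of the form $c^{N}$.
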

		\section{Metastability}\label{Metastability}
In this section, we prove Theorem \ref{TeoMetaD=1R>1}. We start by proving a proposition that will imply that the probability of the event ``both types of particles survive until time $a_{2N}$ but there is no particle of type $2$ in  $[1,N]\times[0,a_{2N}]$" is exponentially small on $N$.

Given $i\geq1 $ and $N$, define the following stopping times 
		\begin{equation}\label{S_k}
		S^{\zeta_0}_i=\inf \{t>(i-1)a_{2N}:\exists\hspace{0.1cm} y \in [1,N],\hspace{0.1cm} \zeta^{\zeta_0,N}_{t}(y)=2\},
		\end{equation}
and
		$$
		\hat{S}^{\zeta_0}_i=\inf \{t>(i-1)a_{2N}:\exists \hspace{0.1cm} x \in [-N+1,0],\hspace{0.1cm} \zeta^{\zeta_0,N}_{t}(x)=1\}.
		$$
Note that,	by the symmetry of the Harris construction,  $\hat{S}^{\zeta_0}_i$ and $S^{\zeta_0}_i$ have the same distribution. Therefore, we state the following result only for $S^{\zeta_0}_i$, but it will  also be valid for $\hat{S}^{\zeta_0}_i$.
	\begin{proposition}\label{Prop1} Consider $\mathcal{C}=\{\zeta_0 \in \{0,1,2\}^{[-N+1,N]}: \exists\hspace{0.2cm} x, y\hspace{0.2cm} \zeta_0(x)=1,\zeta_0(y)=2\}$.  Then, there exists  $c$, $0<c<1$, such that
			\begin{equation}\label{Eq 1 Prop1}\underset{\zeta_0 \in \mathcal{C}}{\sup}\,\pr(\tau^{\zeta_0}_N>2N^2a_{2N}; \exists \hspace{0.1cm}i \hspace{0.2cm} 1\leq i\leq N:S^{\zeta_0}_{i}>ia_{2N})\leq 2N^2 c^{2N},
			\end{equation}
			for all $N$ large enough.
		\end{proposition}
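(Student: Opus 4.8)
The plan is to estimate the probability that both populations survive a long time but type 2 never appears inside $[1,N]$, and to do so by a renewal-type argument over the $N$ epochs of length $a_{2N}$. Fix $\zeta_0\in\mathcal C$. The key observation is that, conditionally on the event $\{\tau^{\zeta_0}_N>2N^2a_{2N}\}$, at each time $(i-1)a_{2N}$ there is at least one particle of each type alive somewhere in $[-N+1,N]$, so in particular there is a site $y_0$ with $\zeta^{\zeta_0,N}_{(i-1)a_{2N}}(y_0)=2$. I will use the coupling of Remark \ref{rem:1} between the two-type process and the classical contact process: the set of sites that are occupied (by either type) in $\zeta^{\zeta_0,N}$ dominates the classical contact process started from the corresponding occupied set, and the positions of type-2 particles dominate the descendants of the type-2 sites as long as they stay in their priority region $[1,N]$. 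The point is that, once a type-2 particle is present at some $y_0\in[-N+1,N]$, within one epoch of length $a_{2N}$ it has a uniformly positive chance of sending a descendant path into $[1,N]$ and, moreover, of producing there an $N$-barrier point (this is exactly where Proposition \ref{N-barrier} and the estimate \eqref{Prop 0 eq*} enter, giving the uniform constant $\hat\eta\tilde\eta$ as in \eqref{Prop 0 eq**}). Hence on the survival event the conditional probability that $S^{\zeta_0}_i\le ia_{2N}$ — i.e.\ that type 2 does reach $[1,N]$ during epoch $i$ — is bounded below by a constant $q>0$ independent of $N$, uniformly over the configuration at time $(i-1)a_{2N}$.

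The second step is to iterate this over $i=1,\dots,N$ using the strong Markov property at the times $(i-1)a_{2N}$, exactly in the style of \eqref{Prop 0 eq1}. Writing $c=1-q<1$, one gets
$$
\sup_{\zeta_0\in\mathcal C}\pr\bigl(\tau^{\zeta_0}_N>2N^2a_{2N};\ S^{\zeta_0}_{i}>ia_{2N}\ \text{for all }1\le i\le N\bigr)\le c^{N}.
$$
This controls the event that type 2 fails to enter $[1,N]$ during \emph{any} of the first $N$ epochs. To obtain the statement as written — where the bad event is that \emph{there exists} some $i\le N$ with $S^{\zeta_0}_i>ia_{2N}$ — I would decompose over the first such index $i$ and restart: for each fixed $i$, on $\{S^{\zeta_0}_i>ia_{2N}\}\cap\{\tau^{\zeta_0}_N>2N^2a_{2N}\}$ the two-type process at time $ia_{2N}$ still has a type-2 particle somewhere (by survival), so we may reapply the same one-epoch estimate over the next $N$ epochs; the $2N^2$ epochs available up to time $2N^2a_{2N}$ leave room for at least $N$ further attempts after any such $i\le N$. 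Summing the resulting bound $c^{N}$ over the at most $2N^2$ choices of $i$ gives the factor $2N^2c^{2N}$ in \eqref{Eq 1 Prop1} (after relabelling $c$; note $c^{2N}\le c^N$, so one can absorb constants and reindex so that the final bound reads $2N^2c^{2N}$ with a possibly larger $c<1$).

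The main obstacle is making the one-epoch lower bound $q>0$ genuinely uniform in the configuration at the start of the epoch and in $N$: we know there is \emph{a} type-2 particle, but it could sit at an awkward location (e.g.\ near $0$, or deep in $[-N+1,0]$ where it enjoys no priority and competes with type 1). The remedy is that in the coupling a lone type-2 descendant travelling toward $[1,N]$ is, before it reaches $[1,N]$, indistinguishable from a classical contact-process path (its type is irrelevant to the flip rates until it lands in $[1,N]$), and once it enters $[1,N]$ it has priority there; so I can lower-bound the relevant event by a classical connectivity event of the form $(y_0,(i-1)a_{2N})\to A_2\times\{\cdot\}$ inside $[-N+1,N]$ followed by the $N$-barrier construction of Proposition \ref{N-barrier} — all with $N$-uniform probabilities, using that $a_{2N}=(\hat K\hat N M(2N)^2+3)\,2N$ is comfortably larger than $S(2N)$ plus the linear time needed to cross the box. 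A secondary technical point is bookkeeping the independence: the events driving successive epochs live in disjoint time-slabs $[-N+1,N]\times[(i-1)a_{2N}, ia_{2N}]$ of the Harris construction, so the conditional-independence steps are justified exactly as in the proof of Proposition \ref{Prop0}.
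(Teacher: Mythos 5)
Your proposal does not prove the statement, and it departs from the paper's argument in a way that is not merely different but logically inadequate. There are two separate problems.

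First, the claimed one-epoch lower bound is not uniform. You assert that, given at least one type-2 particle somewhere at time $(i-1)a_{2N}$, the conditional probability that $S^{\zeta_0}_i\le ia_{2N}$ is bounded below by a constant $q>0$ independent of $N$ and of the configuration. Consider the configuration with a single type-2 particle at $-N+1$ and type-1 particles filling everything else. The type-2 particle sits deep in the region where type 1 has priority; it cannot displace its type-1 neighbours, it can only wait for them to die and then race type 1 for the vacancies. For it (or a descendant) to enter $[1,N]$ it must cross $\Theta(N)$ sites of hostile territory, and that event is not bounded below uniformly in $N$ --- if anything it decays exponentially. Your heuristic that ``before it reaches $[1,N]$ a lone type-2 path is indistinguishable from a classical contact-process path'' is false: along its whole trajectory in $[-N+1,0]$ the type-2 particle is additionally subject to being overwritten by the surrounding type-1 population, which the classical contact process is not. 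The estimate \eqref{Prop 0 eq*} you invoke is about a single classical particle reaching $A_2$, and does not transfer.

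Second, and more fundamentally, even if the uniform $q>0$ were granted, your iteration only controls the event $\{S^{\zeta_0}_i>ia_{2N}\ \text{for all }1\le i\le N\}$, whereas \eqref{Eq 1 Prop1} concerns $\{\exists\, i\le N:\ S^{\zeta_0}_i>ia_{2N}\}$. These are not interchangeable, and the ``decompose over the first such $i$ and restart'' step does not close the gap: for a fixed $i$, $\{S^{\zeta_0}_i>ia_{2N}\}$ is a one-epoch event with non-negligible probability, and restarting at time $ia_{2N}$ to run $N$ further epochs says nothing about that probability --- on a typical realization with $S_1>a_{2N}$ but $S_j\le ja_{2N}$ for all $j\ge 2$, your restart would return a quantity near $1$, not near $c^N$, and summing such terms over $i$ gives a constant, not $2N^2c^{2N}$. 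What is actually needed (and what the paper proves) is that the single-epoch event $\{\tau^{\hat\zeta_0}_N>a_{2N};\ S^{\hat\zeta_0}_1>a_{2N}\}$ already has probability $\le c^{2N}$, uniformly in $\hat\zeta_0\in\mathcal C$; the union bound over $i$ is then the only summation required.

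The paper's mechanism is the opposite of yours. It does not try to show type 2 is likely to \emph{enter} $[1,N]$; it shows that if type 2 stays out of $[1,N]$ for a whole epoch, then type 1 almost surely takes over the entire interval. Concretely: on $\{S^{\hat\zeta_0}_1>a_{2N}\}$ the type-1 particles evolve exactly as the classical contact process $\xi^{B,2N}$ started from $B=\{\hat\zeta_0=1\}$, and by the regeneration Proposition \ref{Prop0}(i) the event $\{T^B_{2N}>a_{2N};\ \xi^{B,2N}_{a_{2N}}\neq\xi^{\mathbf 1,2N}_{a_{2N}}\}$ has probability $\le c^{2N}$. On the complementary event $\xi^{B,2N}_{a_{2N}}=\xi^{\mathbf 1,2N}_{a_{2N}}$, Lemma \ref{PathsLemma} together with the priority of type 1 in $[-N+1,0]$ forces every occupied site at time $a_{2N}$ to be type 1, which makes $\{\tau^{\hat\zeta_0}_N>a_{2N};\ S^{\hat\zeta_0}_1>a_{2N}\}$ impossible. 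This single-epoch argument, plus the Markov property and a union bound over $i\le N$, gives \eqref{Eq 1 Prop1}. You would need this regeneration-driven ``type 1 wins'' mechanism, not a ``type 2 migrates'' mechanism, to produce the required exponential bound.
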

To prove Proposition \ref{Prop1} we will need the next lemma. 
\begin{lemma}\label{PathsLemma}
Let $A$ and $B$ be disjoint subsets of $[-N+1,N]$. Given the construction of the two-type contact process with initial configuration $\zeta^{A,B,N}_0 = \mathds{1} _{A}+2\mathds{1} _{B}$,  we have that
  $\zeta_{t}^{A, B,N}(x)=1$ if and only if there exists a path  $\gamma$  connecting $A$  with  $(x,t)$
  such that $\zeta^{A,B,N}_{s}(\gamma(s))=1$, for all  $s$, $0 \leq s \leq t$,  where $x\in [-N+1,N]$ and $t>0$.
\end{lemma}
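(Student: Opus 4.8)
\textbf{Proof proposal for Lemma \ref{PathsLemma}.}

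The plan is to prove both implications by induction on the (finite, a.s.) number of Poisson marks in the graphical construction that fall in the time window $[0,t]$, or equivalently by induction on the successive jump times $0 = t_0 < t_1 < t_2 < \cdots$ of the process $\{\zeta^{A,B,N}_s\}_{s\le t}$. The key structural fact is the one recorded in the construction of $\zeta^{A,B,N}$ right after Remark \ref{rem:1}: at a time $t_j \in P^{y\to x}$ a site $x$ can only \emph{become} (or stay) type $1$ if $y$ is type $1$ at that moment and $x$ is not sitting in its own priority region while occupied, and at a time $t_j \in P^x$ the site $x$ is emptied. In particular, type-$1$ mass is created at $x$ only along a type-$1$ arrow from a neighbour that is itself type $1$, and it is destroyed exactly when a recovery mark hits an occupied site; a type-$1$ particle is never \emph{overwritten} by a type $2$ coming along an arrow, because type $1$ has priority on $[-N+1,0]$ and on $[1,N]$ the rate $c(x,\zeta,1\to 2)$ is active — so I must be slightly careful: a type-$1$ particle on $[1,N]$ \emph{can} be flipped to type $2$. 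This is exactly why the condition in the statement requires a path $\gamma$ along which the process is type $1$ \emph{at every time} $s\in[0,t]$, not merely a Harris path; the lemma is really the statement that the type-$1$ sub-population is itself governed by an ``internal'' contact-type dynamics whose allowed paths are the Harris paths that stay type-$1$.

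For the ``if'' direction: suppose such a path $\gamma:[0,t]\to[-N+1,N]$ exists with $\zeta^{A,B,N}_s(\gamma(s))=1$ for all $s\in[0,t]$; then in particular $\zeta^{A,B,N}_t(x)=\zeta^{A,B,N}_t(\gamma(t))=1$, which is already the conclusion — so this direction is essentially tautological once one observes $\gamma(0)\in A$ follows from $\zeta_0^{A,B,N}(\gamma(0))=1$ together with $A\cap B=\emptyset$. The content is entirely in the ``only if'' direction, which I would prove by backward induction / descent on jump times. Assume $\zeta^{A,B,N}_t(x)=1$. Let $t_j$ be the last jump time $\le t$ of the Poisson clocks that affects $\gamma$'s candidate trajectory at $x$, i.e. the last time in $[0,t]$ at which the value at $x$ was \emph{set} to $1$; between $t_j$ and $t$ the value at $x$ stays $1$ (no $P^x$ mark and no $P^{y\to x}$ mark carrying a $2$ into an unprotected $x$), so we may take $\gamma\equiv x$ on $[t_j,t]$. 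If $t_j=0$ then $x\in A$ and we are done with $\gamma\equiv x$. If $t_j>0$, then by the construction $t_j\in P^{y\to x}$ for some neighbour $y$ with $\zeta^{A,B,N}_{t_j}(y)=1$ (the only way to set $x$ to $1$), so $\zeta^{A,B,N}_{t_j-}(y)=1$ as well since $t_j\notin P^y$ for that arrow to be effective — more precisely $y$ is type $1$ just before $t_j$. Apply the induction hypothesis to $y$ at time $t_j-<t$ (strictly fewer marks in the window) to get a path $\gamma'$ from $A$ to $(y,t_j-)$ staying type $1$ throughout; concatenate $\gamma'$, the jump $y\to x$ at time $t_j$, and the constant piece at $x$ on $[t_j,t]$. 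This concatenation is a legal Harris path (it crosses only the arrow $t_j\in P^{y\to x}$, hits no recovery mark, since none act on the type-$1$ trajectory by construction of $t_j$), and the process is type $1$ at every site it visits at the corresponding time, by the induction hypothesis on the $\gamma'$ part and by the choice of $t_j$ on the $x$ part. This closes the induction.

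The main obstacle — and the point to treat with care in the write-up — is the asymmetry I flagged: on $[1,N]$ a type-$1$ particle is vulnerable to being converted to type $2$, so ``the value at $x$ stays $1$ between $t_j$ and $t$'' must be justified from the \emph{definition} of $t_j$ (the last time the value at $x$ was set to $1$), not from any monotonicity; and ``the only way to set $x$ to $1$ is an effective type-$1$ arrow'' must be read off carefully from cases (1)--(2) of the construction of $\zeta^{A,B,N}$. I would also remark that the same statement with $1$ replaced by $2$ and $A$ by $B$ holds by the symmetry of the construction under $x\mapsto 1-x$, $1\leftrightarrow 2$, which is the form actually needed for $S^{\zeta_0}_i$ versus $\hat S^{\zeta_0}_i$ in Proposition \ref{Prop1}. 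Everything else (finiteness of the number of marks in $[0,t]$ a.s., measurability) is standard for the graphical construction and needs no detail.
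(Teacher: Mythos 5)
Your proposal is correct, and it uses essentially the same underlying idea as the paper: induction on the a.s.\ finite set of Poisson marks of $\{P^{x}\}_{x\in[-N+1,N]}$ and $\{P^{y\to x}\}$ falling in $[0,t]$, with the ``if'' direction dismissed as immediate. The difference is purely one of direction. The paper enumerates the jump times $0=t_0<t_1<\cdots<t_m<t_{m+1}=t$ and proves \emph{forward} by induction on $i$ that every site of type $1$ at time $t_i$ is the endpoint of a type-$1$-valued path from $A$, with a two-case step at $t_{i+1}$ (value at $y$ unchanged: extend the old path by a constant piece; value at $y$ changed: it must have been set by an arrow $t_{i+1}\in P^{y+k\to y}$ with $\zeta_{t_i}(y+k)=1$, so extend the path at $(y+k,t_i)$). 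You instead run a \emph{backward} descent from $(x,t)$ to the last time $t_j$ at which $x$ was set to $1$, showing the value stays $1$ on $[t_j,t]$ and then recursing from $(y,t_j-)$ where $t_j\in P^{y\to x}$. Both are valid. The forward version is marginally tidier to write down: it never has to treat the left limit $t_j-$ as a nominal time, and the case split ``value changed / unchanged'' absorbs automatically the asymmetry you flag (a type-$1$ particle on $[1,N]$ being overwritten by a type-$2$ arrow simply cannot arise when the current value is $1$, because that configuration is filtered out by the hypothesis $\zeta_{t_{i+1}}(y)=1$). Your explicit highlighting of that asymmetry, and of the reduction ``set to $1$ only via a type-$1$ arrow,'' is correct and is exactly the content that the paper's case (2) encodes; it just does not need to be surfaced separately in the forward formulation. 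One small technical point to tighten if you were to write this up: ``apply the induction hypothesis to $y$ at time $t_j-$'' should be phrased as applying it at some time $s\in(t_{j-1},t_j)$ (equivalently at $t_{j-1}$), and then appending the constant piece at $y$ on $[s,t_j)$ before the jump; the induction variable (number of marks in $[0,s]$) then strictly decreases, as you intend.
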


\begin{proof}
$(\Leftarrow)$ It is clear.\

$(\Rightarrow)$
For a given realization of the Harris construction, let $m$ be the number of the marks of the Poisson processes $ \{P^{x}\}_{x \in [-N+1,N ]}$ and 
$\{P^{x\rightarrow y}\}_{\{x,y \in [-N+1,N ]: \hspace{0.2cm} 0<|x-y|\leq R\}}$ that appear before time $t$. Let $t_i$ be the time of the $i$-th mark,  and set $t_{0}=0$ and $t_{m+1}=t$. 
We now proceed  by induction on $i$, $0\leq i\leq m+1$. For $i=0$ it  is clear that the statement holds for $t_0=0$.  Suppose that the statement is valid for $i$. Then, take $y$ such that $\zeta^{A,B,N}_{t_{i+1}}(y)=1$. We must find a path $\beta$ connecting $A\times \{0\}$ with $(y,t_{i+1})$ with the desired properties. There are two possibilities:
\begin{enumerate}
\item  $\zeta^{A,B,N}_{t_{i+1}}(y)=\zeta^{A,B,N}_{t_{i}}(y)$. In this case,  by  the induction hypothesis we have that there is $\gamma$ connecting $A \times \{0\}$ with $(y,t_{i})$ such that $\zeta^{A,B,N}_{s}(\gamma(s))=1, \hspace{0.2cm}  0 \leq s \leq t_{i}$, and we define
$$\beta(s)= \left \lbrace \begin{array}{cc}
\gamma(s) & 0\leq s < t_{i},\\
y & t_{i}\leq s \leq t_{i+1}.
\end{array}\right. $$

    \item $\zeta^{A,B,N}_{t_{i+1}}(y)\neq\zeta^{A,B,N}_{t_{i}}(y)$. In this case, there is an integer  $k \in [-R, R]\setminus \{0\}$  such that $t_{i+1}\in P^{y+k\rightarrow y}$ and $\zeta^{A,B,N}_{t_{i+1}}(y)=\zeta^{A,B,N}_{t_{i}}(y+k)$. By the induction hypothesis we have that there is $\gamma$ connecting $A\times \{0\}$ with $(y+k,t_{i})$ satisfying $\zeta^{A,B,N}_{s}(\gamma(s))=1, \hspace{0.2cm} 0 \leq s \leq t_{i},$ and we define
$$\beta(s)= \left \lbrace \begin{array}{cc}
\gamma(s) & 0\leq s < t_{i}\\
y+k & t_{i}\leq s <t_{i+1}\\
y & s=t_{i+1}.
\end{array}\right. $$
\end{enumerate}
Since in each case, the path $\beta$ satisfies $$\zeta^{A,B,N}_{s}(\beta(s))=1, \hspace{0.2cm} \text{ for all }s,\, 0 \leq s \leq t_{i+1},$$
the proof of the lemma is complete.
\end{proof}
		\begin{proof}[Proof of Proposition \ref{Prop1}]
 For $k\geq 2$ we have that 
			\begin{align}\label{label}
				\begin{split}\pr(\tau^{\zeta_0}_N>Na_{2N}; S^{\zeta_0}_{k}>ka_{2N})
				&\leq \sum_{\hat{\zeta_0}\in \mathcal{C}} \pr(\tau^{\hat{\zeta}_0}_N>a_{2N};S^{\hat{\zeta}_0}_1>a_{2N})\pr(C_k=\hat{\zeta}_0)\\&
				= \underset{\hat{\zeta}_0 \in \mathcal{C}}{\sup}\, \pr(\tau^{\hat{\zeta}_0}_N>a_{2N}, S^{\hat{\zeta}_0}_1>a_{2N}).
				\end{split}
			\end{align}
%
Thus, to obtain  \eqref{Eq 1 Prop1} it is enough to prove
		\begin{equation}\label{Eq 2 Prop1}
		    \underset{\hat{\zeta}_0 \in \mathcal{C}}{\sup}\, \pr(\tau^{\hat{\zeta}_0}_N>a_{2N}; S^{\hat{\zeta}_0}_1>a_{2N})\leq c^{2N},
		\end{equation}
			for some $c$, $0<c<1$.
			
 To simplify notation, only throughout the proof,   we let  $\xi^{2N}_{a_{2N}}$ stand for the classical contact process restricted to the interval $[-N+1,N]$.   
 
 Now, we observe that the event inside the probability in \eqref{Eq 2 Prop1} can be written as 		
\begin{align*}
\begin{split}
\{\tau^{\hat{\zeta}_0}_N>a_{2N}; S^{\hat{\zeta}_0}_1>a_{2N}\}=&\{\tau^{\hat{\zeta}_0}_{N}>a_{2N};S^{\hat{\zeta}_0}_1>a_{2N};\xi^{B,2N}_{a_{2N}}=\xi^{\textbf{1},2N}_{a_{2N}}\}\\&\cup\{\tau^{\hat{\zeta}_0}_{N}>a_{2N};S^{\hat{\zeta}_0}_1>a_{2N};\xi^{B,2N}_{a_{2N}}\neq\xi^{\textbf{1},2N}_{a_{2N}}\}.
\end{split}
\end{align*}
Next, we set $B=B(\hat{\zeta}_0)=\{x: \hat{\zeta}_0(x)=1\}$ and we claim that
			\begin{equation}\label{Eq 4 Prop1}
				\{\tau^{\hat{\zeta}_0}_{N}>a_{2N};S^{\hat{\zeta}_0}_1>a_{2N};\xi^{B,2N}_{a_{2N}}=\xi^{\textbf{1},2N}_{a_{2N}}\}=\emptyset.
			\end{equation}
Observe that this claim implies that
			\begin{align}\label{Eq 3 Prop1}
			\begin{split}
			    	\{\tau^{\hat{\zeta}_0}_N>a_{2N}; S^{\hat{\zeta}_0}_1>a_{2N}\}=&\{\tau^{\hat{\zeta}_0}_{N}>a_{2N};S^{\hat{\zeta}_0}_1>a_{2N};\xi^{B,2N}_{a_{2N}}\neq\xi^{\textbf{1},2N}_{a_{2N}}\}\\&\subset \{T^{B}_{2N}>a_{2N};\xi^{B,2N}_{a_{2N}}\neq\xi^{\textbf{1},2N}_{a_{2N}}\}.
			    	\end{split}
			\end{align}
Hence, \eqref{Eq 2 Prop1} follows from \eqref{Eq 3 Prop1} and Proposition \ref{Prop0} item $(i)$.
			
 Thus, the proof is completed by showing \eqref{Eq 4 Prop1}. For this purpose, it is enough to show that  every realization in $\{S^{\hat{\zeta}_0}_1>a_{2N};\xi^{B,2N}_{a_{2N}}=\xi^{\textbf{1},2N}_{a_{2N}}\}$ is in $\{\tau^{\hat{\zeta}_0}_{N}\leq a_{2N}\}$.  Take $x \in \xi^{\textbf{1},2N}_{a_{2N}}$ and let $\gamma$ be a path  connecting $B\times\{0\}$ with $(x,a_{2N})$. For $\gamma$ we define $s^{*}$ by 
			$$s^{*}=\inf \{t: 0<t\leq a_{2N} ,\zeta^{\hat{\zeta}_0,2N}_t(\gamma(t))=2\},$$
with the usual convention that $\inf\{\emptyset\}=\infty$.

Suppose that $s^{*}<\infty$.  Since $S^{\hat{\zeta}_0}_1>a_{2N}$ and $\zeta^{\hat{\zeta}_0,2N}_{s^{*}}(\gamma(s^{*}))=2$, we obtain that $\gamma(s^{*})\in [-N+1,0]$. However, by the definition of $s^{*}$, we have that  $\zeta^{\hat{\zeta}_0,2N}_t(\gamma(t))=1$ for all $t<s^{*}$, which implies that $\gamma$ restricted to $[0,s^{*}]$ is a path of particles $1$ that infects the site $\gamma(s^{*})$  at time $s^{*}$. Since the particles of type $1$ have priority in $[-N+1,0]$, we get $\zeta^{\hat{\zeta}_0,2N}_{s^{*}}(\gamma(s^{*}))=1$. This is a contradiction and we conclude that $s^{*}=\infty$, which means $\zeta^{\hat{\zeta}_0,2N}_t(\gamma(t))=1$ for all $0\leq t\leq a_{2N}$. Therefore, $\zeta^{\hat{\zeta}_0,2N}_{a_{2N}}(x)=1$ for all $x\in \xi^{\textbf{1},2N}_{a_{2N}}$.	
		\end{proof}	
		\subsection{Proof of Theorem \ref{TeoMetaD=1R>1}}
		 We are now ready to state the regeneration property for the process $\{\zeta^{\textbf{1},\textbf{2},N}_t\}$. The main idea is to prove that if the two types of particles survive for a given time polynomial in $N$, then  outside an event with  exponentially    small probability we can find two \emph{barriers} at the same time, one in $[-N+1,0]$ and the other in $[1,N]$, such that the first one is infected by a particle of type $1$ and the second by a particle of type $2$. Basically, we combine the idea of the proof of Proposition \ref{Prop0} with Proposition \ref{Prop1} to obtain the following:
		\begin{proposition}\label{Prop2}
			There are  sequences $c_N$ and $d_N$ that satisfy 
			\begin{center}
				\begin{itemize}
				\item[(i)] There exists $\nu$, $0<\nu<1$, such that for $N$ large enough 
\begin{equation}\label{Eqnu}
\underset{\zeta \in \mathcal{C}}{\sup}\,\pr(\zeta^{\textbf{1},\textbf{2},N}_{c_N}\neq \zeta^{\zeta,N}_{c_N} ;\tau^{\zeta}_{N}\geq c_N)\leq \nu^{N},
\end{equation}
				\item[(ii)] $\frac{d_N}{c_N}\rightarrow\infty,$
					
					\item[(iii)] $\underset{N\rightarrow \infty}{\lim} \pr(\tau^{\textbf{1},\textbf{2}}_N<d_N)=0,$	
				\end{itemize}
			\end{center}
			where $\mathcal{C}$ has been defined in Proposition \ref{Prop1}. In particular, we have that $c_N=2N^2 a_{2N}$ and $d_N=b_N$.
		\end{proposition}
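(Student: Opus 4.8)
The plan is to prove items (i), (ii) and (iii) in that order, using $c_N=2N^2a_{2N}$ and $d_N=b_N$, where $a_N$ and $b_N$ come from Proposition \ref{Prop0}. The main work is item (i); items (ii) and (iii) are short consequences of Proposition \ref{Prop0}.

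For item (i), the idea is to combine the regeneration mechanism from the proof of Proposition \ref{Prop0} with the one-sided control furnished by Proposition \ref{Prop1}. Partition $[0,c_N]$ into the $N$ blocks $[(i-1)a_{2N},ia_{2N})$, $1\le i\le N$, and inside each block run, in parallel, the barrier-search scheme on each half-interval: for the half $[1,N]$ we look (exactly as in the proof of Proposition \ref{Prop0}, but with $N$ replaced by $2N$ so that the barrier lives in the appropriate sub-window of $[-N+1,N]$, using Proposition \ref{N-barrier}) for a time at which some site of the relevant sub-interval $A_2$ is occupied by a type-$2$ particle and is the base of an $N$-barrier; symmetrically on $[-N+1,0]$ for a type-$1$ particle. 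The first obstacle is that the barrier-search of Proposition \ref{Prop0} requires the half-interval to be \emph{occupied by the right type of particle} at the start of the block; this is precisely what Proposition \ref{Prop1} (and its symmetric version for $\hat S^{\zeta_0}_i$) guarantees, up to an event of probability $2N^2c^{2N}$: on the complementary event, for every $i\le N$ there is a type-$2$ particle somewhere in $[1,N]$ by time $ia_{2N}$, and a type-$1$ particle in $[-N+1,0]$ by time $ia_{2N}$, so within each of the $N$ blocks each half is ``seeded'' with the correct type and the one-type dynamics there dominates a genuine classical contact process. Then, conditionally, each block independently has probability at least $\hat\eta\tilde\eta$ (as in \eqref{Prop 0 eq**}) of producing the desired infected barrier on a given half; since the two halves interact only through the boundary, which a barrier-type argument neutralizes, we get a uniform lower bound $\rho>0$, independent of $N$, for the probability that a given block produces a type-$1$ barrier in $[-N+1,0]$ \emph{and} a type-$2$ barrier in $[1,N]$ at a common time. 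Running over $N$ independent blocks, the probability that no block succeeds is at most $(1-\rho)^N$.

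It remains to check that the occurrence of a type-$1$ $N$-barrier in $[-N+1,0]$ and a type-$2$ $N$-barrier in $[1,N]$ at the same time $t\le c_N$ forces $\zeta^{\textbf{1},\textbf{2},N}_{c_N}=\zeta^{\zeta,N}_{c_N}$ whenever $\tau^{\zeta}_N\ge c_N$. This is where Lemma \ref{PathsLemma} enters: on the half $[1,N]$, every site occupied by a type-$2$ particle at any later time is reached by a path of type-$2$ particles, and the $N$-barrier property (Definition \ref{DefinitionN-barrier}(b) read on the shifted construction) says that all such paths from $[1,N]\times\{t\}$ to time $t+S$ can be rerouted through the single barrier site; an analogous statement holds for type $1$ on $[-N+1,0]$. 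Since the definition of the two-type process only ever resolves a conflict in favour of the locally-prioritized type, the configuration of type-$2$ particles in $[1,N]$ and of type-$1$ particles in $[-N+1,0]$ after the barrier time depends on the pre-barrier configuration only through ``is some site of the half occupied by the prioritized type'', which both $\zeta^{\textbf{1},\textbf{2},N}$ and $\zeta^{\zeta,N}$ satisfy on the event $\tau^{\zeta}_N\ge c_N$; hence the two two-type processes couple from time $t+S\le c_N$ onward. Collecting the $2N^2c^{2N}$ from Proposition \ref{Prop1} and the $(1-\rho)^N$ from the block argument gives \eqref{Eqnu} with any $\nu\in(\max\{c^{2},1-\rho\}^{?},1)$ — concretely $\nu$ a number in $(0,1)$ exceeding both $c^{2}$ and $1-\rho$ absorbs the polynomial prefactor for large $N$.

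Finally, item (ii) is immediate from $c_N=2N^2a_{2N}$, $d_N=b_N=e^{(c_\infty/2)N}$ and $a_N=(\hat K\hat N M^2+3)N=O(N^3)$, so $d_N/c_N=e^{(c_\infty/2)N}/O(N^5)\to\infty$. For item (iii), note that $\tau^{\textbf{1},\textbf{2}}_N\ge T^{\textbf{1}}_{2N}$: by the coupling of Remark \ref{rem:1}, for as long as the classical contact process restricted to $[-N+1,N]$ started from full occupancy survives, both types are present in the two-type process started from $\mathds{1}_{(-N,0]}+2\mathds{1}_{[1,N)}$ (indeed, the union of the two species dominates that classical process, and one can check using Lemma \ref{PathsLemma} together with the priority rule that neither species can vanish before the classical process dies — the surviving-side argument of Proposition \ref{Prop1} prevents, say, the type-$1$ family from disappearing while the classical process is still alive). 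Hence $\pr(\tau^{\textbf{1},\textbf{2}}_N<d_N)\le\pr(T^{\textbf{1}}_{2N}<b_N)$, which tends to $0$ by Proposition \ref{Prop0}(iii) applied with $2N$ in place of $N$ (and $b_{2N}\ge b_N$). The step I expect to be the main obstacle is the coupled two-sided regeneration in item (i): making precise that the two half-interval barrier searches can be run simultaneously with a density bound uniform in $N$ despite the boundary interaction, and that a pair of simultaneous infected barriers genuinely forces the global coupling $\zeta^{\textbf{1},\textbf{2},N}_{c_N}=\zeta^{\zeta,N}_{c_N}$.
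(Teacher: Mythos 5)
Your plan for item~(i) matches the paper's in spirit: you partition $[0,c_N]$ into $N$ blocks of length $2Na_{2N}$, use Proposition~\ref{Prop1} to ensure each half is seeded with the prioritized type by the start of each block (up to a cost $2N^2 c^{2N}$), use Proposition~\ref{N-barrier} to get an infected $N$-barrier on each half with uniform positive probability per block, and conclude by the Markov property that after $N$ blocks the failure probability is exponentially small. One detail you gloss over and should spell out: it is not enough that \emph{a} type-$2$ barrier exists in $[1,N]$ in one of the processes; the paper intersects $\Lambda^{\zeta_0}_{N,k}$ with $\Lambda_{N,k}$, i.e.\ requires the (possibly different) barrier sites in $A_2$ and $A_4$ to be occupied by the prioritized type \emph{in both} $\zeta^{\zeta_0,N}$ and $\zeta^{\textbf{1},\textbf{2},N}$ simultaneously. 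Only then do both processes get forced onto $2\eta^{[1,N]}_{s_k,S}$ in $[1,N]$ and $\eta^{[-N+1,0]}_{s_k,S}$ in $[-N+1,0]$ at time $s_k+S$, giving the coupling. Your ``couple from time $t+S$ onward'' step implicitly assumes this but never says which process the barrier is infected in. Item~(ii) is fine.

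Item~(iii) contains a genuine error. You claim $\tau^{\textbf{1},\textbf{2}}_N \geq T^{\textbf{1}}_{2N}$ on the grounds that ``for as long as the classical contact process restricted to $[-N+1,N]$ started from full occupancy survives, both types are present.'' This is false, and in fact the inequality goes the other way: the union of the two species \emph{is} the classical contact process on $[-N+1,N]$, so $T^{\textbf{1}}_{2N}$ is the time at which \emph{both} species are extinct, whereas $\tau^{\textbf{1},\textbf{2}}_N$ is the time at which \emph{one} species first disappears; hence $\tau^{\textbf{1},\textbf{2}}_N \leq T^{\textbf{1}}_{2N}$, not $\geq$. One species can certainly go extinct while the union survives (e.g.\ type $2$ sweeps the whole box). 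The ``surviving-side argument of Proposition~\ref{Prop1}'' is a probabilistic statement about regeneration within polynomial time, not a deterministic barrier to extinction, so it cannot be used to rule this out pathwise. The correct argument, as in the paper, is that the set $\{x\in[-N+1,0]:\zeta^{\textbf{1},\textbf{2},N}_t(x)=1\}$ dominates the classical contact process restricted to $[-N+1,0]$ started from full occupancy (because type $1$ has priority there, so type-$1$ paths inside $[-N+1,0]$ are never blocked), and symmetrically on $[1,N]$ for type $2$. These two dominated processes read off disjoint portions of the Harris construction, so they are independent, and therefore $\tau^{\textbf{1},\textbf{2}}_N$ stochastically dominates the minimum of two independent copies of $T^{\textbf{1}}_N$. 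Then $\pr(\tau^{\textbf{1},\textbf{2}}_N<d_N)\le 2\,\pr(T^{\textbf{1}}_N<b_N)\to 0$ by Proposition~\ref{Prop0}(iii).
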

		\begin{proof}
Observe that $\tau^{\textbf{1},\textbf{2}}_N$ is stochastically larger than the minimum of two independent variables with the same law of  $T^{\textbf{1}}_N$. Hence, taking $d_N=b_N$, defined in Proposition \ref{Prop0},  item $(iii)$ is immediate. 

Now,  we take $N$ large enough as in Proposition \ref{N-barrier} and Proposition \ref{Prop1},  $M=M(N)$ and $S=S(N)$ as in Definition \ref{DefinitionN-barrier},  $\imath=\imath(N)$ in \eqref{imath(m)}, the interval $A_2$ in \eqref{Prop 0 eq0} and we define $A_4=\left[\frac{-M\hat{N}}{2}-\frac{\hat{N}}{2},\frac{-\imath\hat{N}}{2}+\frac{\hat{N}}{2}\right]$. Also, for $k \in \{1, \dots ,N\}$, let $s_k=k2N  a_{2N}$,
$$
\Lambda^{\zeta_0}_{N,k}= \left\{\begin{array}{l}\exists\hspace{0.1cm} y_k  \in A_2, z_{k} \in A_4:    \zeta^{\zeta_0,N}_{s_k}(y_k)=2, \zeta^{\zeta_0,N}_{s_k}(z_k)=1,\\  \text{ and } (y_k,s_k),(z_{k},s_k), \text{ are } N\text{-\emph{barriers}} \end{array}\right\},
$$
and
$$
\Lambda_{N,k}=\left \lbrace\begin{array}{l}\exists\hspace{0.1cm} \hat{y}_{k}\in A_2, \hat{z}_{k} \in A_4: \zeta^{\textbf{1},\textbf{2},N}_{s_k}(\hat{y}_k)=2,\zeta^{\textbf{1},\textbf{2},N}_{s_k}(\hat{z}_k)=1,\\
\text{ and }(\hat{y}_k,s_k),(\hat{z}_k,s_k) \text{ are } N\text{-\emph{barriers}}\end{array}\right \rbrace.
$$
To obtain item $(i)$, we first prove the following inclusion
\begin{align}\label{Prop 2 eq1}
\begin{split}
&\{\tau^{\zeta_0}_N>2N^2 a_{2N};\tau^{\textbf{1},\textbf{2}}_N>2N^2 a_{2N}\}\cap \bigcup^{N}_{k=1} \Lambda^{\zeta_0}_{N,k}\cap \Lambda_{N,k}\subset\\& 
 \{\tau^{\zeta_0}_N>2N^2 a_{2N};\tau^{\textbf{1},\textbf{2}}_N>2N^2 a_{2N};
\zeta^{\zeta_0,N}_{2N^2 a_{2N}}=\zeta^{\textbf{1},\textbf{2},N}_{2N^2 a_{2N}}\}.
\end{split}
\end{align}Fix a realization in $ \{\tau^{\zeta_0}_N>2N^2 a_{2N};\tau^{\textbf{1},\textbf{2}}_N>2N^2 a_{2N}\}\cap  \Lambda^{\zeta_0}_{N,k}\cap \Lambda_{N,k}$.  By the definition of $N$-\emph{barrier}, we have that the points $(y_k,s_k)$, $(\hat{y}_k, s_k)$, $(z_k,s_k)$ and $(\hat{z}_k, s_k)$ satisfy:

\begin{itemize}
    \item[] If $y\in [1,N]$  and $[1,N]\times\{s_k\}\rightarrow(y,s_k+S)$, we have $(y_k,s_k)\rightarrow(y,s_k+S)$ inside $[1,N]$  and  $(\hat{y}_k, s_k)\rightarrow(y,s_k+S)$ inside $[1,N]$.
    \item[] If $z\in [-N+1,0]$  and $[-N+1,0]\times\{s_k\}\rightarrow(z,s_k+S)$, we have  $(z_k,s_k)\rightarrow(z,s_k+S)$ inside $[-N+1,0]$  and  $(\hat{z}_k, s_k)\rightarrow(z,s_k+S)$ inside $[-N+1,0]$.
\end{itemize}
Therefore, we  can conclude that
$$\zeta^{\zeta_0,N}_{s_k+S}= \zeta^{\textbf{1},\textbf{2} ,N}_{s_k+S}=2\eta^{[1,N]}_{s_{k},S}\text{ in } [1,N]$$
and
$$\zeta^{\zeta_0,N}_{s_k+S}= \zeta^{\textbf{1},\textbf{2},N }_{s_k+S}=\eta^{[-N+1,0]}_{s_{k},S}\text{ in } [-N+1,0].$$
Consequently, we obtain that $\zeta^{\zeta_0,N}_{2N^2 a_{2N}}=\zeta^{\textbf{1},\textbf{2},N}_{2N^2 a_{2N}}$, which proves \eqref{Prop 2 eq1}.

Next, we choose  $c_N=2N^2 a_{2N}$ and we prove that there exists $0<\nu<1$ such that
\begin{small}
\begin{equation}\label{Prop 2 eq2}
\pr\left(\{\tau^{\zeta_0}_N>2N^2a_{2N};\tau^{\textbf{1},\textbf{2}}_N>2N^2a_{2N}\} \cap\bigcap_{k=1}^{N}(\Lambda^{\zeta_0}_{N,k}\cap \Lambda_{N,k})^{c}\right)\leq {\nu}^{N}
\end{equation}
\end{small}
for all $\zeta_0 \in \mathcal{C}$. To do this, observe that by simple manipulations we have
\begin{small}\begin{align}\label{Prop 2 eq3}
\begin{split}
 &\pr\left(\{\tau^{\zeta_0}_N>2N^2 a_{2N};\tau^{\textbf{1},\textbf{2}}_N>2N^2 a_{2N}\} \cap\bigcap_{k=1}^{N}(\Lambda^{\zeta_0}_{N,k}\cap\Lambda_{N,k})^{c}\right)\\
 &=\pr\left(\{\tau^{\zeta_0}_N>2N^2 a_{2N};\tau^{\textbf{1},\textbf{2}}_N>2N^2 a_{2N}\} \cap\bigcap_{k=1}^{N}((\Lambda^{\zeta_0}_{N,k})^{c}\cup\Lambda_{N,k}^{c})\right)\\
 & \leq\pr\left(\{\tau^{\zeta_0}_N>2N^2 a_{2N};\tau^{\textbf{1},\textbf{2}}_N>2N^2 a_{2N}\} \cap\bigcap_{k=1}^{N}(\Lambda^{\zeta_0}_{N,k})^{c})\right)\\
& \hspace{0.1cm}+\pr\left(\{\tau^{\zeta_0}_N>2N^2 a_{2N};\tau^{\textbf{1},\textbf{2}}_N>2N^2 a_{2N}\} \cap\bigcap_{k=1}^{N}\Lambda_{N,k}^{c}\right).
   \end{split}
\end{align}\end{small}
In order to estimate the last two terms in \eqref{Prop 2 eq3}, observe that each of them is less than 
$\underset{\zeta_0\in\mathcal{C}}{\sup\,}\pr\left(\{\tau^{\zeta_0}_N>2N^2 a_{2N}\} \cap\bigcap_{k=1}^{N}(\Lambda^{\zeta_0}_{N,k})^{c})\right),$ and furthermore we have that
\begin{align}\label{Prop 2 eq*}
\begin{split}
 &\underset{\zeta_0\in\mathcal{C}}{\sup\,}\pr\left(\{\tau^{\zeta_0}_N>2N^2 a_{2N}\} \cap\bigcap_{k=1}^{N}(\Lambda^{\zeta_0}_{N,k})^{c})\right)\\
 &\leq\underset{\zeta_0\in\mathcal{C}}{\sup\,}\pr\left( \{\tau^{\zeta_0}_N>2N^2 a_{2N}\}\cap \bigcap_{k=1}^{N}\left \lbrace\begin{array}{l}   \forall y\hspace{0.1cm} \in A_2 \hspace{0.1cm}\zeta^{\zeta_0,N}_ {s_k}(y)\neq2\text{ or }(y,s_k)\\ \text{ is not an } N\text{-\emph{barrier}}\end{array}\right \rbrace\right)\\
 &\hspace{0.1cm}+\underset{\zeta_0\in\mathcal{C}}{\sup\,}\pr\left( \{\tau^{\zeta_0}_N>2N^2 a_{2N}\}\cap \bigcap_{k=1}^{N}\left \lbrace\begin{array}{l}   \forall z\hspace{0.1cm} \in A_4 \hspace{0.1cm}\zeta^{\zeta_0,N}_ {s_k}(z)\neq 1\text{ or }(z,s_k)\\ \text{ is not an } N\text{-\emph{barrier}}\end{array}\right \rbrace\right)\\
 &=2\underset{\zeta_0\in\mathcal{C}}{\sup\,}\pr\left( \{\tau^{\zeta_0}_N>2N^2 a_{2N}\}\cap \bigcap_{k=1}^{N}\left \lbrace\begin{array}{l}   \forall y\hspace{0.1cm} \in A_2 \hspace{0.1cm}\zeta^{\zeta_0,N}_ {s_k}(y)\neq2\text{ or }(y,s_k)\\ \text{ is not an } N\text{-\emph{barrier}}\end{array}\right \rbrace\right),
 \end{split}
\end{align}
where the first inequality follows by the definition of the event $\Lambda^{\zeta_0}_{N,k}$ and the last equality follows by the symmetry of the Harris construction. Then, replacing \eqref{Prop 2 eq*} in \eqref{Prop 2 eq3}, we obtain that the probability in \eqref{Prop 2 eq2} is smaller than 
\begin{equation}\label{eq4'}
4\,\underset{\zeta_0\in\mathcal{C}}{\sup\,}\pr\left(\{\tau^{\zeta_0}_N>2N^2 a_{2N}\}\cap\bigcap_{k=1}^{N}\left \lbrace\begin{array}{l}   \forall y \hspace{0.1cm}\in A_2 \hspace{0.1cm}\zeta^{\zeta_0,N}_ {s_k}(y)\neq2\text{ or }(y,s_k)\\ \text{ is not an } N\text{-\emph{barrier}}\end{array}\right \rbrace\right).
\end{equation}

Now, we will estimate the probability in \eqref{eq4'}. We use  Proposition \ref{Prop1} for $i=2kN-1$ and we have that 
\begin{small}\begin{align}\label{Prop 2 eq4*}
\begin{split}
&\underset{\zeta_0\in\mathcal{C}}{\sup\,}\pr\left(\{\tau^{\zeta_0}_N>2N^2 a_{2N}\}\cap\bigcap_{k=1}^{N}\left \lbrace\begin{array}{l}   \forall y \hspace{0.1cm}\in A_2 \hspace{0.1cm}\zeta^{\zeta_0,N}_ {s_k}(y)\neq2\text{ or }(y,s_k)\\ \text{ is not an } N\text{-\emph{barrier}}\end{array}\right \rbrace\right)\leq 2N^2c^{2N} +\\&
   \underset{\zeta_0\in\mathcal{C}}{\sup\,}\pr\left(\{\tau^{\zeta_0}_N>2N^2 a_{2N}\}\cap\bigcap_{k=1}^{N}\left \lbrace\begin{array}{l} S^{\zeta_0}_{2kN-1}\leq s_k-a_{2N} \hspace{0.1cm} \forall y \hspace{0.1cm}\in A_2 \hspace{0.1cm}\zeta^{\zeta_0,N}_ {s_k}(y)\neq2\\\text{ or }(y,s_k) \text{ is not an } N\text{-\emph{barrier}}\end{array}\right \rbrace\right).\end{split}\end{align}\end{small}Thus, it is enough to prove that the last term in the inequality \eqref{Prop 2 eq4*} is exponentially small in $N$. To show this, we first observe that the last term in \eqref{Prop 2 eq4*} is smaller than\begin{small}\begin{align}\label{Prop 2 eq4}
\begin{split}
I=&\,\underset{\zeta_0\in\mathcal{C}}{\sup\,}\pr\left(\{\tau^{\zeta_0}_N>2N^2 a_{2N}\}\cap\bigcap_{k=1}^{N}\left \lbrace S^{\zeta_0}_{2kN-1}\leq s_k-a_{2N};\hspace{0.1cm}  \forall y \hspace{0.1cm}\in A_2 \hspace{0.1cm}\zeta^{\zeta_0,N}_ {s_k}(y)\neq2\right \rbrace\right)\\& 
   +\underset{\zeta_0\in\mathcal{C}}{\sup\,}\pr\left(\{\tau^{\zeta_0}_N>2N^2 a_{2N}\}\cap\bigcap_{k=1}^{N}\left \lbrace\begin{array}{l} \hspace{0.1cm} \exists\hspace{0.1cm} y \hspace{0.1cm}\in A_2\hspace{0.1cm} \zeta^{\zeta_0,N}_{s_k}(y)=2\text{ but}\\(y,s_k) \text{ is not an } N\text{-\emph{barrier}}\end{array}\right \rbrace\right).
   \end{split}
\end{align}
\end{small}
Next, we estimate the first term in \eqref{Prop 2 eq4}. We define
$$
D_k=\{x:\zeta^{\zeta_0,N}_{S_{2kN-1}}(x)=2\} \cap[1,N],
$$
and the event
\begin{align*}
C_k=\{\{D_k\}\times\{S^{\zeta_0}_{2kN-1}\}\nrightarrow A_2 \times\{S^{\zeta_0}_{2kN-1}+2a_{2N}\}\text{ inside }[1,N]\}.
\end{align*}By the priority of particles of type $2$ in $[1,N]$, we have that
\begin{align*}
&\{\tau^{\zeta_0}_N>2N^2 a_{2N}\}\cap\bigcap_{k=1}^{N}\left \lbrace S^{\zeta_0}_{2kN-1}\leq s_k-a_{2N};\hspace{0.1cm}  \forall y \hspace{0.1cm}\in A_2 \hspace{0.1cm}\zeta^{\zeta_0,N}_{s_k}(y)\neq2\right \rbrace \\&\subset\{\tau^{\zeta_0}_N>2N^2 a_{2N}\}\cap\bigcap_{k=1}^{N}\{ S^{\zeta_0}_{2kN-1}\leq s_k-a_{2N}; C_k\}.
\end{align*}
\begin{claim}\label{claim2}There exists $\beta>0$ such that for all $N$ large enough
$$
\pr(C^{c}_k |S^{\zeta_0}_{2kN-1}<s_k-a_{2N})>\beta.
$$
\end{claim}
\begin{proof}[Proof of Claim \ref{claim2}]
Fix $\xi \in \{0,1\}^{[1,N]}$ and $\xi \neq \emptyset$, then we have
\begin{align}\label{Claim1Eq1}
\begin{split}
\pr(\xi^{\xi_0,N}_{2a_{2N}}\cap A_2\neq \emptyset)&=\pr(T^{\xi_0}_N>2a_{2N};\xi^{\xi_0,N}_{2a_{2N}}\cap A_2\neq \emptyset)\\&=\pr(T^{\xi_0}_{N}>2a_{2N})-\pr(T^{\xi_0}_N>2a_{2N};\xi^{\xi_0,N}_{2a_{2N}}\cap A_2=\emptyset).
\end{split}
\end{align}
Using a Peirels contour argument for the oriented $k$-dependent system with small closure $\Psi$, defined in Section \ref{Section3}, it is possible to prove that there exist $\beta>0$ and a sequence $f_N$ linear in $N$ such that
\begin{equation}\label{Claim1Eq2'}
\underset{x \in [1,N]}{\inf}\pr(T^{\{x\}}_{N}\geq e^{f_N})>2\beta,
\end{equation}
for $N$ large enough (see \cite{Mountford} Fact $(2.2)$). Since $a_{2N}$ is of order $N^{3}$, the formula in \eqref{Claim1Eq2'} implies that
\begin{equation}\label{Delta}
\pr(T^{\xi_0}_{N}>2a_{2N})\geq 2 \beta.
\end{equation}

Now, we prove that the last term in \eqref{Claim1Eq1} goes to zero as $N$ goes to infinity. Observe that
\begin{align}\label{Claim1Eq2}
\begin{split}
\pr(T^{\xi_0}_N>2a_{2N};\xi^{\xi_0,N}_{2a_{2N}}\cap A_2=\emptyset)
&\leq \pr(T^{\xi_0}_N>2a_{2N};\xi^{\xi_0,N}_{2a_{2N}}\neq\xi^{\textbf{1},N}_{2a_{2N}})\\
&\hspace{0.1cm}+\pr(T^{\xi_0}_N>2a_{2N};\xi^{\xi_0,N}_{2a_{2N}}=\xi^{\textbf{1},N}_{2a_{2N}};\xi^{\xi_0,N}_{2a_{2N}}\cap A_2=\emptyset)\\
&\leq c^{N}+\pr(\xi^{\textbf{1},N}_{2a_{2N}}\cap A_2=\emptyset),
\end{split}
\end{align}
where the second inequality follows by item $(i)$ of Proposition \ref{Prop0}. Using the duality of the classical contact process  we have
$$
\pr (\xi^{\textbf{1},N}_{2a_{2N}}\cap A_2\neq\emptyset)=\pr(\xi^{A_2,N}_{2a_{2N}}\neq \emptyset).
$$
Observe that the length of $A_2$ is at least $l_N=N-2\alpha \hat{K} \hat{N} -\left\lfloor 4\alpha \hat{K}\hat{N} \right\rfloor-1$, then we obtain
$$
\pr (\xi^{\textbf{1},N}_{2a_{2N}}\cap A_2=\emptyset)=\pr(\xi^{A_2,N}_{2a_{2N}}= \emptyset)\leq \pr(\xi^{\textbf{1},l_N}_{2a_{2N}}= \emptyset).
$$
From  item $(iii)$ of Proposition \ref{Prop0} and the fact that $l_N$ is linear in $N$,  it follows that
$$
\underset{N\rightarrow \infty}{\lim}\pr(\xi^{\textbf{1},l_N}_{2a_{2N}}=\emptyset)=\underset{N\rightarrow \infty}{\lim}\pr(T^{\textbf{1}}_{l_N}\leq 2 a_N)=0.
$$
Thus, for $N$ large enough we have 
$$
\pr(T^{\xi_0}_N>2a_{2N};\xi^{N}_{2a_{2N}}\cap A_2=\emptyset)\leq \beta.$$
Moreover, by \eqref{Delta} and \eqref{Claim1Eq2}, we obtain
$$
\pr(\xi^{\xi_0,N}_{2a_{2N}}\cap A_2\neq \emptyset)\geq \beta,
$$
for all $\xi_0 \in \{0,1\}^{[1,N]}$, $\xi_0 \neq \emptyset$. Thus, by the strong Markov property we obtain the claim.
\end{proof}
Now, we return to the first term in \eqref{Prop 2 eq4}. Since $S^{\zeta_0}_{2kN-1}$ is larger than $2(kN-2)a_{2N}$, given the information until this time,  the event $\{S_{2kN-1}\leq s_k -a_{2N};C_k\}$ involves information between the times $(2kN-2)a_{2N}$ and $(2(k+1)N-2)a_{2N}$. Therefore, by the strong Markov property  and Claim \ref{claim2} we conclude that
\begin{align*}
 &\pr(C_j|\{S^{\zeta_0}_{2jN-1}\leq s_j-a_{2N}\}\cap\underset{1\leq k\leq j-1}{\cap}\{ S^{\zeta_0}_{2kN-1}\leq s_k-a_{2N};C_k\} )\\&=\pr(C_j|\{S^{\zeta_0}_{2jN-1}\leq s_j-a_{2N}\})\leq 1-\beta.
\end{align*} Thus, for all $j\geq1$ we have that
\begin{align}\label{Prop 2 eq 5*}
\begin{split}
&\pr(\underset{1\leq k\leq j}{\cap}\{ S^{\zeta_0}_{2kN-1}\leq s_k-a_{2N};C_k\})\\&
\hspace{3cm}\leq{(1-\beta)}\pr(\underset{1\leq k\leq j-1}{\cap}\{ S^{\zeta_0}_{2kN-1}\leq s_k-a_{2N};C_k\}).
\end{split}
\end{align}
Then, using \eqref{Prop 2 eq 5*} recursively  we obtain that
\begin{equation}\label{Prop 2 eq5**}
\pr(\underset{1\leq k\leq N}{\cap}\{ S^{\zeta_0}_{2kN-1}\leq s_k-a_{2N};C_k\})\leq (1-\beta)^{N}.
\end{equation}

Next, we analyze  the second term in  \eqref{Prop 2 eq4}. From  the fact that  $S$ is of order $N^2$  and $a_{2N}$ is of order $N^3$, we get that for $N$ large enough it holds
\begin{align*}
s_{k}-2a_{2N}&=(2kN-2)a_{2N}\leq s_k+S\leq(2(k+1)N-2)a_{2N}=s_{k+1}-a_{2N}.
\end{align*}
From these relations, we have that the $k$-th event in the intersection inside the probability in the second term of \eqref{Prop 2 eq4} involves information within the interval of time $[s_k-a_{2N},s_{k+1}-a_{2N}]$.  Hence, the  Markov property and Proposition \ref{N-barrier} imply that this probability is less than $(1-\hat{\eta})^{N}$. Thus, combining this last comment with \eqref{Prop 2 eq5**} we obtain the desired bound for \eqref{Prop 2 eq4}, specifically
\begin{equation}\label{Prop 2 eqL}
I\leq(1-\beta)^{N}+(1-\hat{\eta})^{N}.
\end{equation}

Finally,  we combine the inequality in \eqref{Prop 2 eqL} with  \eqref{Prop 2 eq4*}, \eqref{Prop 2 eq*},  \eqref{Prop 2 eq3} and select  N large enough such that
 $$4\left(2N^2c^{2N}+ (1-\beta)^{N} + (1-\hat{\eta})^{N}\right)\leq  (2 \max\{c^2, \beta, 1-\hat{\eta}\})^{N},$$ 
to obtain \eqref{Prop 2 eq2} for $\nu= 2 \max\{c^2,\beta, 1-\hat{\eta}\}$. Therefore, item $(i)$ is proved.

Item $(ii)$ follows immediately from the choice of $c_N$ and $d_N$.
\end{proof}


\begin{proof}[Proof of Theorem \ref{TeoMetaD=1R>1}]
Let $\beta_N$ as in the statement of  Theorem $1$. We will prove that
\begin{equation}\label{Teo1Eq*}
\underset{N\rightarrow \infty}{\lim}\left|\pr(\tau^{\textbf{1},\textbf{2}}_N>\beta_N(t+s))-\pr(\tau^{\textbf{1},\textbf{2}}_{N}>\beta_N t)\pr(\tau^{\textbf{1},\textbf{2}}_{N}>\beta_N s) \right|=0,
\end{equation}
which by the definition of $\beta_{N}$ will imply
$$\underset{N\rightarrow \infty}{\lim}\pr(\tau^{\textbf{1},\textbf{2}}_N\geq\beta_N t)=e^{-t}.$$

 To obtain the limit \eqref{Teo1Eq*}, we  prove that there exist two positive sequences $h_N$ and $h'_N$, both converging to zero when $N$ goes to infinity, such that 
\begin{equation}\label{Teo1Eq**}
\pr(\tau^{\textbf{1},\textbf{2}}_{N}>\beta_N t)\pr(\tau^{\textbf{1},\textbf{2}}_{N}>\beta_N s)-h_N\leq \pr(\tau^{\textbf{1},\textbf{2}}_N>\beta_N(t+s))
\end{equation}
and
 \begin{equation}\label{Teo1Eq***}
\pr(\tau^{\textbf{1},\textbf{2}}_N>\beta_N(t+s))\leq \pr(\tau^{\textbf{1},\textbf{2}}_{N}>\beta_N t)\pr(\tau^{\textbf{1},\textbf{2}}_{N}>\beta_N s)+h'_N.
\end{equation}
We begin by proving equation \eqref{Teo1Eq**}.  First, we observe that for all  positive $t$ and $s$  we have that
\begin{align}\label{Teo1Eq1}
\begin{split}
&\{\tau^{\textbf{1},\textbf{2}}_{N}>\beta_{N}s;\tau^{\textbf{1},\textbf{2},\beta_N s}_{N}>\beta_Nt;\zeta^{\textbf{1},\textbf{2},N}_{\beta_{N}s}=\zeta^{\textbf{1},\textbf{2},N}_{\beta_{N}s-c_{N},c_{N}};\zeta^{\textbf{1},\textbf{2},N}_{\beta_{N}s-c_{N},2c_{N}}=\zeta^{\textbf{1},\textbf{2},N}_{\beta_N s, c_{N}}\} \\
&\subset \{\tau^{\textbf{1},\textbf{2}}_N>\beta_N(t+s)\},
\end{split}
\end{align}
where $\zeta^{\textbf{1},\textbf{2},N}_{t,\cdot}$ and $\tau^{\textbf{1},\textbf{2},t}_{N}$ refer to the two-type  contact process defined in the restriction of the Harris construction to  $\mathbb{Z}\times[t,\infty)$. In the case $t=0$, $\zeta^{\textbf{1},\textbf{2},N}_{0,\cdot}$ is the classical contact process and we omit the subscript $0$. Observe that $\beta_N s-c_N\geq 0$ for all $s>0$ since by the definition of $\beta_N$ and $c_N$ and items $(ii)$ and $(iii)$ of Proposition \ref{Prop2} we have that $\beta_N /c_N$ converges to infinity  as $N$ goes to infinity. By formula \eqref{Teo1Eq1},  we obtain that
\begin{figure}
	\centering
		  \begin{overpic}[scale=0.4,unit=1mm]{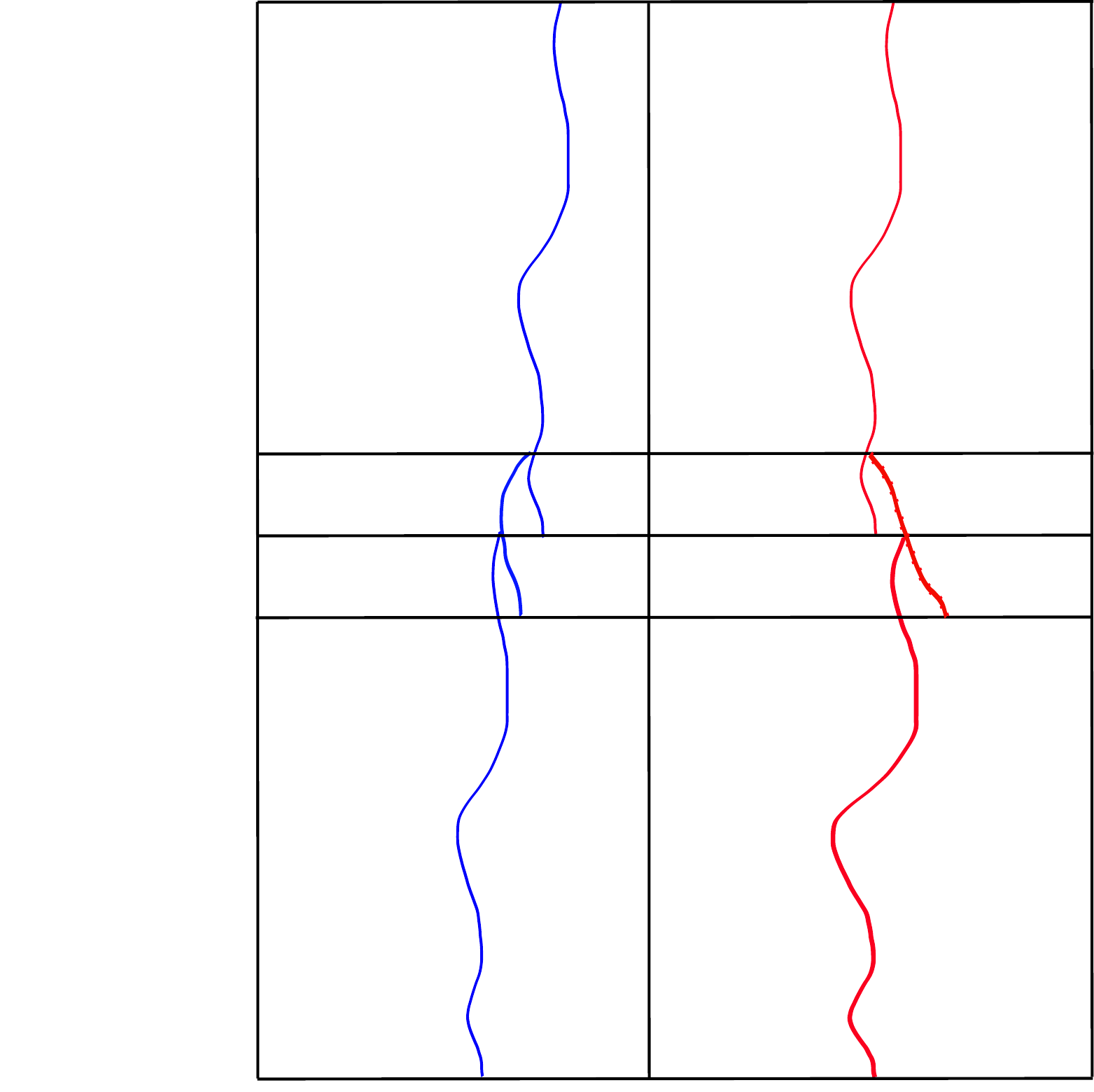}
		  \put(1,62){\parbox{0.4\linewidth}{%
  \begin{tiny}
  $\beta_N(t+s)$
  \end{tiny}}}
  \put(1,31){\parbox{0.4\linewidth}{%
  \begin{tiny}
  $\beta_N s$
  \end{tiny}}}
  \put(1,26){\parbox{0.4\linewidth}{%
  \begin{tiny}
  $\beta_N s-c_N$
  \end{tiny}}}
  \put(1,36){\parbox{0.4\linewidth}{%
  \begin{tiny}
  $\beta_N s+c_N$
  \end{tiny}}}
\end{overpic}
   \caption{A graphic representation of the inclusion \eqref{Teo1Eq1}. Blue paths represent paths of particles $1$ and red paths represent particles of type $2$.}\label{FigureMetastability}
\end{figure}
\begin{align}\label{Teo1Eq2}
\begin{split}
&\pr(\tau^{\textbf{1},\textbf{2}}_{N}>\beta_{N}s;\tau^{\textbf{1},\textbf{2},\beta_N s}_{N}>\beta_Nt;\zeta^{\textbf{1},\textbf{2},N}_{\beta_{N}s}=\zeta^{\textbf{1},\textbf{2},N}_{\beta_{N}s-c_{N},c_{N}};\zeta^{\textbf{1},\textbf{2},N}_{\beta_{N}s-c_{N},2c_{N}}=\zeta^{\textbf{1},\textbf{2},N}_{\beta_N s, c_{N}})\\&\leq \pr(\tau^{\textbf{1},\textbf{2}}_N>\beta_N(t+s)).
\end{split}
\end{align}
Now, we  choose $h_N$ as
\begin{align*}
&h_N=\pr(\tau^{\textbf{1},\textbf{2}}_{N}>\beta_{N}s)\pr(\tau^{\textbf{1},\textbf{2}}_{N}>\beta_Nt)\\&\hspace{1cm}-\pr(\tau^{\textbf{1},\textbf{2}}_{N}>\beta_{N}s;\tau^{\textbf{1},\textbf{2},\beta_N s}_{N}>\beta_Nt;\zeta^{\textbf{1},\textbf{2},N}_{\beta_{N}s}=\zeta^{\textbf{1},\textbf{2},N}_{\beta_{N}s-c_{N},c_{N}};\zeta^{\textbf{1},\textbf{2},N}_{\beta_{N}s-c_{N},2c_{N}}=\zeta^{\textbf{1},\textbf{2},N}_{\beta_N s, c_{N}}).
\end{align*}
Also, we observe that the Markov property implies that
$$
\pr(\tau^{\textbf{1},\textbf{2}}_{N}>\beta_{N}s)\pr(\tau^{\textbf{1},\textbf{2}}_{N}>\beta_Nt)=\pr(\tau^{\textbf{1},\textbf{2}}_{N}>\beta_{N}s;\tau^{\textbf{1},\textbf{2};\beta_N s}_{N}>\beta_Nt),
$$
which gives
\begin{align}\label{Teo1Eq3}
\begin{split}
h_N&=\pr(\tau^{\textbf{1},\textbf{2}}_{N}>\beta_{N}s;\tau^{\textbf{1},\textbf{2};\beta_N s}_{N}>\beta_Nt;\zeta^{\textbf{1},\textbf{2},N}_{\beta_{N}s}\neq\zeta^{\textbf{1},\textbf{2},N}_{\beta_{N}s-c_{N},c_{N}}\text{ or }\zeta^{\textbf{1},\textbf{2},N}_{\beta_{N}s-c_{N},2c_{N}}\neq\zeta^{\textbf{1},\textbf{2},N}_{\beta_N s, c_{N}})\\&\leq \pr(\tau^{\textbf{1},\textbf{2}}_{N}>\beta_{N}s;\zeta^{\textbf{1},\textbf{2},N}_{\beta_{N}s}\neq\zeta^{\textbf{1},\textbf{2},N}_{\beta_{N}s-c_{N},c_{N}})+\pr(\zeta^{\textbf{1},\textbf{2},N}_{\beta_{N}s-c_{N},2c_{N}}\neq\zeta^{\textbf{1},\textbf{2},N}_{\beta_N s, c_{N}}).
\end{split}
\end{align}
Observe that by the Markov property we have that 
$$
\pr(\tau^{\textbf{1},\textbf{2}}_{N}>\beta_{N}s;\zeta^{\textbf{1},\textbf{2},N}_{\beta_{N}s}\neq \zeta^{\textbf{1},\textbf{2},N}_{\beta_{N}s-c_{N},c_{N}})\leq \underset{\zeta_0\in \mathcal{C}}{\sup}\,\pr^{\zeta_0}(\tau^{\zeta_0}_N>c_N;\zeta^{\zeta_0}_{c_N}\neq\zeta^{\textbf{1},\textbf{2},N}_{c_N})
$$ and
$$\pr(\tau^{\textbf{1},\textbf{2},\beta_N s-c_N}_N\geq 2 c_N;
\zeta^{\textbf{1},\textbf{2},N}_{\beta_{N}s-c_{N},2c_{N}}\neq\zeta^{\textbf{1},\textbf{2},N}_{\beta_N s, c_{N}})
\leq \underset{\zeta_0\in \mathcal{C}}{\sup}\,\pr^{\zeta_0}(\tau^{\zeta_0}_N>c_N;\zeta^{\zeta_0}_{c_N}\neq\zeta^{\textbf{1},\textbf{2},N}_{c_N}).
$$
Thus, in \eqref{Teo1Eq3} we have that 
$$
h_N\leq 2\nu^{N}+\pr(\tau^{\textbf{1},\textbf{2}}_{N}\leq2c_{N}).
$$
Therefore, $h_N$ converges to zero when $N$ goes to infinity. From this we deduce \eqref{Teo1Eq**}.

Now, to prove \eqref{Teo1Eq***} we observe that by the Markov property and \eqref{Eqnu} we have that
\begin{align*}
&\pr(\tau^{\textbf{1},\textbf{2}}_N>\beta_N (t+s))\\
&\leq\pr(\tau^{\textbf{1},\textbf{2}}_{N}>\beta_N t)\pr(\tau^{\textbf{1},\textbf{2}}_{N}>\beta_N s)+\underset{\zeta_0\in\mathcal{C}}{\sup\,}\pr(\zeta^{\textbf{1},\textbf{2},N}_{\beta_Nt}\neq\zeta^{\zeta_0,N}_{\beta_N t} ; \tau^{\zeta_0}_{N}>\beta_Nt)\pr(\tau^{\textbf{1},\textbf{2}}_{N}>\beta_N s)\\
&\leq\pr(\tau^{\textbf{1},\textbf{2}}_{N}>\beta_N t)\pr(\tau^{\textbf{1},\textbf{2}}_{N}>\beta_N s)+ \nu^N.
\end{align*}
Thus, we can take $h_N'=\nu^{N}$, and the proof is complete.
\end{proof}

\section{Proof of Theorem \ref{Theorem1}}\label{Section6}
In this section, we prove Theorem \ref{Theorem1},  which states  the asymptotic behavior of $\{\log\tau^{\textbf{1},\textbf{2}}_N\}_{N}$.  
Before the proof of the theorem, we present two technical results.   Proposition below is a modification of Proposition \ref{Prop2} which is suitable for our purpose.
\begin{proposition}\label{Prop3}There exists $0<c<1$ such that for every $K$
 \begin{equation}\label{Prop3 eq0} 
   \underset{\zeta_0 \in \mathcal{C}}{\sup\,}  \pr(\tau^{\zeta_0} _N>2N^2K a_{2N}; \nexists\, t\leq 2N^2K a_{2N}: \{x: \zeta^{\zeta_0,N}_{t}(x)=2\}\subset [1,N])\leq c^{KN}
 \end{equation}
 for $N$ large enough.
 \begin{proof}
 Let $s_k=k2N K a_{2N}$ for $1\leq k \leq N$. We observe that the same argument used for  the inclusion \eqref{Prop 2 eq1} leads to
 \begin{align}\label{Prop 3 eq4}
 \begin{split}
 &\left \lbrace \begin{array}{l}\tau^{\zeta_0}_N>2N^2K a_{2N};  \exists\hspace{0.1cm} y_{k} \in A_2, z_k \in A_4 : \zeta^{\zeta_0,N}_{s_{k}}(y_k)=2\\\zeta^{\zeta_0,N}_{s_{k}}(z_k)=1
 \text{ and } (y_k,s_k), (z_k,s_k) \text{ are an }N\text{-\emph{barrier}}\end{array}\right\rbrace
 \\&\hspace{5cm} \subset\{\{x: \zeta^{\zeta_0,N}_{s_k+S}(x)=2\}\subset [1,N]\}.
\end{split} \end{align}
To see this, first we fix a configuration in the event on the left member of \eqref{Prop 3 eq4}. Now, since $(z_k,s_k)$ is an $N$-\emph{barrier}, we have that if $\zeta^{\zeta_0,N}_{s_k+S}(x)\neq 0$ for a site $x\in[-N+1,0]$, then  $(x,s_k+S)$ is connected with $(z_k,s_k)$ inside $[-N+1,0]$ and by the priority of the particles of type $1$ in $[-N+1,0]\times[0,\infty)$, we have that $\zeta^{\zeta_0,N}_{s_k+S}(x)=1$. By  the same reasoning, we have that if  $x'\in[1,N]$ and $\zeta^{\zeta_0,N}_{s_k+S}(x')\neq 0$,  then $(x',s_k+S)$ is connected with $(y_k,s_k)$ inside $[1,N]$ and by the priority of the particles of type $2$ in $[1,N]\times[0,\infty)$, it holds that $\zeta^{\zeta_0,N}_{s_k+S}(x)=2$. Summing up, at time $s_k+S$ every site occupied in $[-N+1,0]$ is occupied by a particle of type $1$ and every site occupied in $[1,N]$ is occupied by a particle of type $2$, which yields \eqref{Prop 3 eq4}.

Now, we observe that \eqref{Prop 3 eq4} implies
\begin{align}\label{Prop 3 eq4*}
 \begin{split}
 &\underset{1\leq k \leq N}{\bigcup}\left \lbrace \begin{array}{l}\tau^{\zeta_0}_N>2N^2K a_{2N};  \exists\hspace{0.1cm} y_{k} \in A_2, z_k \in A_4 : \zeta^{\zeta_0,N}_{s_{k}}(y_k)=2\\\zeta^{\zeta_0,N}_{s_{k}}(z_k)=1
 \text{ and } (y_k,s_k), (z_k,s_k) \text{ are an }N\text{-\emph{barrier}}\end{array}\right\rbrace
 \\&\hspace{5cm} \subset\underset{0\leq t\leq 2N^2K a_{2N}}{\bigcup}\{\{x: \zeta^{\zeta_0,N}_{t}(x)=2\}\subset [1,N]\}.
\end{split} \end{align}
Therefore, to conclude \eqref{Prop3 eq0} it is enough to prove 
\begin{equation}\label{Prop 3 eq5}
\underset{\zeta_0 \in \mathcal{C}}{\sup\,} \pr\left(\tau^{\zeta_0} _N>2N^2K a_{2N}\cap \bigcap_{k=1}^{KN}\left \lbrace\begin{array}{l}   \forall y \in A_2 \hspace{0.1cm}\zeta^{\zeta_0,N}_ {s_k}(y)\neq2 \text{ or }(y,s_k) \\\text{ is not an } N\text{-\emph{barrier}}\end{array}\right \rbrace\right)\leq c^{KN}.
\end{equation}
We observe that the left member in \eqref{Prop 3 eq5} is  the same as the left member of \eqref{Prop 2 eq4*}, with the only difference that in this case we are intersecting $KN$ events instead of $N$. Thus, the same procedure used to get the bound  $c^N$ for the left member of \eqref{Prop 2 eq4*}  can be applied to obtain \eqref{Prop 3 eq5} (see Proposition \ref{Prop2}).
 \end{proof}
\end{proposition}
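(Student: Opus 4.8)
The plan is to reproduce the mechanism of Proposition~\ref{Prop2} over $KN$ time blocks instead of $N$. The deterministic core is a separation observation: if at some time $s$ there are $y\in A_2$ and $z\in A_4$ with $\zeta^{\zeta_0,N}_s(y)=2$, $\zeta^{\zeta_0,N}_s(z)=1$, and both $(y,s)$ and $(z,s)$ are $N$-\emph{barriers}, then at time $s+S$ every occupied site of $[1,N]$ carries a type-$2$ particle and every occupied site of $[-N+1,0]$ carries a type-$1$ particle. This follows by combining the defining property of the two barriers with the priority rules and Lemma~\ref{PathsLemma}: an occupied site of $[1,N]$ at time $s+S$ is joined inside $[1,N]$ to $(y,s)$ along a path that can be taken to carry a type-$2$ particle throughout, and since type $2$ has priority in $[1,N]\times[0,\infty)$ it is therefore type $2$; symmetrically for $[-N+1,0]$ and type $1$. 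In particular $\{x:\zeta^{\zeta_0,N}_{s+S}(x)=2\}\subset[1,N]$. This is the inclusion \eqref{Prop 3 eq4}, and it says that the occurrence of such a double barrier at any of the discretization times below rules out the event in \eqref{Prop3 eq0}.

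I would then discretize by setting $s_k=2kN\,a_{2N}$ for $1\le k\le KN$, so that $s_{KN}=2N^2K\,a_{2N}$; each of the $KN$ blocks has length $2N\,a_{2N}$, which for $N$ large comfortably exceeds $S+2a_{2N}$ (recall $S$ is of order $N^2$ while $a_{2N}$ is of order $N^3$), and consecutive blocks do not overlap, so events associated with distinct $k$ decouple under the (strong) Markov property exactly as in Proposition~\ref{Prop2}. By the separation observation, the event in \eqref{Prop3 eq0} is contained, up to the survival constraint $\{\tau^{\zeta_0}_N>2N^2K a_{2N}\}$, in the intersection over $k\in\{1,\dots,KN\}$ of the events that no $y\in A_2$ with $\zeta^{\zeta_0,N}_{s_k}(y)=2$ is an $N$-barrier; by the symmetry of the Harris construction it is enough to bound the probability of this single intersection and double it. This is the reduction \eqref{Prop 3 eq4*}--\eqref{Prop 3 eq5}.

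It remains to rerun the recursive estimate of Proposition~\ref{Prop2} with $KN$ factors. The three inputs are reused unchanged: Proposition~\ref{Prop1}, applied blockwise with index $i=2kN-1$, shows that on survival a type-$2$ particle has entered $[1,N]$ by time $s_k-a_{2N}$ in every block outside an event of probability at most a polynomial in $KN$ times $(c')^{2N}$, where $0<c'<1$ is the constant of Proposition~\ref{Prop1}; Claim~\ref{claim2} then gives that such a particle reaches $A_2$ within the next $2a_{2N}$ with conditional probability at least $\beta$; and Proposition~\ref{N-barrier} gives that a type-$2$ particle occupying $A_2$ at time $s_k$ sits on an $N$-barrier with conditional probability at least $\hat\eta$. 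Chaining these three over the $KN$ blocks by the Markov property yields, as in \eqref{Prop 2 eqL}, a bound of the shape $C\big((1-\beta)^{KN}+(1-\hat\eta)^{KN}\big)$ plus the blockwise Proposition~\ref{Prop1} error, and one then absorbs everything into $c^{KN}$ for a suitable $0<c<1$. The point requiring care, and the main obstacle, is exactly this absorption: unlike the $(1-\beta)^{KN}$ and $(1-\hat\eta)^{KN}$ terms, the Proposition~\ref{Prop1} contribution is only polynomial in $KN$ times $(c')^{2N}$ rather than exponentially small in $KN$, so one must take $c$ close enough to $1$ for it to be dominated (this is permitted, since only $c<1$ is required, and for a fixed $K$ one may even let $c$ depend on $K$); apart from this accounting the argument is identical to that of Proposition~\ref{Prop2}.
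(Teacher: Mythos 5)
Your approach matches the paper's: the same deterministic separation observation via the two $N$-barriers and priorities, the same reduction to \eqref{Prop 3 eq5}, and the same plan of rerunning the recursion of Proposition~\ref{Prop2} over more time blocks. You also use the correct discretization $s_k=2kNa_{2N}$ for $1\le k\le KN$, which is what makes the count of $KN$ blocks in \eqref{Prop 3 eq5} consistent (the paper's written $s_k=2kNKa_{2N}$ for $1\le k\le N$ is evidently a slip).

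There is, however, a genuine gap in the final absorption, which you yourself half-flag but do not close. As you observe, the Proposition~\ref{Prop1} contribution, obtained via a union bound over the (order $KN$) blocks, is of size $\mathrm{poly}(KN)\,(c')^{2N}$, which is not exponentially small in $KN$. To dominate it by $c^{KN}$ one needs $c\geq (c')^{2/K}$ up to lower-order corrections, and the right-hand side tends to $1$ as $K\to\infty$; hence no single $c<1$ works for all $K$ within the additive bookkeeping you (and Proposition~\ref{Prop2}) use. Your proposed remedy, to let $c$ depend on $K$, does not establish the proposition as stated (the quantifier order is ``there exists $c<1$ such that for every $K$''), and it is not a harmless weakening: in the application to Theorem~\ref{Theorem1}, the integer $K^*=\lceil (2c_\infty+\epsilon)/\log(1/c)\rceil$ is computed from $c$, so a $K$-dependent $c$ creates a circularity between $c$ and $K^*$ which need not resolve. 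To obtain a $K$-uniform $c$ one should not peel off the Proposition~\ref{Prop1} error additively as a single union bound; instead fold it into the per-block conditional estimate, showing that for every $\zeta'\in\mathcal{C}$ the conditional probability, given the state $\zeta'$ at time $s_{k-1}$, of being alive at time $s_k$ with no type-$2$ $N$-barrier in $A_2$ is at most $(c')^{2N}+(1-\beta\hat\eta)\le\rho<1$ for $N$ large, with $\rho$ not depending on $K$; iterating by the Markov property over the $KN$ blocks then gives $\rho^{KN}$. That reorganization is what the brief ``same procedure'' of the paper has to mean for the statement to hold with a $K$-uniform $c$, and it is the step your write-up is missing.
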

In the  next lemma, we use the following limit
\begin{equation}\label{Fact(ii)}
\underset{N\rightarrow \infty}{\lim}\frac{1}{N}\log(\pr(T^{[1,N]}<\infty))=-c_{\infty},
\end{equation}where $c_{\infty}$ is as in Remark \ref{Remark1} and $T^{[1,N]}$ is defined in \eqref{timext}.
This result is proved for $R=1$ in Lemma $3$ of \cite{Schonmman-Durrett}. Since  every step of this proof  can be applied for the case $R>1$, we assume \eqref{Fact(ii)} without proving it.
\begin{lemma}\label{LemmaMountford2} There exists $\theta>0$ such that
$$
\underset{N\rightarrow \infty}{\liminf}\frac{1}{N}\log(\pr(T^{[ 1,N ]}< \theta N))\geq -c_{\infty}.
$$
\end{lemma}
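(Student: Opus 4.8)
The plan is to obtain Lemma~\ref{LemmaMountford2} directly from the two facts just recalled: the uniform-in-$N$ exponential tail bound \eqref{Fact(i)Durrett} and the limit \eqref{Fact(ii)}. The point is that \eqref{Fact(ii)} gives $\pr(T^{[1,N]}<\infty)=e^{-(c_\infty+o(1))N}$, whereas \eqref{Fact(i)Durrett}, applied at a time $t$ proportional to $N$, shows that the probability of surviving that long and \emph{then} becoming extinct is exponentially smaller than $e^{-c_\infty N}$ as soon as the proportionality constant is chosen large enough. Thus essentially all of the mass of $\{T^{[1,N]}<\infty\}$ already sits inside $\{T^{[1,N]}<\theta N\}$, which is exactly what the lemma asserts.

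Concretely, I would first fix $\theta:=2(c_\infty+1)/\hat{c}$, where $\hat{c}>0$ is the constant from \eqref{Fact(i)Durrett}; this $\theta$ depends only on $\lambda$ and $R$, and $\hat{c}\theta/2=c_\infty+1>c_\infty$. Using the disjoint decomposition $\{T^{[1,N]}<\infty\}=\{T^{[1,N]}<\theta N\}\cup\{\theta N\le T^{[1,N]}<\infty\}$, the inclusion $\{\theta N\le T^{[1,N]}<\infty\}\subset\{\tfrac{\theta N}{2}<T^{[1,N]}<\infty\}$, and \eqref{Fact(i)Durrett} at $t=\theta N/2$, one gets
$$\pr(T^{[1,N]}<\theta N)=\pr(T^{[1,N]}<\infty)-\pr(\theta N\le T^{[1,N]}<\infty)\ge\pr(T^{[1,N]}<\infty)-e^{-(c_\infty+1)N}.$$
Given any $\epsilon\in(0,1)$, by \eqref{Fact(ii)} there is $N_0$ such that $\pr(T^{[1,N]}<\infty)\ge e^{-(c_\infty+\epsilon)N}$ for all $N\ge N_0$, and therefore, for $N$ large,
$$\pr(T^{[1,N]}<\theta N)\ge e^{-(c_\infty+\epsilon)N}-e^{-(c_\infty+1)N}=e^{-(c_\infty+\epsilon)N}\bigl(1-e^{-(1-\epsilon)N}\bigr)\ge\tfrac12\,e^{-(c_\infty+\epsilon)N}.$$
Taking $\tfrac1N\log$ and then the $\liminf$ in $N$ gives $\liminf_{N\to\infty}\tfrac1N\log\pr(T^{[1,N]}<\theta N)\ge-(c_\infty+\epsilon)$; since $\epsilon\in(0,1)$ is arbitrary, letting $\epsilon\downarrow0$ proves the lemma.

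There is no genuinely hard step here — the argument is a short soft estimate. The only point requiring care is the order of the quantifiers: the constant $\theta$ must be chosen once and for all, large enough that $\hat{c}\theta>c_\infty$, and only afterwards is $\epsilon$ sent to $0$. This is legitimate precisely because the rate $\hat{c}$ in \eqref{Fact(i)Durrett} is independent of $N$ (and of $\epsilon$), which for $R>1$ in turn rests on the Peierls-type contour argument via the Mountford--Sweet renormalization indicated in Remark~\ref{Remark1}.
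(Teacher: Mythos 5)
Your proof is correct and follows essentially the same route as the paper: decompose $\{T^{[1,N]}<\infty\}$ into $\{T^{[1,N]}<\theta N\}$ and the tail, kill the tail with \eqref{Fact(i)Durrett} once $\theta$ is large enough relative to $c_\infty/\hat c$, and feed in the lower bound $\pr(T^{[1,N]}<\infty)\geq e^{-(c_\infty+\epsilon)N}$ coming from \eqref{Fact(ii)}. The only cosmetic differences are that you pin down a concrete $\theta=2(c_\infty+1)/\hat c$ and halve the time to sidestep the strict-versus-non-strict inequality, neither of which changes the argument.
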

\begin{proof}

Observe that for any $\theta>0$
$$
\pr(T^{[1,N]}< \infty)=\pr(T^{[1,N]}< \theta N)+\pr(\theta N<T^{[1,N]}< \infty).
$$
Using \eqref{Fact(i)Durrett} for $t=\theta N$ we have
\begin{equation}\label{Teo 1 Eq 1}
\pr(T^{[1,N]}< \infty)\leq  e^{-\theta N \Hat{c}} + \pr(T^{[1,N]}<\theta N).
\end{equation}
By \eqref{Fact(ii)} and \eqref{Teo 1 Eq 1}, for all $\epsilon>0$  there exists an $n$ such that for all $N>n$
$$
e^{-(c_{\infty}+\epsilon)N}-e^{-\theta N \Hat{c}} \leq  \pr(T^{[1,N]}<\theta N),
$$
which implies
$$
-(c_{\infty}+\epsilon)N+\log(1+e^{-(\theta \Hat{c}-c_{\infty}-\epsilon)N})\leq \log \pr(T^{[1,N]}<\theta N).
$$
Taking $\theta>c_{\infty}/\Hat{c}$, for every $\epsilon>0$ we have 
$$
-(c_{\infty}+\epsilon)\leq \underset{N\rightarrow \infty}{\liminf}{\frac{1}{N}\log \pr(T^{[1,N]}<\theta N)}.
$$
\end{proof}
\begin{proof}[Proof of Theorem \ref{Theorem1}] First, for a fixed but arbitrary $\epsilon>0$  we will prove that 
\begin{equation}\label{Teo 1 Eq 2}
\underset{N\rightarrow \infty}{\lim}\pr(\tau^{\textbf{1},\textbf{2}}_N >k_{N}e^{(c_{\infty}+2\epsilon)N})=0,
\end{equation}
where $k_{N}=2NK^{*}a_{2N}+\theta N$,   $K^{*}=\left \lceil\frac{2c_{\infty}+  \epsilon}{\log(1/c)} \right\rceil$,  $a_{2N}$ is  as in Proposition \ref{Prop0}, $\theta$ is as in Lemma \ref{LemmaMountford2} and $c$ is as in Proposition \ref{Prop3}.

To do this, we observe that by the Markov property, for every $n \in \mathbb{N}$  it holds that
\begin{equation}\label{Teo 1 Eq 3}
\pr(\tau^{\textbf{1},\textbf{2}}_{N}>n k_{N})
\leq (\underset{\zeta_0 \in \mathcal{C}}{\sup\, }\pr(\tau^{\zeta_0}_{N}>k_{N}))^{n}.
\end{equation}
Now, we observe that by   \eqref{Prop3 eq0} for every  $\zeta_0 \in \mathcal{C}$ we have that
\begin{align}\label{Teo 1 Eq 4}\begin{split}
&\pr(\tau^{\zeta_0}_{N}>k_{N})\leq c^{K^{*}N}+\\&\hspace{2.5cm}+\pr(\tau^{\zeta_0}_{N}>k_{N};\exists \hspace{0.1cm} t\leq k_{N}-\theta N : \{x: \zeta^{\zeta_0,N}_{t}(x)=2\}\subset [1,N]).
\end{split}
\end{align}
To deal with the probability in the right term of \eqref{Teo 1 Eq 4}, we define the following stopping time
$$
S^*=\inf\{t:\{x:\zeta^{\zeta_0,N}_{t}(x)=2\}\subset [1,N]\}.
$$
Using the strong Markov property for this stopping time and the atractiveness of the classical contact process we have that for $N$ large enough 
\begin{align}\label{Teo 1 Eq 6}
\begin{split}
&\pr(\tau^{\zeta_0}_{N}>k_{N};\exists \hspace{0.1cm} t\leq k_{N}-\theta N : \{x: \zeta^{\zeta_0,N}_{t}(x)=2\}\subset [1,N])\\
&=\pr(\tau^{\zeta_0}_{N}>k_{N};S^*\leq k_{N}-\theta N)\\
&\leq \pr(T^{[1,N]}>\theta N)\leq 1-e^{-(c_{\infty}+\epsilon)N},
\end{split}
\end{align}
where the last inequality use Lemma \ref{LemmaMountford2}.
 Substituting  \eqref{Teo 1 Eq 6} into \eqref{Teo 1 Eq 4} we obtain that
\begin{align}\label{Teo 1 Eq 6*}
\begin{split}
\underset{\zeta_0 \in \mathcal{C}}{\sup \,}\pr(\tau^{\zeta_0}_{N}>k_{N})&\leq c^{K^{*}N}+  1-e^{-(c_{\infty}+ \epsilon)N}\\
&\leq e^{-(2c_{\infty}+2\epsilon)N}+1-e^{-(c_{\infty}+ \epsilon)N}.
\end{split}
\end{align}
Thus, by \eqref{Teo 1 Eq 3} and \eqref{Teo 1 Eq 6*}, for $N^{*}=\left \lfloor e^{(c_{\infty}+2\epsilon)N} \right\rfloor$ we have
\begin{align}\label{Teo 1 Eq 10*}
\begin{split}
\pr(\tau^{\textbf{1},\textbf{2}}_{N}>k_{N}e^{(c_{\infty}+2\epsilon)N})&\leq\pr(\tau^{\textbf{1},\textbf{2}}_{N}>k_{N}N^{*})\\
&\leq (1-e^{-(c_{\infty}+\epsilon)N}+e^{-(2c_{\infty}+2\epsilon)N} )^{N^{*}}. 
\end{split}
\end{align}
By our choice of $N^*$ we have that the right member of \eqref{Teo 1 Eq 10*} converges to zero when $N$ goes to infinity.
Thus, we have proved \eqref{Teo 1 Eq 2}. Now, observe that \eqref{Teo 1 Eq 2} implies 
\begin{equation}\label{Teo 1 Eq 9*}
\underset{N\rightarrow\infty}{\lim}\pr\left(\frac{1}{N}\log(\tau^{\textbf{1},\textbf{2}}_N)>c_{\infty}+3\epsilon\right)=0.
\end{equation}

Therefore, to conclude the proof of the theorem we only need to state that for every $\epsilon>0$
\begin{equation}\label{Teo 1 Eq 8*}
\underset{N\rightarrow\infty}{\lim}\pr\left(\frac{1}{N}\log(\tau^{\textbf{1},\textbf{2}}_N)<c_{\infty}-\epsilon\right)=0.
\end{equation}
For this purpose, observe  that  $\tau^{\textbf{1},\textbf{2}}_{N}$ is stochastically larger than the minimum of two independent variables with the same law of  $T^{\textbf{1}}_N$. Then, we have that 
\begin{align}\label{Teo 1 Eq 7*}
\begin{split}
\pr\left(\frac{1}{N}\log(\tau^{\textbf{1},\textbf{2}}_N)<c_{\infty}-\epsilon\right)
&\leq\pr\left(\frac{1}{N}\log(\min\{T^{\textbf{1}}_N;\tilde{T}^{\textbf{1}}_N\})<c_{\infty}-\epsilon\right)\\
&=\pr(\min\{T^{\textbf{1}}_N;\tilde{T}^{\textbf{1}}_N\}<e^{(c_{\infty}-\epsilon)N})\\
&=\pr(T^{\textbf{1}}_N<e^{(c_{\infty}-\epsilon)N})^{2},
\end{split}
\end{align}
where $T^{\textbf{1}}_N$ and $\tilde{T}^{\textbf{1}}_N$ are i.i.d. By \eqref{Prop 0 eq4}, the limit of the last term in \eqref{Teo 1 Eq 7*} is zero, which implies \eqref{Teo 1 Eq 8*}.

Clearly, from \eqref{Teo 1 Eq 9*} and \eqref{Teo 1 Eq 8*} the theorem follows. 
\end{proof}
In the next remark, we discuss what happens after the first type of particle dies out.
During this remark, we denote by $\xi^{A}_{[-N+1,N]}(t)$ the classical contact process with initial configuration $A$ and $T^{A}_{[-N+1,N]}$ the time of extinction of this process. For the special case $A=[-N+1,N]$, we write  $\xi^{\textbf{1}}_{[-N+1,N]}(t)$ and $T^{\textbf{1}}_{[-N+1,N]}$.
\begin{remark}
Let $\tilde{T}^{\textbf{1}}_{2N}$  be the time of extinction of both particles, that is
$$
\tilde{T}^{\textbf{1}}_{2N}=\inf\{t>0: \zeta^{\textbf{1},\textbf{2},N}_t=\emptyset\}.
$$
If we ignore   the existence of both types of particles, the dynamic of the process is the same as the classical contact process. Therefore, $\tilde{T}^{\textbf{1}}_{2N}$ has the same distribution as $T^{\textbf{1}}_{[-N+1,N]}$ and, consequently, Remark \ref{Remark1} implies that for $\epsilon>0$ we have
 \begin{equation}\label{Eq1Rem3}
 \underset{N\rightarrow \infty}{\lim} \pr\left(e^{(c_{\infty}-\epsilon)2N}< \tilde{T}^{\textbf{1}}_{2N} <e^{(c_{\infty}+\epsilon)2N}\right)=1.
\end{equation}
 Moreover, observe that after $\tau^{\textbf{1}, \textbf{2}}_N$ the process behaves like the classical contact process, since after that time there is only one type of  particle. Observe also that combining \eqref{Eq1Rem3} with Theorem \ref{Theorem1} we obtain 
 \begin{equation}\label{Eq2Rem3}
  \underset{N\rightarrow \infty}{\lim} \pr\left(e^{(c_{\infty}-\epsilon)2N}-e^{(c_{\infty}+\epsilon)N}< \tilde{T}^{\textbf{1}}_{2N}-\tau^{\textbf{1}, \textbf{2}}_N\right)=1.
 \end{equation}

Furthermore, after the extinction of one of the types of particle, the surviving type behaves like the classical contact process.  Thus, $\tilde{T}^{\textbf{1}}_{2N}-\tau^{\textbf{1}, \textbf{2}}_N$ has the same law of $T^{A_N}_{[-N+1,N]}$, where $A_{N}=\zeta^{\textbf{1},\textbf{2},N}_{\tau^{\textbf{1},\textbf{2}}_N}$. Using \eqref{Prop0itemi}   we have that
 \begin{equation}\label{Eq2'Rem3}
 \underset{N\rightarrow \infty}{\lim}\pr(\xi^{A_N}_{a_{2N}}=\xi^{\textbf{1}}_{[-N+1,N]}(a_{2N});T^{A_N}_{[-N+1,N]}>a_{2N})=0
 \end{equation}
  and by \eqref{Eq2Rem3} and the limit \eqref{Eq2'Rem3}  we have that
  $$
 \underset{N\rightarrow \infty}{\lim}\pr(\xi^{A_N}_{a_{2N}}=\xi^{\textbf{1}}_{[-N+1,N]}(a_{2N}))=0.
 $$
  Therefore 
 $$
  \underset{N\rightarrow \infty}{\lim}\pr(T^{A_N}_{[-N+1,N]}=T^{\textbf{1}}_{[-N+1,N]})=1
 $$
 and by Remark \ref{Remark1} we obtain that 
$$\underset{N\rightarrow \infty}{\lim}\frac{1}{2N}\log(\tilde{T}^{\textbf{1}}_{2N}-\tau^{\textbf{1},\textbf{2}}_N)=\underset{N\rightarrow \infty}{\lim}\frac{1}{2N}\log T^{\textbf{1}}_{[-N+1,N]}=c_{\infty}\text{ in Probability}.$$ 
\end{remark}

\textbf{Acknowledgements:} This work is part of the author’s Ph.D. thesis, written  at UFRJ and supported by CAPES. The author thanks  Majela Pent\'on Machado for a careful reading of this work
and the many constructive suggestions which improved the exposition considerably. The author also thanks Edgar Matias da Silva for the helpful comments during the preparation of this paper. The author thanks Juan Carlos Salcedo Sora for his support and patience. The author also thanks to the two anonymous referees for their several helpful comments on this paper.
\bibliographystyle{acm}
\addcontentsline{toc}{part}{Bibliography}
\bibliography{reference}

\begin{thebibliography}{10}

\bibitem{Conos}
{\sc Andjel, E., Mountford, T., Pimentel, L. P.~R., and Valesin, D.}
\newblock Tightness for the interface of the one-dimensional contact process.
\newblock {\em Bernoulli 16}, 4 (2010), 909--925.

\bibitem{Bovier}
{\sc Bovier, A., and den Hollander, F.}
\newblock {\em Metastability}, vol.~351 of {\em Grundlehren der Mathematischen
  Wissenschaften [Fundamental Principles of Mathematical Sciences]}.
\newblock Springer, Cham, 2015.
\newblock A potential-theoretic approach.

\bibitem{Casandro-Vares-Oliveri}
{\sc Cassandro, M., Galves, A., Olivieri, E., and Vares, M.~E.}
\newblock Metastable behavior of stochastic dynamics: a pathwise approach.
\newblock {\em J. Statist. Phys. 35}, 5-6 (1984), 603--634.

\bibitem{Survey}
{\sc Durrett, R.}
\newblock Oriented percolation in two dimensions.
\newblock {\em Ann. Probab. 12}, 4 (1984), 999--1040.

\bibitem{Durrett-Griffeath}
{\sc Durrett, R., and Griffeath, D.}
\newblock Supercritical contact processes on {${\bf Z}$}.
\newblock {\em Ann. Probab. 11}, 1 (1983), 1--15.

\bibitem{Durrett-Liu}
{\sc Durrett, R., and Liu, X.~F.}
\newblock The contact process on a finite set.
\newblock {\em Ann. Probab. 16}, 3 (1988), 1158--1173.

\bibitem{Schonmman-Durrett}
{\sc Durrett, R., and Schonmann, R.~H.}
\newblock The contact process on a finite set. {II}.
\newblock {\em Ann. Probab. 16}, 4 (1988), 1570--1583.

\bibitem{GBT}
{\sc Durrett, R., and Swindle, G.}
\newblock Are there bushes in a forest?
\newblock {\em Stochastic Process. Appl. 37}, 1 (1991), 19--31.

\bibitem{Harris}
{\sc Harris, T.~E.}
\newblock Contact interactions on a lattice.
\newblock {\em Ann. Probability 2\/} (1974), 969--988.

\bibitem{Liggett}
{\sc Liggett, T.~M.}
\newblock {\em Interacting particle systems}, vol.~276 of {\em Grundlehren der
  Mathematischen Wissenschaften [Fundamental Principles of Mathematical
  Sciences]}.
\newblock Springer-Verlag, New York, 1985.

\bibitem{LiggetSchonmannStacey}
{\sc Liggett, T.~M., Schonmann, R.~H., and Stacey, A.~M.}
\newblock Domination by product measures.
\newblock {\em Ann. Probab. 25}, 1 (1997), 71--95.

\bibitem{Mountford}
{\sc Mountford, T.~S.}
\newblock A metastable result for the finite multidimensional contact process.
\newblock {\em Canad. Math. Bull. 36}, 2 (1993), 216--226.

\bibitem{Mountford2}
{\sc Mountford, T.~S.}
\newblock Existence of a constant for finite system extinction.
\newblock {\em J. Statist. Phys. 96}, 5-6 (1999), 1331--1341.

\bibitem{MountfordSweet}
{\sc Mountford, T.~S., and Sweet, T.~D.}
\newblock An extension of {K}uczek's argument to nonnearest neighbor contact
  processes.
\newblock {\em J. Theoret. Probab. 13}, 4 (2000), 1061--1081.

\bibitem{olivieri2005large}
{\sc Olivieri, E., and Vares, M.~E.}
\newblock {\em Large deviations and metastability}, vol.~100 of {\em
  Encyclopedia of Mathematics and its Applications}.
\newblock Cambridge University Press, Cambridge, 2005.

\bibitem{Schonmman}
{\sc Schonmann, R.~H.}
\newblock Metastability for the contact process.
\newblock {\em J. Statist. Phys. 41}, 3-4 (1985), 445--464.

\bibitem{Simonis}
{\sc Simonis, A.}
\newblock Metastability of the {$d$}-dimensional contact process.
\newblock {\em J. Statist. Phys. 83}, 5-6 (1996), 1225--1239.

\end{thebibliography}
Mariela Pent\'on Machado, Instituto de Matem\'atica. Universidade Federal do Rio de Janeiro, RJ, Brazil. Email: mariela@dme.ufrj.br.
\end{document}